\numberwithin{equation}{section}
\newtheorem{theorem}{Theorem}[section]
\newtheorem{lemma}[theorem]{Lemma}
\newtheorem{prop}[theorem]{Proposition}
\theoremstyle{definition}
\newtheorem{remark}[theorem]{Remark}
\newtheorem*{claim}{Claim}
\newtheorem{thm}{Theorem}
\newtheorem{coro}{Corollary}
\newcommand{\R}{\mathbb R}
\newcommand{\C}{\mathbb C}
\DeclareMathOperator{\Ima}{Im}
\DeclareMathOperator{\Rea}{Re}
\title[$L^2$-Scattering NLS with mixed nonlinearities]
{NLS with mass-subcritical combined nonlinearities: small mass $L^2$-scattering}
\author[J. Bellazzini, L. Forcella, V. Georgiev
]{Jacopo Bellazzini, Luigi Forcella, and Vladimir Georgiev}
\address[J. Bellazzini]{Department of Mathematics, University of Pisa, Largo Bruno Pontecorvo, 5, 56127, Pisa, Italy}
\email{jacopo.bellazzini@unipi.it}
\address[L. Forcella]{Dipartimento di Matematica, Universit\`a Degli Studi di Pisa, Largo Bruno Pontecorvo, 5, 56127, Pisa, Italy}
\email{luigi.forcella@unipi.it}
\address[V. Georgiev]{Dipartimento di Matematica, Universit\`a Degli Studi di Pisa, Largo Bruno Pontecorvo, 5, 56127, Pisa, Italy, and Faculty of Science and Engineering, Waseda University, 3-4-1, Okubo, Shinjuku-ku, Tokyo 169-8555, Japan, and Institute of Mathematics and Informatics,  Bulgarian Academy of Sciences,  Acad. Georgi Bonchev Str., Block 8, 1113 Sofia, Bulgaria}
\email{vladimir.simeonov.gueorguiev@unipi.it}
\subjclass[2020]{35Q55; 35P25, 35J20.}
\keywords{Nonlinear Schr\"odinger equation, Scattering, mixed nonlinearities}
\begin{document}
	
\begin{abstract}
We prove small data scattering in the mass-subcritical regime for the NLS equation with double nonlinearities, where a focusing leading term is perturbed by a lower order defocusing nonlinear term. Our proof relies on the pseudo-conformal transformation in conjunction with a general variational argument used to obtain the positivity of certain modified energies. Moreover, the smallness assumption is only on the mass of the initial data, and not on the whole $\Sigma$-norm.
\end{abstract}

\maketitle

\section{Introduction}
In this paper, we consider the Cauchy problem for the  nonlinear Schr\"odinger equation (NLS) with combined nonlinearities
\begin{equation}\label{NLS0}
    \begin{cases}
    i \partial_t \psi +\Delta_x \psi =  |\psi|^{q-1}\psi -  |\psi|^{p-1}\psi\\
     \psi(0)= \psi_0  \in \Sigma
    \end{cases},
\end{equation}
where $\psi=\psi(t,x)$, $\psi:\mathbb{R}\times \R^d \mapsto \mathbb{C}$,  $\Delta_x$ is the standard Laplace operator with respect to the space variables $x$,  and 
\begin{equation*}\label{MA1}
    1+\frac{2}{d}< q < p < 1+ \frac{4}{d}.
\end{equation*}
 The space $\Sigma$ is defined as the subspace of $H^1(\mathbb R^d)$ with finite variance:
\[
\Sigma=\{u\in H^1(\mathbb R^d)\, \hbox{ s.t. } \, |x|u \in L^2(\mathbb R^d) \},
\]
endowed with norm $\|f\|_{\Sigma}^2=\|f\|_{H^1(\mathbb R^d)}^2+\||x|u\|_{L^2(\mathbb R^d)}^2$. 
Taking into account the signs in front of the nonlinear terms in \eqref{NLS0}, we are dealing with a mixed nonlinearity  with  focusing  mass-subcritical term of order $p$, and a defocusing term of order $q \in (1+\frac{2}{d},p)$. The critical power $p_0=1+\frac4d$ is defined by borrowing from the usual scaling invariant equation with one nonlinearity
\[
i\partial_t \psi+\Delta_x \psi = \pm |\psi|^{p-1}\psi,
\] and refers to the conservation of the $L^2$-norm of a solution under the scaling invariance of the equation given by $\psi_\lambda(t,x)=\lambda^{\frac{2}{p_0-1}}\psi(\lambda^2 t, \lambda x)$. \\
The exponent $q_0=1+\frac{2}{d}$ is instead known as the exponent separating the so-called short-range and long-range  nonlinearities. See the discussion below. 

A function $\psi(t)$, with  $\psi\in  C((-T_{\min}, T_{\max});\Sigma)$ is said  a mild solution    to \eqref{NLS0} if it satisfies the integral equation
\begin{equation*}\label{NLSf4} 
    \psi(t) = U(t) \psi_0 -i \int_0^t U(t-s) g(\psi(s)) d s,
\end{equation*}
where $U(t)=e^{it\Delta_x}$ is the free Schr\"odinger propagator
and $g(\psi) = |\psi|^{q-1}\psi - |\psi|^{p-1}\psi$.

At a formal level, equation \eqref{NLS0} preserves the following quantities: the mass, 
\begin{equation}\label{eq:mass}
    M(\psi(t)) = \int_{\mathbb{R}^d} |\psi(t,x)|^2 dx,
\end{equation}
 the energy
\begin{equation}\label{eq:en}
\begin{aligned}
     E(\psi(t))& = \frac{1}{2}\int_{\mathbb{R}^d} |\nabla \psi(t,x)|^2 dx + \frac{1}{q+1} \int_{\mathbb{R}^d} |\psi(t,x)|^{q+1} dx\\
     &- \frac{1}{p+1} \int_{\mathbb{R}^d} |\psi(t,x)|^{p+1} dx,
\end{aligned}
\end{equation}
and the momentum
\begin{equation}\label{eq:m}
\begin{aligned}
     P(\psi(t))& = \Ima \int_{\R^d} \psi(t,x) \nabla \bar\psi(t,x) dx.
\end{aligned}
\end{equation}

A solution to \eqref{NLS0} satisfying conservation of  \eqref{eq:mass}, \eqref{eq:en}, and \eqref{eq:m} is known to exist. See Proposition \ref{prop:lwp} in Section \ref{Sec:pct}.

\subsection{Purpose of the work}  In this paper, we are interested in the scattering properties of solutions to \eqref{NLS0}. In particular, once it is  known that a solution exists globally-in-time, we ask whether the solution behaves linearly for large times. Specifically, one may ask the existence of functions $\psi^\pm$ such that 
\begin{equation}\label{def-scattering}
    \lim_{t\to\pm \infty} \|U(-t)\psi(t)-\psi^\pm\|_X=0,
\end{equation}
in some functional space $X$.
For our scopes, $X$ will be the space $L^2(\mathbb R^d)$, $H^1(\mathbb R^d)$, or $\Sigma$, and we will refer to  \eqref{def-scattering} as $L^2$-scattering, $H^1$-scattering, or $\Sigma$-scattering, respectively. Note that $U(t)$ is an isometry in $H^s(\mathbb R^d)$, $s\in\mathbb{R}$, and so \eqref{def-scattering} is equivalent to $\displaystyle\lim_{t\to\pm \infty} \|\psi(t)-U(t)\psi^\pm\|_{X}=0$ if $X=L^2(\mathbb{R}^d)$ or $X=H^1(\mathbb {R}^d)$. This is no more true when $X=\Sigma$, and we refer to \cite{Beg} for some results in this direction.  
Before going into details, we state our main result.  
\begin{thm}\label{mainthm-sca-0}
Let $1+\frac{2}{d}< q < p < 1+ \frac{4}{d}$. There  exists a positive mass $\rho^{\star}$ such that for any $\psi_0\in\Sigma$ with   $\|\psi_0\|_{L^2(\mathbb R^d)}<\rho^{\star}$ a solution to \eqref{NLS0} scatters in $L^2(\mathbb R^d)$.
\end{thm}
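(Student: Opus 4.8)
The plan is to transport the problem, via the pseudo-conformal transformation, onto a bounded time interval where scattering becomes a continuity statement at one endpoint, and then to secure the a priori control needed to reach that endpoint by exploiting the positivity of a suitable modified energy — a positivity that, crucially, only senses the smallness of the mass.

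\textbf{Step 1 (reduction by the pseudo-conformal transform).} Let $\psi\in C(\R;\Sigma)$ be the global solution of \eqref{NLS0} furnished by Proposition~\ref{prop:lwp}; I treat $t\to+\infty$, the case $t\to-\infty$ being symmetric. For $t\in(0,1]$ put
$$v(t,x):=(it)^{-d/2}\,e^{i|x|^2/(4t)}\,\overline{\psi\!\left(\tfrac1t,\tfrac xt\right)},$$
so that $v$ solves, on $(0,1]$, the non-autonomous equation
$$i\partial_t v+\Delta_x v=|t|^{a}|v|^{q-1}v-|t|^{b}|v|^{p-1}v,\qquad a:=\tfrac{d(q-1)}{2}-2,\quad b:=\tfrac{d(p-1)}{2}-2,$$
with the same sign pattern as \eqref{NLS0}. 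Since $1+\tfrac2d<q<p<1+\tfrac4d$ one has $-1<a<b<0$: the weights $|t|^{a},|t|^{b}$ are singular at $t=0$ but integrable there, which is exactly the short-range threshold $q,p>1+\tfrac2d$. The transform is an $L^2$-isometry, so $\|v(t)\|_{L^2}\equiv\|\psi_0\|_{L^2}$; and $U(-t)\psi(t)$ converges in $L^2$ as $t\to+\infty$ if and only if $v$ extends continuously up to $t=0$ in $L^2(\R^d)$. It thus suffices to prove this continuity when $\|\psi_0\|_{L^2}<\rho^\star$.

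\textbf{Step 2 (modified energy and variational positivity).} The one obstruction to extending $v$ to $t=0$ is the a priori growth of $\|\nabla v(t)\|_{L^2}$ — equivalently of the pseudo-conformal energy built from \eqref{eq:en} — which the focusing $p$-term could in principle drive too fast. To rule this out I would work with a modified energy $\mathcal E(t)$: the transformed energy
$$\frac12\|\nabla v(t)\|_{L^2}^2+\frac{|t|^{a}}{q+1}\|v(t)\|_{L^{q+1}}^{q+1}-\frac{|t|^{b}}{p+1}\|v(t)\|_{L^{p+1}}^{p+1},$$
dressed with a time weight (and, if needed, a correction absorbing part of its non-autonomous drift) so that $\mathcal E$ satisfies a favourable differential inequality on $(0,1]$. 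The heart of the matter is then a general variational argument giving the \emph{positivity} of this modified energy: there is $\rho^\star=\rho^\star(d,p,q)>0$ such that, whenever $\|u\|_{L^2}<\rho^\star$, the functional
$$\frac12\|\nabla u\|_{L^2}^2+\frac{c_1}{q+1}\|u\|_{L^{q+1}}^{q+1}-\frac{c_2}{p+1}\|u\|_{L^{p+1}}^{p+1}$$
is non-negative, and in fact coercive, for all coefficients $c_1\ge c_2>0$ — in particular for $(c_1,c_2)=(|t|^a,|t|^b)$ on $(0,1]$ — with non-degenerate constants. Two structural facts drive this: (i) $p<1+\tfrac4d$ is mass-subcritical, so Gagliardo--Nirenberg controls the dangerous term by $\|\nabla u\|_{L^2}^{d(p-1)/2}\|u\|_{L^2}^{(p+1)-d(p-1)/2}$, with $d(p-1)/2<2$ and a positive power of the small mass in front; (ii) near $t=0$, because $a<b$, the defocusing nonlinearity carries the more singular weight $|t|^a\ge|t|^b$ and hence dominates the focusing one. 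Feeding this positivity into the differential inequality for $\mathcal E$ controls $\|\nabla v(t)\|_{L^2}$ on some $(0,\delta]$, $\delta\in(0,1]$; the threshold $\rho^\star$ is fixed — and shrunk if necessary — precisely so that whatever residual loss remains stays within the slack of $q>1+\tfrac2d$.

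\textbf{Step 3 (conclusion).} With $v$ under control on $(0,\delta]$ — in $H^1$, hence (by mass conservation) in the pertinent Strichartz norms, at a rate compatible with the integrability of $|t|^a,|t|^b$ — a contraction/continuity argument shows that the Duhamel tail
$$\int_0^{\delta'}U(-\sigma)\Big(|\sigma|^{a}|v|^{q-1}v-|\sigma|^{b}|v|^{p-1}v\Big)(\sigma)\,d\sigma$$
tends to $0$ in $L^2(\R^d)$ as $\delta'\to0$. Hence $U(-t)v(t)$ is Cauchy in $L^2$ as $t\to0^+$, $v$ extends continuously to $t=0$ in $L^2$, and undoing the pseudo-conformal transform produces the scattering state $\psi^+$; the identical scheme at $t\to-\infty$ gives $\psi^-$, completing the proof. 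The delicate point is Step~2: establishing the positivity (and coercivity) of the modified energy \emph{uniformly in the time parameter} — both weights $|t|^a,|t|^b$ become singular at $t=0$ and the $q$- and $p$-contributions compete — while assuming \emph{only} $\|\psi_0\|_{L^2}<\rho^\star$, with no constraint on the (possibly large) $\Sigma$-norm of the data. This decoupling of the smallness requirement from the full $\Sigma$-norm is the genuine content of the argument, and it is available here because the mass is exactly the conserved quantity equal to $\|v(t)\|_{L^2}$ along the transformed flow; once the modified energy is tamed, the pseudo-conformal bookkeeping and the Strichartz estimates near $t=0$ are essentially routine.
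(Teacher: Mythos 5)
Your overall architecture --- pseudo-conformal reduction to a finite time interval, positivity of a modified energy obtained from a mass-only variational threshold, then continuity at the endpoint --- is exactly the paper's strategy. But Step~2, which you rightly identify as the heart of the matter, rests on a claim that is false as stated and is not established by the two facts you invoke. There can be no single $\rho^\star>0$ such that $\tfrac12\|\nabla u\|_{L^2}^2+\tfrac{c_1}{q+1}\|u\|_{L^{q+1}}^{q+1}-\tfrac{c_2}{p+1}\|u\|_{L^{p+1}}^{p+1}\ge 0$ on $\{\|u\|_{L^2}<\rho^\star\}$ \emph{for all} $c_1\ge c_2>0$: take $c_1=c_2=c$ and a fixed $u\in S_\rho$ with $\tfrac{1}{q+1}\|u\|_{q+1}^{q+1}<\tfrac{1}{p+1}\|u\|_{p+1}^{p+1}$ (such $u$ exist on every mass sphere via the mass-preserving rescaling $u_\lambda=\lambda^{d/2}u(\lambda x)$ with $\lambda$ large, since $p>q$); then the functional is negative once $c$ is large. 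The instance you actually need, $(c_1,c_2)=(t^{-\delta(q)},t^{-\delta(p)})$, \emph{is} uniformly controllable in $t$, but for a reason your sketch does not identify: under $u\mapsto\lambda^{d/2}u(\lambda\cdot)$ with $\lambda\sim t^{-1}$ both weights are simultaneously normalized to $t$-independent constants (equivalently, the effective coupling $c_2\,c_1^{-\delta(p)/\delta(q)}$ is exactly constant in $t$ for these exponents), so a single threshold mass applies uniformly. Moreover, Gagliardo--Nirenberg alone cannot yield non-negativity of the constrained infimum: it bounds the focusing term by a power $\|\nabla u\|_{L^2}^{d(p-1)/2}$ with exponent $<2$, which \emph{loses} to the kinetic term precisely in the small-gradient regime, where the defocusing $q$-term must be brought in. Showing that the infimum over $S_\rho$ is exactly $0$ (not merely bounded below) for all $\rho$ below a strictly positive threshold is a genuine variational theorem --- the paper's Theorem~\ref{mainthm}, proved via subadditivity, the Pohozaev identity, and the emptiness of the set of small-gradient constrained critical points with non-positive energy --- and your two structural facts give no route to it.

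A second, related gap: what must be non-negative is not the transformed energy itself but the \emph{correction term} in its time derivative, and if you differentiate the unweighted transformed energy the $q$-contribution enters with the wrong sign. The paper fixes this by weighting the energy with $(1-\tau)^A$ for $\delta(q)<A<1$, so that the correction $R_A$ acquires coefficients $\tfrac{A}{2}>0$, $A-\delta(q)>0$, $-(A-\delta(p))<0$, i.e.\ again a kinetic-plus-defocusing-minus-focusing structure to which the variational threshold applies; the resulting bound $\|\nabla_\xi\varphi(\tau)\|_{L^2}^2\lesssim(1-\tau)^{-A}$ with $A<1$ is what makes the Duhamel tail integrable, and the admissibility of the decay rates (the paper's condition \eqref{cond:Aqp}) is exactly where $q>1+\tfrac2d$ enters. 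Your ``dressed with a time weight'' gestures at this, but the choice of the window $(\delta(q),1)$ and the rate bookkeeping are not routine. Your Step~3 via Strichartz is a legitimate alternative to the paper's $H^{-2}$/weak-$L^2$ upgrade, but it too consumes the quantitative rates that Step~2, as written, does not deliver.
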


Let us give the following comments about the content of the theorem above. 
\begin{remark}
We emphasize that our smallness assumption is only on the mass of the initial datum, and not in the whole $\Sigma$-norm.  This is a major difference with respect to the classical scattering results as in \cite{CW92, GOV94, NO02} for the  NLS equation (see below for further details).  In 1D, we also cite \cite{HN15}, where the authors consider an arbitrary (complex) linear combination of nonlinear terms, and proved a version of the scattering with small data in some weighted Sobolev spaces. 
\end{remark}
\begin{remark}
Below,  we will provide an upper bound on the mass $\rho^\star$ by means of the threshold mass yielding the existence of ground states for  equation \eqref{eq}.
\end{remark}

It is worth briefly recalling what happens for standard NLS equation with one pure-power nonlinearity. 
The problem of scattering for the following defocusing equation
\begin{equation}\label{NLS-defoc}
i\partial_t \psi+\Delta_x \psi = |\psi|^{q-1}\psi
\end{equation}
and the focusing equation
\begin{equation}\label{NLS-foc}
i\partial_t \psi+\Delta_x \psi = -|\psi|^{p-1}\psi
\end{equation}
has a long history, and a huge amount of works has been produced since the 70's.
In the setting of the present paper, where mass-subcritical nonlinearities are considered, i.e., $1<q<1+\frac{4}{d}$, for the defocusing model \eqref{NLS-defoc} we mention the work by Tsutsumi and Yajima \cite{TY84}, who proved $L^2$-scattering in the whole short-range interval $\frac2d<q<\frac4d$  for solutions in $\Sigma$ by employing the pseudo-conformal transformation, see Section \ref{Sec:pct}. The results in \cite{TY84} are optimal as   Strauss \cite{Strauss74} and Barab \cite{Barab} proved failure of $L^2$-scattering in the long-range regime $1<q\leq \frac2d$. We also cite the more recent paper \cite{BGTV} by Burq, Tzvetkov, Visciglia, and the third author, in which the result in \cite{TY84}   is upgraded to the $H^1$-topology. For the $\Sigma$-scattering, we refer to \cite{CW92,T85}, where the scattering in the strongest norm $\Sigma$ is proved up to a restriction of the short-range interval, i.e., $q\in(q_1, \frac4d)$, with $q_1>q_0$.  \\

As for the focusing equation \eqref{NLS-foc}, the situation is more complicated due to the conflict between the dispersive effect of the linear part of the equation and the nonlinear interaction. While the energy is non-negative definite for \eqref{NLS-defoc},  it does not have a sign for \eqref{NLS-foc}, and when an initial datum is not small (in some sense), the nonlinear effect can be strong enough to prevent scattering.   This is the case of particular solutions, called standing waves, which in fact do exist for \eqref{NLS-foc}, and do not exist for \eqref{NLS-defoc}. Such solutions are of the form  $\psi(t,x)=e^{i\omega t}u(x)$,
 with $\omega\in \R$, and $u(x)\in \C$ is a time-independent function belonging to  $H^1(\R^d)$ which satisfies 
\begin{equation}\label{eq-sw0}
- \Delta u + \omega u -|u|^{p-1}u=0.
\end{equation}
 It is worth recalling that solutions to \eqref{eq-sw0} exist for any mass, and standing waves are global non-scattering solutions. 
 For scattering results under  smallness assumption in some weighted $L^2$-spaces, we refer to the classical works by Cazenave and Weissler, Ginibre, Ozawa, and Velo, and Nakanishi and Ozawa, see  \cite{CW92, GOV94, NO02}, respectively. We also mention the recent work by Ifrim and Tataru \cite{IT-24-Vietnam, IT-2025} for new results about the one dimensional, focusing, cubic NLS. The transition between scattering/non-scattering solutions in term of the size of the initial datum is studied in the papers by Masaki \cite{Satoshi15, Satoshi17}, where the author works in some weighted $L^2$-spaces. It is worth mentioning that in contrast to the mass-critical or mass-supercritical cases $p\geq \frac4d$, Masaki results show that the ground state does not play  the role of the threshold separating the scattering and non-scattering regimes. \\

As the existence of standing waves may represent an obstruction for scattering, we start by considering the possible existence of standing waves for  our mixed model \eqref{NLS0}. Specifically, we look at solutions to \eqref{NLS0} of the form $\psi(t,x)=e^{i\omega t}u(x)$,  with $\omega\in \R$ and $u(x)\in \C$ is a time-independent  $H^1(\R^d)$-function which satisfies 
\begin{equation}\label{eq}
- \Delta u + \omega u +|u|^{q-1}u-|u|^{p-1}u=0.
\end{equation}

Since the mass is a conserved physical quantity along the flow of \eqref{NLS0}, a natural approach to finding solutions $u$ to \eqref{eq} is to seek critical points of the energy functional, constrained to the $L^2$-spheres in $H^1(\mathbb{R}^d)$, which are defined by
\[S_{\rho}=\{u\in H^{1}(\mathbb R^d) \ : \  \|u\|_{ L^2(\mathbb R^d)}=\rho\}.
\]
Thus, a solution to \eqref{eq} is understood as a pair $(\omega_{\rho}, u_\rho) \in \mathbb{R} \times H^1(\mathbb{R}^d)$, where $\omega_{\rho}$ serves as the Lagrange multiplier corresponding to the critical point  $u_{\rho}$ on the constraint set $S_{\rho}$.\medskip

For a fixed a mass $\rho>0$, we define the \emph{ground state energy} the quantity $\mathrm{I}_{\rho^2}$, defined as the infimum of the energy functional over $S_\rho$, namely, 
\begin{equation*}\label{mini1}
\mathrm{I}_{\rho^2}=\inf_{S_{\rho}} E(u),
\end{equation*}
where $E$ is defined as in \eqref{eq:en}.

We have the following existence and non-existence result for the existence of ground states. 

\begin{thm}\label{mainthmforE}
Let $d\geq 1$ and  $1<q< p<1+\frac{4}{d}$. Then there exists a strictly positive mass $\rho_{0,E}$ such that:\\
\textup{(i)} $ \mathrm{I}_{\rho^2}=0$  for all  $\rho\in (0,\rho_{0,E}]$;\\
\textup{(ii)} $ \mathrm{I}_{\rho^2}<0$  for all  $\rho\in (\rho_{0,E}, \infty)$.\smallskip

\noindent Moreover, there are no constrained minimizers for $0<\rho<\rho_{0,E}$, and  for all  $\rho\in [\rho_{0,E}, \infty)$ there  exists $u_{\rho}\in S_{\rho}$ such that $\mathrm{I}_{\rho^2}=E(u_{\rho})$. 
\end{thm}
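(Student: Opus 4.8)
The plan is to analyze the constrained minimization problem $\mathrm{I}_{\rho^2}=\inf_{S_\rho}E(u)$ via a scaling analysis combined with the concentration-compactness method. First I would establish the two elementary facts: (a) $\mathrm{I}_{\rho^2}\le 0$ for every $\rho>0$, and (b) $\rho\mapsto \mathrm{I}_{\rho^2}$ is non-increasing. For (a), fix any $u\in S_\rho$ and consider the $L^2$-invariant dilation $u_\lambda(x)=\lambda^{d/2}u(\lambda x)$; then
\begin{equation*}
E(u_\lambda)=\frac{\lambda^2}{2}\|\nabla u\|_{L^2}^2+\frac{\lambda^{\frac{d(q-1)}{2}}}{q+1}\|u\|_{L^{q+1}}^{q+1}-\frac{\lambda^{\frac{d(p-1)}{2}}}{p+1}\|u\|_{L^{p+1}}^{p+1}.
\end{equation*}
Since $1<q<p<1+\frac4d$, all three exponents of $\lambda$ are positive and the $p$-exponent $\frac{d(p-1)}{2}$ is strictly smaller than $2$ and is also the largest among the three only if $p>q$ forces $\frac{d(p-1)}{2}>\frac{d(q-1)}{2}$ — in any case, as $\lambda\to 0^+$ the dominant term is whichever has the smallest power, and a direct check shows $E(u_\lambda)\to 0^+$ and moreover $E(u_\lambda)<0$ for suitable small $\lambda$ precisely when $\|u\|_{L^{p+1}}$ is large enough relative to the other norms; in general $E(u_\lambda)\to 0$ gives $\mathrm{I}_{\rho^2}\le 0$. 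For (b), given $\rho_1<\rho_2$, take a near-minimizer $u$ for $\rho_1$, scale its mass up by translating far copies / or by the map $u\mapsto \theta u$ with $\theta=\rho_2/\rho_1$ and re-dilate; monotonicity of $\mathrm{I}$ follows from the sign structure, exactly as in the standard sub-critical argument.

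Next I would define $\rho_{0,E}=\sup\{\rho>0:\ \mathrm{I}_{\rho^2}=0\}$ and show it is strictly positive. Positivity comes from a Gagliardo–Nirenberg argument: for $u\in S_\rho$,
\begin{equation*}
\|u\|_{L^{p+1}}^{p+1}\le C_{p,d}\,\|\nabla u\|_{L^2}^{\frac{d(p-1)}{2}}\,\rho^{(p+1)-\frac{d(p-1)}{2}},
\end{equation*}
and since $\frac{d(p-1)}{2}<2$, Young's inequality absorbs this term into $\tfrac12\|\nabla u\|_{L^2}^2$ at the cost of a constant proportional to a positive power of $\rho$; dropping the non-negative defocusing term $\frac1{q+1}\|u\|_{L^{q+1}}^{q+1}\ge 0$, one gets $E(u)\ge \tfrac14\|\nabla u\|_{L^2}^2 - C\rho^{\gamma}$ for $\rho$ small, hence $E(u)\ge 0$ for all $u\in S_\rho$ when $\rho$ is below an explicit threshold — so $\mathrm{I}_{\rho^2}=0$ there, giving $\rho_{0,E}>0$. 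That $\mathrm{I}_{\rho^2}<0$ for all $\rho>\rho_{0,E}$ is immediate from the definition of $\rho_{0,E}$ together with monotonicity, provided one also checks strict monotonicity once $\mathrm{I}$ becomes negative (scale a negative-energy near-minimizer to strictly larger mass and use that the dilation makes $E$ strictly more negative by a scaling computation).

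For the existence of minimizers when $\rho\ge\rho_{0,E}$, I would run concentration-compactness on a minimizing sequence $(u_n)\subset S_\rho$. Coercivity of $E$ on $S_\rho$ (the same Gagliardo–Nirenberg absorption as above, now only using $\frac{d(p-1)}{2}<2$ without smallness) bounds $(u_n)$ in $H^1$. The subadditivity inequality $\mathrm{I}_{\rho^2}<\mathrm{I}_{\alpha^2}+\mathrm{I}_{(\rho^2-\alpha^2)}$ for $0<\alpha<\rho$ — which rules out dichotomy — is the crux: it should follow from the strict monotonicity/strict subadditivity of $\rho\mapsto\mathrm{I}_{\rho^2}$, which for $\rho>\rho_{0,E}$ holds because $\mathrm{I}_{\rho^2}<0$ and the function is strictly decreasing there, while the boundary case $\rho=\rho_{0,E}$ needs a separate, slightly delicate argument (here $\mathrm{I}=0$ but one shows a minimizing sequence cannot split because any nontrivial piece would have mass $\le\rho_{0,E}$ with zero energy, and a strict-inequality argument using that the full mass $\rho_{0,E}$ sits exactly at the transition forces compactness; alternatively one exhibits a test function with $E<0$ impossible, so $\mathrm{I}_{\rho_{0,E}^2}=0$ is attained by a limit that does not vanish because vanishing would force $\|u_n\|_{L^{p+1}}\to0$, hence $\liminf E(u_n)\ge 0$ with equality only if also $\|\nabla u_n\|_{L^2}\to 0$, contradicting $u_n\in S_{\rho_{0,E}}$ with $\rho_{0,E}>0$ unless $\|u_n\|_{L^{q+1}}\to0$ too — a case that can be excluded). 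Vanishing is excluded as just sketched; dichotomy is excluded by strict subadditivity; hence up to translations $u_n\to u_\rho$ strongly in $L^2$, and then in $H^1$ by weak lower semicontinuity of all the relevant functionals, so $u_\rho\in S_\rho$ realizes $\mathrm{I}_{\rho^2}$. I expect the main obstacle to be precisely the borderline mass $\rho=\rho_{0,E}$, where $\mathrm{I}_{\rho^2}=0$ and the usual strict-subadditivity route degenerates; handling it requires a careful localization argument showing that a splitting minimizing sequence would produce a zero-energy configuration of mass strictly below $\rho_{0,E}$ carrying a nontrivial profile, which is incompatible with the definition of $\rho_{0,E}$ as the supremum of masses with vanishing ground state energy.
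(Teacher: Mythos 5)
There is a genuine gap at the most critical step, namely the proof that $\rho_{0,E}>0$. Your argument drops the defocusing term $\frac{1}{q+1}\|u\|_{q+1}^{q+1}$ and then tries to absorb the focusing term via Gagliardo--Nirenberg and Young. What this yields is
\[
E(u)\;\ge\;\tfrac14\|\nabla u\|_{2}^{2}-C\rho^{\sigma},
\]
whose infimum over $S_\rho$ is $-C\rho^{\sigma}<0$ (take $u_\lambda=\lambda^{d/2}u(\lambda x)$ with $\lambda\to0^+$, so that $\|\nabla u_\lambda\|_2\to0$); the conclusion ``$E(u)\ge0$ for all $u\in S_\rho$ when $\rho$ is small'' does not follow. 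In fact it \emph{cannot} follow from any bound that discards the $q$-term: the paper's Proposition \ref{propomog} shows that the functional $\tilde E(u)=\alpha\|\nabla u\|_2^2-\gamma\|u\|_{p+1}^{p+1}$ has $\inf_{S_\rho}\tilde E=\mathrm{J}_{\rho^2}<0$ for \emph{every} $\rho>0$ in the mass-subcritical range, so your lower bound is itself a functional with strictly negative infimum at every mass. The vanishing of $\mathrm{I}_{\rho^2}$ for small $\rho$ is driven entirely by the defocusing term, and the paper extracts this indirectly: it first proves that $\mathrm{I}_{\rho^2}<0$ forces the existence of a minimizer (Lemma \ref{negimplcompact}, via a Pohozaev/first-variation contradiction that yields strong subadditivity), then shows that a putative minimizer of small mass would have small gradient and satisfy $G(u)=0$ and $E(u)\le0$, and finally rules this out by interpolating $\|u\|_{p+1}^{p+1}$ between $\|\nabla u\|_2$ and $\|u\|_{q+1}$ together with the comparison $\|u\|_{q+1}^{q+1}\lesssim\|u\|_{p+1}^{p+1}$ coming from $E-\tfrac12G\le0$ (the set $A_\varepsilon$ in \eqref{empty} is empty). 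Some version of this interplay between the two nonlinearities is unavoidable.

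Two secondary points. First, strict subadditivity $\mathrm{I}_{\rho^2}<\mathrm{I}_{\mu^2}+\mathrm{I}_{\rho^2-\mu^2}$ does not follow merely from ``$\mathrm{I}_{\rho^2}<0$ and strict monotonicity''; one needs the strict decrease of $s\mapsto \mathrm{I}_{s^2}/s^2$, which the paper obtains through the same Pohozaev-type contradiction. Second, your treatment of the borderline mass $\rho=\rho_{0,E}$ and of the non-existence of minimizers for $\rho<\rho_{0,E}$ is only heuristic; the paper handles the former by approximating with ground states of masses $\rho_n\downarrow\rho_{0,E}$ (Lieb translation plus Brezis--Lieb, Lemma \ref{lem-mass-cri}) and the latter by showing that a zero-energy constrained minimizer would simultaneously satisfy the Euler--Lagrange scaling identity and $E=0$, forcing $\|\nabla u\|_2=0$ (Lemma \ref{lemnonex}).
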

 We will refer to the minimal mass $\rho_{0,E}$ as the ground state threshold mass.
 \begin{remark}
    Note that any ground state $u_\rho$ given in Theorem \ref{mainthmforE} belongs to $\Sigma$ as well. Indeed, it is a solution to the elliptic equation \eqref{eq} with $\omega>0$ and therefore is in $L^2(|x|^2dx)$. See Appendix \ref{app}.
\end{remark}

A natural question that arises after proving the existence of a threshold mass for the existence of ground states, is whether there exists a smaller threshold mass for the existence of generic standing waves. In order to answer this question, we recall that for any  standing wave $\psi(t,x)=e^{i\omega t}u(x)$ we have $G(u)=0$, where
 $G(u)$ is the following Pohozaev functional
\[ G(u)=\int_{\mathbb R^d}|\nabla u|^{2}dx+\frac{d(q-1)}{2(q+1)}\int_{\mathbb R^d}| u|^{q+1}dx-\frac{d(p-1)}{2(p+1)}\int_{\mathbb R^d}|u|^{p+1}dx.
\]
Note that  the Pohozaev functional has the same structure as the energy functional \eqref{eq:en}, the two being different only  by the  constants appearing in front of the integrals. Then,  it becomes relevant  to study   the existence of minimizers for a general defocusing-focusing energy functional of the type 
\[
E^{\alpha, \beta, \gamma}(u):=\alpha\|\nabla u\|_{L^2(\mathbb R^d)}^2+\beta \|u\|^{q+1}_{L^{q+1}(\mathbb R^d)}-\gamma\|u\|^{p+1}_{L^{p+1}(\mathbb R^d)}
\]
constrained to the manifold $S_{\rho}$, with positive constants $\alpha,\beta, \gamma$ and $1<q<p<1+\frac{4}{d}$.
Thus, we consider  the general problem 
\[
\mathrm{I}_{\rho^2}^{\alpha, \beta,\gamma}:=\inf_{S_{\rho}}E^{\alpha, \beta,\gamma}(u),
\]
and we prove the following existence and non-existence result of minimizers. 
\begin{thm}\label{mainthm}
Let $d\geq 1$, $1<q< p<1+\frac{4}{d}$, and $\alpha>0$, $\beta>0$, $\gamma>0$. Then there exists a strictly positive mass 
\begin{equation}\label{threshold-mass}
\rho_0=\rho_0(\alpha, \beta, \gamma)
\end{equation} such that:\\
\textup{(i)} $\mathrm{I}_{\rho^2}^{\alpha, \beta,\gamma}=0$  for all  $\rho\in (0,\rho_0]$;\\
\textup{(ii)} $ \mathrm{I}_{\rho^2}^{\alpha, \beta,\gamma}<0$  for all  $\rho\in (\rho_0, \infty)$.\smallskip

\noindent Moreover, there are no constrained minimizers for $0<\rho<\rho_0$, and  for all  $\rho\in [\rho_0, \infty)$ there  exists $u_{\rho}\in S_{\rho}$ such that $\mathrm{I}_{\rho^2}^{\alpha, \beta, \gamma}=E^{\alpha, \beta, \gamma}(u_{\rho})$. 
\end{thm}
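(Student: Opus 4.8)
The plan is to analyze the scaling behavior of $E^{\alpha,\beta,\gamma}$ on $S_\rho$ and exploit the fact that the gradient term and the $L^{q+1}$-term are both mass-supercritical-free in the subcritical sense, while the focusing $L^{p+1}$-term is the only negative contribution. First I would set up the fiber (scaling) map: for $u\in S_\rho$ and $\lambda>0$, put $u^\lambda(x)=\lambda^{d/2}u(\lambda x)$, so that $\|u^\lambda\|_{L^2}=\rho$ is preserved, $\|\nabla u^\lambda\|_{L^2}^2=\lambda^2\|\nabla u\|_{L^2}^2$, $\|u^\lambda\|_{L^{q+1}}^{q+1}=\lambda^{d(q-1)/2}\|u\|_{L^{q+1}}^{q+1}$, and $\|u^\lambda\|_{L^{p+1}}^{p+1}=\lambda^{d(p-1)/2}\|u\|_{L^{p+1}}^{p+1}$. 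Since $1<q<p<1+4/d$ we have $0<d(q-1)/2<d(p-1)/2<2$, so along the fiber $E^{\alpha,\beta,\gamma}(u^\lambda)=\alpha\lambda^2 A+\beta\lambda^{d(q-1)/2}B-\gamma\lambda^{d(p-1)/2}C$ with $A,B,C\ge 0$; as $\lambda\to 0^+$ this tends to $0$ from above (the positive powers dominate the other positive power, and the negative term has an intermediate exponent), which already shows $\mathrm{I}^{\alpha,\beta,\gamma}_{\rho^2}\le 0$ for every $\rho$ and that small mass cannot produce a negative infimum "locally".

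Next I would prove part (i), i.e. $\mathrm{I}^{\alpha,\beta,\gamma}_{\rho^2}=0$ for $\rho$ small and no minimizer there. The key inequality is the Gagliardo–Nirenberg estimate
\[
\|u\|_{L^{p+1}}^{p+1}\le C_{GN}\|\nabla u\|_{L^2}^{\frac{d(p-1)}{2}}\|u\|_{L^2}^{p+1-\frac{d(p-1)}{2}},
\]
so for $u\in S_\rho$,
\[
E^{\alpha,\beta,\gamma}(u)\ge \alpha\|\nabla u\|_{L^2}^2-\gamma C_{GN}\rho^{\,p+1-\frac{d(p-1)}{2}}\|\nabla u\|_{L^2}^{\frac{d(p-1)}{2}}
=\|\nabla u\|_{L^2}^{\frac{d(p-1)}{2}}\Bigl(\alpha\|\nabla u\|_{L^2}^{2-\frac{d(p-1)}{2}}-\gamma C_{GN}\rho^{\,p+1-\frac{d(p-1)}{2}}\Bigr).
\]
Because $2-d(p-1)/2>0$, when $\|\nabla u\|_{L^2}$ is small the bracket can be negative; but then one shows that for $\rho$ below a threshold $\rho_0$ one can combine this with the nonnegativity of $\beta\|u\|_{L^{q+1}}^{q+1}$ and a two-term argument — keep the full $\alpha\|\nabla u\|^2+\beta\|u\|_{q+1}^{q+1}$ and bound $\gamma\|u\|_{p+1}^{p+1}$ by an interpolation/Young inequality between $\|\nabla u\|^2$ and $\|u\|_{q+1}^{q+1}$ that is exactly critical in the exponent $d(p-1)/2$ lying strictly between $d(q-1)/2$ and $2$ — to conclude $E^{\alpha,\beta,\gamma}(u)\ge 0$ for all $u\in S_\rho$ whenever $\rho\le\rho_0$. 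Define $\rho_0$ as the supremum of such masses. Together with $\mathrm{I}^{\alpha,\beta,\gamma}_{\rho^2}\le 0$ this gives $\mathrm{I}^{\alpha,\beta,\gamma}_{\rho^2}=0$; and a minimizer would be a nontrivial function with $E^{\alpha,\beta,\gamma}(u)=0$, while the strict monotonicity of $\lambda\mapsto E^{\alpha,\beta,\gamma}(u^\lambda)$ near $\lambda$ realizing the value (or a direct Lagrange-multiplier/Pohozaev computation forcing $u\equiv 0$) rules this out, proving nonexistence below $\rho_0$.

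For part (ii), I would first show strict monotonicity and continuity of $\rho\mapsto \mathrm{I}^{\alpha,\beta,\gamma}_{\rho^2}$: using $u\mapsto (\rho'/\rho)u$ or a more careful scaling one proves $\mathrm{I}^{\alpha,\beta,\gamma}_{(\rho')^2}\le (\text{something})\,\mathrm{I}^{\alpha,\beta,\gamma}_{\rho^2}$ for $\rho'>\rho$, hence once the infimum is negative for some mass it is negative and strictly decreasing for all larger masses; this pins down $\rho_0$ as exactly the transition value and shows $\mathrm{I}^{\alpha,\beta,\gamma}_{\rho^2}<0$ for $\rho>\rho_0$. Finally, for existence of a minimizer when $\rho\ge\rho_0$, I would run the concentration–compactness / profile decomposition argument on a minimizing sequence: the sequence is bounded in $H^1$ (the coercivity comes from $\alpha\|\nabla u\|^2$ controlling dichotomy once $\mathrm{I}<0$ or $=0$ with the subadditivity inequality $\mathrm{I}^{\alpha,\beta,\gamma}_{\rho^2}<\mathrm{I}^{\alpha,\beta,\gamma}_{\mu^2}+\mathrm{I}^{\alpha,\beta,\gamma}_{(\rho^2-\mu^2)}$ for $0<\mu<\rho$ with $\rho\ge\rho_0$, which follows from the strict monotonicity just established together with $\mathrm{I}\le 0$), vanishing is excluded by the Gagliardo–Nirenberg inequality since $\mathrm{I}\le 0$ and a vanishing sequence would force $\liminf E^{\alpha,\beta,\gamma}\ge 0$ with the focusing term tending to $0$, and at $\rho=\rho_0$ one argues the minimizing sequence does not escape to the boundary of masses. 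Hence a minimizer exists for every $\rho\ge\rho_0$.

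The main obstacle I expect is part (i): proving the \emph{sharp} transition, i.e. that the threshold for $\mathrm{I}^{\alpha,\beta,\gamma}_{\rho^2}=0$ coincides with the threshold for existence of minimizers, and that there is a single clean $\rho_0$ rather than an interval of ambiguity. This requires the delicate interpolation inequality that simultaneously uses $\|\nabla u\|_{L^2}^2$ and $\|u\|_{L^{q+1}}^{q+1}$ to dominate $\gamma\|u\|_{L^{p+1}}^{p+1}$ with the \emph{exact} critical constant, and matching it with the scaling computation along the fiber to see that the infimum is attained — or not — precisely at the boundary. The subadditivity/strict-monotonicity step feeding concentration–compactness for $\rho>\rho_0$ is comparatively standard, but verifying it at the endpoint $\rho=\rho_0$, where $\mathrm{I}=0$, needs extra care since the usual strict subadditivity may degenerate.
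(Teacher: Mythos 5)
Your overall architecture (scaling to get $\mathrm{I}_{\rho^2}\le 0$, a Gagliardo--Nirenberg smallness argument for (i), monotonicity plus concentration--compactness for (ii)) is close to the paper's, and your part (i) can be made to work: splitting $S_\rho$ into the regime $\|\nabla u\|_2\ge K(\rho)$, where the mass-weighted Gagliardo--Nirenberg bound gives $\alpha\|\nabla u\|_2^2\ge\gamma\|u\|_{p+1}^{p+1}$, and the regime $\|\nabla u\|_2< K(\rho)$, where the interpolation $\|u\|_{p+1}^{p+1}\lesssim(\|\nabla u\|_2^2)^a(\|u\|_{q+1}^{q+1})^b$ with $a+b>1$ closes because both quantities are small, does yield $E^{\alpha,\beta,\gamma}\ge 0$ on $S_\rho$ for small $\rho$; this is the same interpolation the paper uses to prove that the set $A_\varepsilon$ in \eqref{empty} is empty, applied globally on $S_\rho$ rather than only to zero-energy Pohozaev states. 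One logical point needs repair, though: strict subadditivity does \emph{not} follow from strict monotonicity of $\rho\mapsto\mathrm{I}_{\rho^2}$ together with $\mathrm{I}_{\rho^2}\le 0$ (the function $-\rho^2$ is strictly decreasing, nonpositive, and only weakly subadditive). What actually saves your argument is the quantitative form of your own amplitude-scaling step: $E^{\alpha,\beta,\gamma}(tu)\le t^{q+1}E^{\alpha,\beta,\gamma}(u)$ for $t\ge 1$ gives $\mathrm{I}_{\theta m}\le\theta^{(q+1)/2}\mathrm{I}_{m}$ in the variable $m=\rho^2$, and since $(q+1)/2>1$ this super-homogeneity (not mere monotonicity) yields $\mathrm{I}_{s^2}/s^2>\mathrm{I}_{\rho^2}/\rho^2$ for $s<\rho$ whenever $\mathrm{I}_{\rho^2}<0$, hence strict subadditivity and existence for every $\rho>\rho_0$. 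The paper reaches the same ratio inequality by a more involved route in Lemma \ref{negimplcompact}, combining the infimum of the ratio with the Pohozaev identity.

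The genuine gap is the endpoint $\rho=\rho_0$, which the statement requires and which you flag but do not resolve. There $\mathrm{I}_{\rho_0^2}=0$, strict subadditivity degenerates identically (the inequality $0<0+0$ is false), and, worse, a generic minimizing sequence at $\rho_0$ \emph{does} escape: $u_\lambda(x)=\lambda^{d/2}u(\lambda x)$ with $\lambda\to 0^+$ is a minimizing sequence converging weakly to $0$, so no concentration--compactness alternative applied to an arbitrary minimizing sequence can succeed. The paper's resolution (Lemma \ref{lem-mass-cri}) is a separate argument: take masses $\rho_n\downarrow\rho_0$, let $u_{\rho_n}$ be the ground states already constructed at those masses (where $\mathrm{I}_{\rho_n^2}<0$), show $\liminf_n\|u_{\rho_n}\|_{p+1}>0$ using the quantitative lower bound on $\|\nabla u\|_2$ for zero-energy Pohozaev states, extract a nontrivial weak limit by Lieb's translation lemma, and rule out loss of mass by Brezis--Lieb together with the non-existence of minimizers at masses strictly below $\rho_0$ (Lemma \ref{lemnonex}). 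Without an argument of this type your proof establishes the theorem only for $\rho>\rho_0$ and $\rho<\rho_0$, not at $\rho_0$ itself.
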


It is clear that Theorem \ref{mainthmforE} is just a specific case 
of Theorem \ref{mainthm} when $\alpha=\frac 12, \beta=\frac{1}{q+1}, \gamma=\frac{1}{p+1}$, namely $\rho_0=\rho_{0,E}$, and that Theorem \ref{mainthm} also guarantees  that $G(u)>0$ for $u\in S_{\rho}$ when 
 $\rho<\rho_{0,SW}$ where 
\begin{equation}\label{defenergsw}
\rho_{0,SW}=\rho_0\left(1, \frac{d(q-1)}{2(q+1)}, \frac{d(p-1)}{2(p+1)}\right).
\end{equation}
As a consequence, we have the following.
\begin{coro}\label{cor1}
Standing waves solutions $\psi(t,x)=e^{i\omega t}u(x)$,  $\omega\in\mathbb R$ and $u\in H^1(\R^d)$ with mass smaller than $\rho_{0,SW}$ cannot exist.
\end{coro}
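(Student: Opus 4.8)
The plan is to deduce Corollary \ref{cor1} directly from Theorem \ref{mainthm}, using the observation already made in the text that the Pohozaev functional $G$ coincides with the generalized energy $E^{\alpha,\beta,\gamma}$ for the coefficients $\alpha=1$, $\beta=\frac{d(q-1)}{2(q+1)}$, $\gamma=\frac{d(p-1)}{2(p+1)}$, which are all strictly positive since $1<q<p<1+\frac4d$. With this identification, $\rho_{0,SW}$ as in \eqref{defenergsw} is exactly the threshold mass $\rho_0(\alpha,\beta,\gamma)$ furnished by Theorem \ref{mainthm} for this choice of functional.

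First I would record the key intermediate fact: $G(u)>0$ for every $u\in S_\rho$ whenever $0<\rho<\rho_{0,SW}$. Indeed, by Theorem \ref{mainthm}(i) applied to $E^{\alpha,\beta,\gamma}=G$ we have $\inf_{S_\rho}G=\mathrm{I}_{\rho^2}^{\alpha,\beta,\gamma}=0$, so that $G(u)\ge 0$ on $S_\rho$; and if the value $G(u)=0$ were attained at some $u\in S_\rho$, then $u$ would be a constrained minimizer of $E^{\alpha,\beta,\gamma}$ on $S_\rho$, contradicting the non-existence of minimizers for $\rho<\rho_0$ asserted in the same theorem. Hence the infimum is not attained and $G$ is strictly positive on $S_\rho$.

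Next I would argue by contradiction. Suppose $\psi(t,x)=e^{i\omega t}u(x)$ is a standing wave with $u\in H^1(\mathbb R^d)$ nontrivial and $\|u\|_{L^2(\mathbb R^d)}=\rho<\rho_{0,SW}$, the trivial solution being excluded from the notion of standing wave. Then $u\in S_\rho$ with $0<\rho<\rho_{0,SW}$, so by the previous step $G(u)>0$. On the other hand, as recalled before the statement of the corollary, every standing wave satisfies the Pohozaev identity $G(u)=0$; this follows from combining the Nehari-type identity (testing \eqref{eq} against $\bar u$) with the dilation identity (testing against $x\cdot\nabla u$) and eliminating $\omega$ between the two, the $H^1$ regularity and decay of $u$ making these manipulations rigorous. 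The relations $G(u)>0$ and $G(u)=0$ are incompatible, so no such $u$ can exist, which is the assertion of the corollary.

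I do not expect any substantial obstacle here, as the real work is packaged in Theorem \ref{mainthm}; the only points needing minor care are the trivial check that $\alpha,\beta,\gamma$ above are positive so that Theorem \ref{mainthm} is applicable, the exclusion of $u\equiv 0$, and a consistent reading of ``mass'' in the threshold notation (the $L^2$-norm $\rho$, equivalently $\rho^2$). It is worth stressing that since the Pohozaev relation $G(u)=0$ is obtained precisely by eliminating $\omega$, the conclusion holds for every real $\omega$ with no sign restriction.
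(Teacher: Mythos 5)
Your proposal is correct and follows exactly the route the paper intends: identify $G$ with $E^{1,\frac{d(q-1)}{2(q+1)},\frac{d(p-1)}{2(p+1)}}$, invoke Theorem \ref{mainthm} to get $G>0$ on $S_\rho$ for $\rho<\rho_{0,SW}$ (using both the vanishing of the infimum and the non-attainment), and contradict the Pohozaev identity $G(u)=0$. The paper treats this as an immediate consequence and gives no separate proof, so your write-up simply makes the same argument explicit.
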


\begin{remark}
It is worth mentioning that a  general result for nonhomogeneous nonlinearities that gives an analogous statement of Theorem \ref{mainthm} is contained in  \cite{JL22}. In this paper, we give a new alternative and shorter proof that uses only scaling arguments and  is suitable for pure-power mixed nonlinearities. 
In this way, on the one hand,  all the results in this paper are self-contained. On the other hand, and most importantly, our ideas are precisely tailored to work on the dynamical properties of solutions to \eqref{NLS0}, as they enable us to perform suitable controls on various energy functionals that appear throughout the paper. We introduced this new approach very recently in the context of the Half-Wave equation, see \cite{BF2025}, and in this article we further develop our method to tackle the much harder scattering problem related to \eqref{NLS0}. 
\end{remark}
\begin{remark}
    Since the seminal work by Tao, Vi\c san, and Zhang  \cite{TVZ}, the NLS with combined nonlinearities has attracted a lot of attention. Concerning  the existence of standing waves when the leading nonlinearity is mass super-critical it is worth mentioning at least the following recent works \cite{BFG23, FH21, LRN20, SoaveJDE, SoaveJFA, JJTV}, and references therein. 
\end{remark}

Going back to our main goal, i.e., the long-time dynamics of solutions to \eqref{NLS0}, and having in mind Theorem \ref{mainthmforE} and Corollary \ref{cor1}, it is clear that the long-time dynamics of solutions to \eqref{NLS0} shall be rather different from those of the NLS equation with a single power-like nonlinear term. Indeed, we are in a scenario where standing waves may exist or not, depending on the mass.  We can re-state Theorem \ref{mainthm-sca-0} in a more precise and quantitative way. To this aim, we introduce the following exponents:
\begin{equation}\label{def:alphas}
\delta(q) = \frac{4-d(q-1)}{2}\quad \hbox{ and } \quad \delta(p) =\frac{4-d(p-1)}{2}.
\end{equation}
Note that both  exponents above are in the range $(0,1)$.

\begin{thm}\label{mainthm-sca}
Let $d\geq1$, $1+\frac{2}{d}< q < p < 1+ \frac{4}{d}$, and $\rho_{0,E}>0$ be the threshold mass for the existence of ground states given in Theorem \ref{mainthmforE}. Then  there  exists a positive mass $\rho^{\star}$,  given by 
\[
\rho^{\star}=\rho_0\left(\frac{1}{2},\frac{(1-\delta(q))(1-\delta(p))^{-\frac{\delta(q)}{\delta(p)}}}{q+1}, \frac{1}{p+1}\right)<\rho_{0,E},
\]
such that for any $\psi_0 \in \Sigma$ with $\|\psi_0\|_{L^2(\mathbb R^d)}<\rho^{\star}$ there exist $\psi_{\pm}\in L^2(\mathbb R^d)$ satisfying
\[
 \lim_{t\to\pm \infty}\|\psi(t)-U(t)\psi_{\pm}\|_{L^2(\mathbb R^d)}= 0\]
with $\psi(t)$ solution to \eqref{NLS0}.
\end{thm}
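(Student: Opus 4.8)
The plan is to exploit the pseudo-conformal transformation, which turns the scattering question for \eqref{NLS0} into a global-in-time control question for a transformed equation near $t=0$ (corresponding to $t\to\pm\infty$ in the original variables). Recall that if $\psi$ solves \eqref{NLS0}, then
\[
v(t,x) = \frac{1}{(it)^{d/2}}\,\overline{\psi}\!\left(\frac{1}{t},\frac{x}{t}\right) e^{i|x|^2/(4t)}
\]
(up to the exact normalization fixed in Section~\ref{Sec:pct}) solves an NLS equation with the same nonlinear structure but with time-dependent coefficients $|t|^{-\delta(q)}$ and $|t|^{-\delta(p)}$ in front of the order-$q$ and order-$p$ terms respectively, where $\delta(q),\delta(p)\in(0,1)$ are the exponents in \eqref{def:alphas}. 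Because both powers are mass-subcritical and strictly short-range ($q,p>1+\tfrac2d$), the two exponents are integrable near $0$; $L^2$-scattering for $\psi$ is then equivalent to existence of a limit $v(t)\to v_0$ in $L^2$ as $t\to 0$, which in turn follows from a uniform-in-time bound on $v$ in a suitable norm (e.g.\ $L^\infty_t\Sigma$ together with Strichartz norms) on an interval $(0,T]$.

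The first step is to write down the transformed equation precisely and record the conserved/monotone quantity it carries: the pseudo-conformal energy of $\psi$, equivalently the energy of $v$, which takes the form
\[
\mathcal{E}(t) = \frac12\|\nabla v(t)\|_{L^2}^2 + \frac{|t|^{-\delta(q)}}{q+1}\|v(t)\|_{L^{q+1}}^{q+1} - \frac{|t|^{-\delta(p)}}{p+1}\|v(t)\|_{L^{p+1}}^{p+1},
\]
plus the mass $\|v(t)\|_{L^2}=\|\psi_0\|_{L^2}$, which is invariant. The key point is that the defocusing (positive) term and the focusing (negative) term carry \emph{different} time weights, and one needs a lower bound on $\mathcal E(t)$ uniform in $t\in(0,T]$. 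Here is where the variational machinery of Theorem~\ref{mainthm} enters: I would fix $t$, rescale $v$ to kill the time weights, and apply Theorem~\ref{mainthm} with the $(\alpha,\beta,\gamma)$ dictated by the weights; the delicate bookkeeping of the two weights $|t|^{-\delta(q)}$, $|t|^{-\delta(p)}$ is exactly what produces the specific constant $\beta = (1-\delta(q))(1-\delta(p))^{-\delta(q)/\delta(p)}/(q+1)$ appearing in the definition of $\rho^\star$. Concretely, using Young's inequality to split the focusing term as a small multiple of $\|\nabla v\|_{L^2}^2$-controllable quantity plus a term absorbable by the defocusing one — optimizing the split in terms of $\delta(q)/\delta(p)$ — one shows that if $\|\psi_0\|_{L^2}<\rho^\star$ then a modified functional $E^{\alpha,\beta,\gamma}$ is nonnegative on the relevant $L^2$-sphere, hence $\mathcal E(t)\ge 0$ (or bounded below) for all $t\in(0,T]$. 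Combined with mass conservation, this yields a uniform $H^1$ bound, i.e.\ $v\in L^\infty((0,T];H^1)$.

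The second step is to upgrade this a priori bound to the full statement. First, control of the weighted momentum/variance gives $v\in L^\infty((0,T];\Sigma)$ by a Gronwall argument using the integrability of $|t|^{-\delta(q)}, |t|^{-\delta(p)}$ near $0$; equivalently, in the original variables one gets a bound on $\|\psi(t)\|_{H^1}$ and on the pseudo-conformal quantity for all $t$, which is the standard route to global well-posedness in $\Sigma$ via Proposition~\ref{prop:lwp}. Then a Duhamel/Strichartz estimate on $(0,T]$ for the transformed equation, using the a priori $L^\infty_t\Sigma$ bound to close, shows that $\int_0^T \|U(T-s)g_t(v(s))\|_{L^2}\,ds<\infty$ with the weighted nonlinearity $g_t$ having $L^1_t$-integrable coefficients; Cauchy's criterion then produces $v_0=\lim_{t\to 0^+}v(t)$ in $L^2$, which transforms back to the scattering state $\psi_+$ (and symmetrically $\psi_-$ for $t\to 0^-$).

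The main obstacle I expect is \emph{Step~1}, the uniform lower bound on the pseudo-conformal energy: the two time weights $|t|^{-\delta(q)}$ and $|t|^{-\delta(p)}$ are not comparable and degenerate as $t\to0$, so one cannot directly invoke a single instance of Theorem~\ref{mainthm}. The resolution is to perform, at each fixed $t$, a scaling $v(t,x)\mapsto \mu^{d/2}v(t,\mu x)$ (or a mass-preserving dilation) chosen so that after rescaling the two weighted nonlinear terms are brought to a form in which Theorem~\ref{mainthm} applies with a \emph{$t$-independent} triple $(\alpha,\beta,\gamma)$ — and checking that the worst case over $t\in(0,\infty)$ of the resulting constant is precisely $\beta=(1-\delta(q))(1-\delta(p))^{-\delta(q)/\delta(p)}/(q+1)$. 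Verifying that this optimization is attained (or sharp in the limit) and that $\rho^\star<\rho_{0,E}$ — so that the hypothesis is genuinely a mass-smallness condition strictly below the ground-state threshold — is the crux of the argument; everything else is a careful but routine combination of pseudo-conformal identities, Strichartz estimates, and Gronwall.
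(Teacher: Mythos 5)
Your overall strategy --- pseudo-conformal transformation plus the variational Theorem~\ref{mainthm} applied after a mass-preserving rescaling that absorbs the time weights --- is the right one, and your final Duhamel/Cauchy-criterion step is a workable alternative to the paper's Tsutsumi--Yajima endgame (the paper instead bounds $\partial_\tau\varphi$ in $H^{-2}$, extracts a weak $L^2$ limit, and upgrades it to strong convergence by pairing against $\varphi(\tau')$). However, there is a genuine gap at the heart of your Step~1. You argue: the energy $\mathcal E(t)$ is nonnegative on the relevant mass sphere, hence bounded below, hence (with mass conservation) $v\in L^\infty_t H^1$. This is a non sequitur in two ways. First, a \emph{lower} bound on an energy containing a negative term does not control $\|\nabla v\|_{L^2}$ from above; for that you need an \emph{upper} bound on the energy together with a coercivity statement. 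Second, and more fundamentally, $\mathcal E(t)$ is \emph{not conserved}: the time-dependent weights $|t|^{-\delta(q)}$, $|t|^{-\delta(p)}$ destroy conservation, so there is no a priori upper bound $\mathcal E(t)\le\mathcal E(t_0)$ to invoke. The missing idea is the paper's family of \emph{modified} energies $E_A(\tau,\varphi)=\tfrac{(1-\tau)^A}{2}\|\nabla\varphi\|_2^2+\tfrac{(1-\tau)^{A-\delta(q)}}{q+1}\|\varphi\|_{q+1}^{q+1}-\tfrac{(1-\tau)^{A-\delta(p)}}{p+1}\|\varphi\|_{p+1}^{p+1}$ with $A\in(\delta(q),1)$, satisfying $\tfrac{d}{d\tau}E_A+R_A=0$; the variational theorem is applied (after rescaling) to the \emph{correction term} $R_A$ to show $R_A\ge0$ below a mass $\rho_1(A)$, which yields the monotonicity $E_A(\tau)\le E_A(0)$. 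The specific constant $(1-\delta(q))(1-\delta(p))^{-\delta(q)/\delta(p)}/(q+1)$ in $\rho^\star$ arises as the limit $A\to1^-$ of the coefficients $(A-\delta(q))(A-\delta(p))^{-\delta(q)/\delta(p)}/(q+1)$ in the positivity condition for $R_A$, together with a monotonicity-in-$A$ argument --- not from optimizing a Young-type splitting of the focusing term as you suggest.

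A second, related issue: even granting $E_A(\tau)\le E_A(0)$, one must still extract bounds on the three individual norms from a functional that contains a negative term. The paper does this by a further $\varepsilon$-splitting, $E_A=\varepsilon(\text{all three terms with }+\text{ signs})+E_A^\star$, and shows $E_A^\star\ge0$ below the threshold by another application of Theorem~\ref{mainthm} (with coefficient $\tfrac{1+\varepsilon}{1-\varepsilon}\cdot\tfrac{1}{p+1}$ on the focusing term) together with the continuity of the threshold mass in the parameters (Proposition~\ref{monotonprop2}). The output is not a uniform $H^1$ bound as you claim, but the controlled growth $\|\nabla_\xi\varphi(\tau)\|_2^2\lesssim(1-\tau)^{-A}$, $\|\varphi(\tau)\|_{q+1}^{q+1}\lesssim(1-\tau)^{\delta(q)-A}$, $\|\varphi(\tau)\|_{p+1}^{p+1}\lesssim(1-\tau)^{\delta(p)-A}$; these rates (with $A<1$) are exactly what make the Duhamel integral converge, and the final exponent bookkeeping uses $q,p>1+\tfrac2d$. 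Without the modified energy, its monotonicity, and the coercive splitting, your Step~1 cannot be closed as written.
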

\begin{remark}
 In view of the content of Theorem \ref{mainthm-sca}, it is natural to ask whether the threshold mass $\rho^\star$ is optimal or not. Alternatively, we may ask if we can push $\rho^\star$ up to $\rho_{0,E}$ defined in Theorem \ref{mainthmforE}. We conjecture that this is not possible as we suspect the existence of standing waves which are not global minima for the energy functional, and have a mass strictly smaller than $\rho_{0,E}$. In the Appendix we show that the threshold mass $\rho_{0,E}$ defined in Theorem \ref{mainthmforE}, $\rho_{0,SW}$ defined in \eqref{defenergsw} and $\rho^{\star}$ defined in Theorem \ref{mainthmforE}, are ordered as $
\rho^{\star}<\rho_{0, SW}<\rho_{0,E}$.
\end{remark}

 Let us briefly illustrate the main ideas behind the proof of Theorem \ref{mainthm-sca}. 
First, we  introduce the pseudo-conformal transformation 
\[
\varphi(\tau,\xi) =    \psi\left(\frac{\tau}{1-\tau},\frac{\xi}{1-\tau}\right) (1-\tau)^{-d/2} e^{-\frac{i  |\xi|^2}{4(1-\tau)}}.
\]
 It is  known that  $e^{-it\Delta_x}\psi(t)$ has a strong limit in the $L^2$-topology as $t\to+\infty$ if and only if $\varphi(\tau)$ has a strong limit in the $L^2$-topology as $\tau\to1^-$, see Proposition \ref{sca-equi}. In the new variables, $\varphi(\tau)$ is a solution to the following NLS with time-dependent coefficients
\[
i \partial_\tau \varphi +  \Delta_\xi \varphi =  (1-\tau)^{-\delta(q)} |\varphi(\tau)|^{q-1}\varphi(\tau)
     - (1-\tau)^{-\delta(p)}|\varphi(\tau)|^{p-1}\varphi(\tau), 
\]
and to perform a Tsutsumi and Yajima argument \cite{TY84} it will be crucial to have suitable controls in terms of $(1-\tau$) for the quantities  $\|\nabla_\xi \varphi(\tau)\|_{L^2(\mathbb R^d)}$, $\|\varphi(\tau)\|_{L^{q+1}(\mathbb R^d)}$, and $\|\varphi(\tau)\|_{L^{p+1}(\mathbb R^d)}$. 
At this point, in order to control the aforementioned norms, we introduce the modified energy 
\begin{equation*}
    \begin{aligned}
        E_A(\tau,\varphi(\tau)) &:= \frac{(1-\tau)^A}{2}\|\nabla_\xi \varphi(\tau)\|_{L^2(\mathbb R^d)}^2
        +\frac{(1-\tau)^{A-\delta(q)}}{q+1} \|\varphi(\tau)\|_{L^{q+1}(\mathbb R^d)}^{q+1} \\
       & - \frac{(1-\tau)^{A-\delta(p)}}{p+1} \|\varphi(\tau)\|_{L^{p+1}(\mathbb R^d)}^{p+1}.
    \end{aligned}
\end{equation*}
The  modified energy above  is parametrized by a non-negative real number $A$ that satisfies $\delta(q)<A<1$ (which will be  chosen properly later  in the paper). Specifically, the modified energy $E_A$ is not conserved along the flow, so the  crucial point will be to exploit a bound on  the time evolution of the modified energy in terms of $E_A(0,\varphi(0))$, provided that the Cauchy datum has $L^2$-norm strictly lower than a certain mass $\rho_1(A)$.   The variational analysis developed in Theorem \ref{mainthm} will be the key tool for this purpose. Once the time evolution of the modified energy is controlled, we will get the estimates
\begin{equation*}
    \begin{aligned}
        \|\nabla_\xi \varphi(\tau)\|_{L^2(\mathbb R^d)}^2&\lesssim (1-\tau)^{-A},\\
\|\varphi(\tau)\|_{L^{q+1}(\mathbb R^d)}^{q+1}&\lesssim (1-\tau)^{\delta(q)-A},\\
\|\varphi(\tau)\|_{L^{p+1}(\mathbb R^d)}^{p+1}&\lesssim (1-\tau)^{\delta(p)-A}.
    \end{aligned}
\end{equation*}
that enable us to prove the long-time dynamical results by exploiting the  Tsutsumi and Yajima strategy. We therefore prove scattering below the threshold mass $\rho_1(A)$ parametrized by  $A$.
Eventually, we will show the monotonicity of the mass  $\rho_1(A)$ as $A\rightarrow 1^-$ and we compute the 
largest possible threshold mass given by 
\[
\rho^{\star}=\rho_0\left(\frac{1}{2},\frac{(1-\delta(q))(1-\delta(p))^{-\frac{\delta(q)}{\delta(p)}}}{q+1}, \frac{1}{p+1}\right).
\]

\begin{remark}
    To the best of our knowledge, this is the first $L^2$-scattering result in the full short-range regime with a smallness assumption only on the initial mass. It is worth mentioning that the problem of scattering has been widely treated in recent years when 
    the  nonlinear terms in \eqref{NLS0} (not necessarily in the defocusing-focusing case) are in the mass-supercritical and energy-subcritical/energy-critical interval, and we refer the reader to \cite{BDF24, AIKN13, Luo22, Luo24, Miao13, Miao16, Cheng16, Cheng20, AKN12, TVZ} and references therein. 
\end{remark}

\subsection{Notations} Along this paper, we use the following notation. The Lebesgue spaces $L^p(\mathbb{R}^d)$, $1\leq p< \infty$, are simply denoted by $L^p$ with the corresponding norms denoted by $\|f \|_p=(\int_{\mathbb{R}^d}|f(x)|^p dx)^{1/p}$. Since now on, we  omit the dependence on the whole space in the integrals. For $s\in\mathbb R$, $H^s(\mathbb{R^d}^d)=H^s=(1-\Delta)^{-s/2} L^2 $ are the $L^2$-based Sobolev space  endowed norms $\|f\|_{H^s}^2=\|(1-\Delta)^{s/2}f\|_{2}^2$. In particular, $\|f\|_{H^1}^2=\|\nabla f\|_2^2+\|f\|_2^2$. The $L^2$-pairing is defined as $
     \langle f , g \rangle = \int  f(x) \overline{g}(x) dx.$
 $X \lesssim Y$ stands for the usual shorthand notation for inequalities that involve implicit constants. Specifically, 
$X \lesssim Y$ means that $X \leq C Y$ for some positive constant $C$, and similarly for $X \gtrsim Y$. When both inequalities hold, we use $X\sim Y$. 

\section{The variational problem}
 
As described in the Introduction, our first goal is to find conditions for the existence of minimizers for a general defocusing-focusing energy functional of the type 
\begin{equation}\label{ener-general}
E^{\alpha, \beta, \gamma}(u):=\alpha\|\nabla u\|_{2}^2+\beta \|u\|^{q+1}_{q+1}-\gamma\|u\|^{p+1}_{p+1},
\end{equation}
constrained on the manifold $S_{\rho}:=\{ u\in H^1 \ : \ \|u\|_2=\rho\}$, with positive constants $\alpha,\beta, \gamma$ and $1<q<p<1+\frac{4}{d}$.
Therefore, we look at the problem 
\[
\mathrm{I}_{\rho^2}^{\alpha, \beta,\gamma}:=\inf_{S_{\rho}}E^{\alpha, \beta,\gamma}(u).
\]
We also introduce the Pohozaev functional 
\[
G^{\alpha, \beta, \gamma}(u):=2 \alpha\|\nabla u\|_{2}^2+\frac{d(q-1)}{2}\beta \|u\|^{q+1}_{q+1}-\frac{d(p-1)}{2}\gamma\|u\|^{p+1}_{p+1},
\]
and we recall that for all critical points of the energy functional $E^{\alpha, \beta, \gamma}(u)$ we have $G^{\alpha, \beta, \gamma}(u)=0.$
From now on, having fixed $\alpha, \beta, \gamma$, we simplify the notation for the reader's convenience using $E(u)$ instead of $E^{\alpha, \beta, \gamma}(u)$, $G(u)$ instead of $G^{\alpha, \beta, \gamma}(u)$, $\mathrm{I}_{\rho^2}$ instead of $\mathrm{I}_{\rho^2}^{\alpha, \beta,\gamma}$, and $\rho_0$ instead of $\rho_0(\alpha,\beta, \gamma)$.

First, we recall the following facts, for which we refer to the work by Lions \cite{Lions84}. 
\begin{lemma}\label{lemmainiz}
Consider the function $\rho\rightarrow \mathrm{I}_{\rho^2}$. The following properties hold true: \\
\textup{(i)}
 $\mathrm{I}_{\rho^2}$ is weakly subadditive, i.e., for any  $0<\mu<\rho$, 
\[\mathrm{I}_{\rho^2}\leq I_{\mu^2}+\mathrm{I}_{\rho^2-\mu^2};\]
\textup{(ii)} if $\mathrm{I}_{\rho^2}$ is strongly  subadditive, i.e., if  for any $0<\mu<\rho$
\[\mathrm{I}_{\rho^2}<I_{\mu^2}+\mathrm{I}_{\rho^2-\mu^2}, \]
then the infimum is attained. 
\end{lemma}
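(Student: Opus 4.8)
The statement to prove is Lemma~\ref{lemmainiz}, the weak subadditivity and the standard ``strict subadditivity implies compactness'' dichotomy for the constrained minimization problem $\mathrm{I}_{\rho^2} = \inf_{S_\rho} E(u)$.

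\medskip

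\textbf{Plan for part (i).} The plan is to prove weak subadditivity by a scaling (dilation in the values, not the spatial variable) argument. Given $0 < \mu < \rho$, I would like to ``concatenate'' near-optimizers at masses $\mu$ and $\sqrt{\rho^2 - \mu^2}$, but since $E$ is not translation invariant in a way that decouples the interaction term, the cleanest route is the homogeneity trick: for $u \in S_1$ and $t>0$, set $u_t := t u$, so $u_t \in S_t$ and
\[
E(u_t) = \alpha t^2 \|\nabla u\|_2^2 + \beta t^{q+1}\|u\|_{q+1}^{q+1} - \gamma t^{p+1}\|u\|_{p+1}^{p+1}.
\]
Hence $\mathrm{I}_{t^2} = \inf_{u \in S_1} \big( \alpha t^2 \|\nabla u\|_2^2 + \beta t^{q+1}\|u\|_{q+1}^{q+1} - \gamma t^{p+1}\|u\|_{p+1}^{p+1}\big)$. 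From this representation I would check that the map $\theta \mapsto \mathrm{I}_\theta$ (with $\theta = t^2$) is concave-like enough to yield $\mathrm{I}_{\rho^2} \le \mathrm{I}_{\mu^2} + \mathrm{I}_{\rho^2 - \mu^2}$; alternatively, and more robustly, I would use the spatial-translation argument: take $v_1$ nearly optimal on $S_\mu$, $v_2$ nearly optimal on $S_{\sqrt{\rho^2-\mu^2}}$, both compactly supported (by density), translate $v_2$ far away so that $\mathrm{supp}\, v_1 \cap \mathrm{supp}\, v_2(\cdot - y) = \emptyset$, and set $w = v_1 + v_2(\cdot - y) \in S_\rho$; then all three terms in $E$ are exactly additive, giving $\mathrm{I}_{\rho^2} \le E(w) = E(v_1) + E(v_2) \le \mathrm{I}_{\mu^2} + \mathrm{I}_{\rho^2-\mu^2} + 2\varepsilon$, and let $\varepsilon \to 0$. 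I would present this translation argument, since it needs no structural assumption on $E$ beyond the $L^2$-locality of each term.

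\medskip

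\textbf{Plan for part (ii).} This is the concentration-compactness argument of Lions \cite{Lions84}. Let $(u_n) \subset S_\rho$ be a minimizing sequence for $\mathrm{I}_{\rho^2}$. First I would observe that $(u_n)$ is bounded in $H^1$: the leading focusing term is $L^{p+1}$ with $p+1 < 1 + \tfrac{4}{d} + 1$, i.e.\ mass-subcritical, so by Gagliardo--Nirenberg $\gamma\|u_n\|_{p+1}^{p+1} \lesssim \|\nabla u_n\|_2^{d(p-1)/2}$ with exponent $d(p-1)/2 < 2$; combined with fixed mass $\rho$ and the coercive term $\alpha\|\nabla u_n\|_2^2$, the energy $E(u_n)$ controls $\|\nabla u_n\|_2$ from above. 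Then I apply the concentration-compactness lemma to the sequence of densities $|u_n|^2$ (total mass $\rho^2$): one of \emph{vanishing}, \emph{dichotomy}, \emph{compactness} occurs. Vanishing is excluded because it would force $\|u_n\|_{q+1}, \|u_n\|_{p+1} \to 0$ (by the vanishing lemma, using $H^1$-boundedness and $2 < q+1,p+1 < 2^*$), hence $E(u_n) \to \alpha \liminf\|\nabla u_n\|_2^2 \ge 0$; I must rule out $\mathrm{I}_{\rho^2}=0$ being realized only in the vanishing limit — but the hypothesis of strict subadditivity is precisely what we will use, via dichotomy, so I would argue: if vanishing held, then $\mathrm{I}_{\rho^2}\ge 0$, and separately one shows $\mathrm{I}_{\rho^2}<0$ or uses strict subadditivity with $\mu \to 0$ together with $\mathrm{I}_{\mu^2}\to 0$ and continuity to derive a contradiction with $\mathrm{I}_{\rho^2}\ge \mathrm{I}_{\rho^2}$ trivially — so the cleaner statement is: vanishing $\Rightarrow \mathrm{I}_{\rho^2}\ge 0$, while strict subadditivity $\mathrm{I}_{\rho^2} < \mathrm{I}_{\mu^2} + \mathrm{I}_{\rho^2-\mu^2}$ with the weak subadditivity from (i) and $\mathrm{I}_{s}\le 0$ (test with $tu$, $t$ small, to see $\mathrm I_s\le 0$ is generally false here; instead I just use: vanishing gives $\mathrm{I}_{\rho^2}=\lim E(u_n)\ge 0$, and if $\mathrm{I}_{\rho^2}=0$ then strict subadditivity reads $0 < \mathrm{I}_{\mu^2}+\mathrm{I}_{\rho^2-\mu^2}$, which contradicts weak subadditivity giving $0=\mathrm I_{\rho^2}\le \mathrm I_{\mu^2}+\mathrm I_{\rho^2-\mu^2}$ only if one of them is negative; this needs care). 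I would therefore handle vanishing by the standard trick: vanishing $\Rightarrow$ $\mathrm{I}_{\rho^2}=\lim E(u_n) = \alpha\lim\|\nabla u_n\|_2^2$, and by part (i) plus the scaling lower bound $\mathrm{I}_{\rho^2} \le \mathrm{I}_{\mu^2} + \mathrm{I}_{(\rho^2-\mu^2)}$ iterated, compare with strict subadditivity to force a contradiction. Dichotomy at level $\lambda \in (0,\rho^2)$ produces, after the usual cut-off, a splitting with $\liminf E(u_n) \ge \mathrm{I}_{\lambda} + \mathrm{I}_{\rho^2-\lambda}$, contradicting $\mathrm{I}_{\rho^2} < \mathrm{I}_{\lambda}+\mathrm{I}_{\rho^2-\lambda}$. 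Hence compactness holds: up to translations $u_n(\cdot + y_n) \rightharpoonup u$ weakly in $H^1$ with $\|u\|_2 = \rho$, so the weak limit is in $S_\rho$, and by weak lower semicontinuity of $\|\nabla\cdot\|_2$ together with the (translation-adjusted) strong $L^{q+1}, L^{p+1}$ convergence coming from local compactness, $E(u) \le \liminf E(u_n(\cdot+y_n)) = \mathrm{I}_{\rho^2}$, so $u$ is a minimizer.

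\medskip

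\textbf{Main obstacle.} The routine part is the $H^1$-bound and the vanishing/Rellich arguments; the delicate point is the dichotomy step — performing the cut-off $u_n = u_n^{(1)} + u_n^{(2)} + (\text{small})$ so that each of the three terms of $E$ (including the cross terms in $\|u_n\|_{q+1}^{q+1}$ and $\|u_n\|_{p+1}^{p+1}$, which are \emph{not} additive under sums) is controlled up to $o(1)$, using that the supports of $u_n^{(1)}$ and $u_n^{(2)}$ separate to infinity, so that $\|u_n^{(1)}+u_n^{(2)}\|_r^r = \|u_n^{(1)}\|_r^r + \|u_n^{(2)}\|_r^r + o(1)$ for $r = q+1, p+1$; this is the technical heart, and it is exactly where strict subadditivity is consumed. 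I would also be careful that the masses $\|u_n^{(1)}\|_2^2 \to \lambda$ and $\|u_n^{(2)}\|_2^2 \to \rho^2 - \lambda$ so that the lower semicontinuity estimate can invoke $\mathrm{I}_\lambda$ and $\mathrm{I}_{\rho^2-\lambda}$ at the correct mass levels, using continuity of $s \mapsto \mathrm{I}_s$ (itself a consequence of (i) and the scaling representation above).
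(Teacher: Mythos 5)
The paper does not actually prove Lemma \ref{lemmainiz}: it is stated as a recalled fact with a pointer to Lions \cite{Lions84}, so there is no in-paper argument to compare against, and what you have written is a reconstruction of the standard concentration--compactness proof --- which is indeed the intended one. Part (i) via far-apart translations of compactly supported near-minimizers is correct and is the standard route; the ``homogeneity trick'' with $u_t=tu$ that you float first should simply be dropped, as $t\mapsto \mathrm{I}_{t^2}$ carries no usable concavity there. In part (ii) the $H^1$-boundedness, the dichotomy step (with the $o(1)$-additivity of the $L^{q+1}$ and $L^{p+1}$ terms under separating supports, and continuity of $s\mapsto \mathrm{I}_{s^2}$ to land on the right mass levels), and the compactness step are the standard ones and are fine. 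The only place you visibly wobble is the exclusion of vanishing, and the clean repair is exactly the ingredient the paper supplies two results later (Proposition \ref{propasynt}): $\mathrm{I}_{s^2}\le 0$ for \emph{every} $s>0$. This is true, but it is proved not by testing with $tu$ (which changes the mass, as you correctly suspect) but with the mass-preserving dilation $u_\lambda=\lambda^{d/2}u(\lambda x)$ and $\lambda\to 0^+$, under which all three terms of $E$ tend to zero while the constraint is preserved. Granting this, strict subadditivity gives $\mathrm{I}_{\rho^2}<\mathrm{I}_{\mu^2}+\mathrm{I}_{\rho^2-\mu^2}\le 0$, whereas vanishing would force $\mathrm{I}_{\rho^2}=\lim E(u_n)\ge \alpha\liminf\|\nabla u_n\|_2^2\ge 0$, a contradiction. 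With that one ingredient inserted, your argument closes.
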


In order to prove Theorem \ref{mainthm}, we introduce the auxiliary problem \[\tilde E^{\alpha,\gamma}(u)=\tilde E(u):=\alpha\|\nabla u\|_{2}^2-\gamma\|u\|_{p+1}^{p+1},\]
and we define
\[
\mathrm{J}_{\rho^2}=\inf_{S_{\rho}}\tilde E(u).
\]

We start with the following negativity property of $\mathrm{J}_{\rho^2}$.
\begin{prop}\label{propomog}
For any $\rho>0$, $\mathrm{J}_{\rho^2}=\rho^{\frac{4(p+1)-2d(p-1)}{4+d-dp}}\mathrm{J}_1<0$. 
\end{prop}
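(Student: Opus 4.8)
The plan is to exploit the scaling structure of $\tilde E$. For $\lambda > 0$ and $u \in S_\rho$, consider the dilation $u_\lambda(x) := \lambda^{d/2} u(\lambda x)$, which preserves the $L^2$-norm, so $u_\lambda \in S_\rho$ as well. A direct computation gives $\|\nabla u_\lambda\|_2^2 = \lambda^2 \|\nabla u\|_2^2$ and $\|u_\lambda\|_{p+1}^{p+1} = \lambda^{\frac{d(p-1)}{2}} \|u\|_{p+1}^{p+1}$, hence
\[
\tilde E(u_\lambda) = \alpha \lambda^2 \|\nabla u\|_2^2 - \gamma \lambda^{\frac{d(p-1)}{2}} \|u\|_{p+1}^{p+1}.
\]
Since $1 < p < 1 + \tfrac{4}{d}$ we have $0 < \tfrac{d(p-1)}{2} < 2$, so as $\lambda \to 0^+$ the negative term dominates (for any fixed $u$ with $\|u\|_{p+1} \neq 0$, which holds for every nonzero $u \in S_\rho$), whence $\tilde E(u_\lambda) < 0$ for $\lambda$ small. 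Taking the infimum over $S_\rho$ shows $\mathrm{J}_{\rho^2} < 0$ for every $\rho > 0$.

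To pin down the exact power of $\rho$, I would use the full two-parameter scaling $u \mapsto v$, where $v$ ranges over $S_1$ and $u = \rho\, \lambda^{d/2} v(\lambda \cdot)$ for a suitable $\lambda = \lambda(\rho)$ chosen to absorb the $\rho$-dependence. More concretely: given any $w \in S_\rho$, write $w = \rho\, \tilde w$ with $\tilde w \in S_1$; then
\[
\tilde E(\rho \tilde w) = \alpha \rho^2 \|\nabla \tilde w\|_2^2 - \gamma \rho^{p+1} \|\tilde w\|_{p+1}^{p+1},
\]
and now rescale $\tilde w \mapsto \tilde w_\mu$ with $\mu$ to be chosen. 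One finds
\[
\tilde E\big((\rho \tilde w)_\mu\big) = \alpha \rho^2 \mu^2 \|\nabla \tilde w\|_2^2 - \gamma \rho^{p+1} \mu^{\frac{d(p-1)}{2}} \|\tilde w\|_{p+1}^{p+1}.
\]
Choosing $\mu = \mu(\rho)$ so that the two monomials in $\rho$ become proportional — i.e. $\rho^2 \mu^2 = c\, \rho^{p+1}\mu^{\frac{d(p-1)}{2}}$ for a $\rho$-independent relation — forces $\mu = \rho^{\theta}$ with $\theta(2 - \tfrac{d(p-1)}{2}) = p - 1$, that is $\theta = \frac{2(p-1)}{4 - d(p-1)}$. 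With this choice both terms scale by the common factor $\rho^{2 + 2\theta} = \rho^{\frac{2(4 - d(p-1)) + 4(p-1)}{4 - d(p-1)}} = \rho^{\frac{4(p+1) - 2d(p-1)}{4 - d(p-1)}}$ (using $4 + d - dp = 4 - d(p-1)$ for the stated exponent), and the map $\tilde w \mapsto (\rho \tilde w)_{\rho^\theta}$ is a bijection of $S_1$ onto $S_\rho$. Taking the infimum on both sides yields $\mathrm{J}_{\rho^2} = \rho^{\frac{4(p+1)-2d(p-1)}{4+d-dp}} \mathrm{J}_1$, and combining with the first paragraph gives $\mathrm{J}_1 < 0$.

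The computation is essentially routine; the only point that needs a little care is verifying that the bijection between $S_1$ and $S_\rho$ really is exhausted by this combined "multiply-and-dilate" map — but since every element of $S_\rho$ is $\rho$ times an element of $S_1$, and dilation is a bijection of $S_1$ onto itself, this is immediate. I do not anticipate a genuine obstacle here; the statement is a clean consequence of homogeneity, and the main thing is to get the exponent arithmetic right and to note that $p < 1 + \tfrac{4}{d}$ is exactly what guarantees $4 + d - dp > 0$ (so the exponent is a well-defined positive number) and $0 < \tfrac{d(p-1)}{2} < 2$ (so the negativity argument works).
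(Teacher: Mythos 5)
Your proposal is correct and follows essentially the same route as the paper: a mass-preserving dilation to show negativity of the infimum, and then the two-parameter scaling (your ``multiply by $\rho$ then dilate by $\rho^{\theta}$'' map coincides, after reparametrization, with the paper's single scaling $u_{\lambda}=\lambda^{\frac{2}{p-1}}u(\lambda x)$ with $\lambda=\rho^{\frac{2(p-1)}{4-d(p-1)}}$) to extract the exact power of $\rho$ via the bijection between $S_1$ and $S_\rho$. The exponent arithmetic checks out, so nothing further is needed.
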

\begin{proof}
Let us assume that $u$ has $L^2$-unitary norm. We claim that $\mathrm{J}_1<0$. Let us consider the mass-preserving scaling $u_{\lambda}=\lambda^{\frac d2}u(\lambda x)$ so that  $u_\lambda$ remains in $S_1$. We have that 
\[
\tilde E(u_{\lambda})= \alpha\lambda^2\|\nabla u\|_{2}^2-\gamma\lambda^{\frac d2(p-1)}\|u\|_{p+1}^{p+1}.
\]
As for $1<p<1+\frac 4d$ we have $\frac d2(p-1)<2$ and $\mathrm{J}_1\leq \tilde E(u_{\lambda})$ by definition, the claim follows by selecting $\lambda$ sufficiently small. 

\noindent Now, set $u_{\lambda}=\lambda^{\frac{2}{p-1}}u(\lambda x)$.
We then have that $\|u_{\lambda}\|_{2}^2=\lambda^{\frac{4}{p-1}-d}\|u\|_{2}^2=\lambda^{\frac{4}{p-1}-d}$.  By fixing $\lambda=\lambda(\rho) =\rho^{\frac{2(p-1)}{4-d(p-1)}}$
we have that $u_{\lambda}\in S_{\rho}$. For this choice of $\lambda$, the energy becomes $\tilde E(u_{\lambda(\rho)})=\lambda(\rho)^{\frac{2(p+1)-d(p-1)}{p-1}}\tilde E(u)=\rho^{\frac{4(p+1)-2d(p-1)}{4+d-dp}}\tilde E(u)$.
As the scaling map between  $S_1$ and $S_{\rho}$ is a bijection, $\mathrm{J}_{\rho^2}=\rho^{\frac{4(p+1)-2d(p-1)}{4+d-dp}}\mathrm{J}_1<0$, and the proof is done. 
\end{proof}
As a consequence, we can deduce the non-positivity of the function $\mathrm{I}_{\rho^2}$ and its decay property as $\rho\to0^+$.

\begin{prop}\label{propasynt}
$\mathrm{J}_{\rho^2}\leq \mathrm{I}_{\rho^2}\leq0$ and  $\displaystyle \lim_{\rho\rightarrow 0^+}\frac{\mathrm{I}_{\rho^2}}{\rho^2}=0$.
\end{prop}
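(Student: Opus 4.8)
The plan is to prove the chain $\mathrm{J}_{\rho^2}\leq \mathrm{I}_{\rho^2}\leq 0$ first, and then to extract the sharper decay rate $\mathrm{I}_{\rho^2}=o(\rho^2)$ as $\rho\to 0^+$ by a scaling argument. For the first inequality $\mathrm{J}_{\rho^2}\leq\mathrm{I}_{\rho^2}$, I would simply observe that for every $u\in S_\rho$ one has $E(u)=\alpha\|\nabla u\|_2^2+\beta\|u\|_{q+1}^{q+1}-\gamma\|u\|_{p+1}^{p+1}\geq \alpha\|\nabla u\|_2^2-\gamma\|u\|_{p+1}^{p+1}=\tilde E(u)$, since $\beta>0$ and $\|u\|_{q+1}^{q+1}\geq 0$; taking the infimum over $S_\rho$ on both sides gives $\mathrm{J}_{\rho^2}\leq\mathrm{I}_{\rho^2}$. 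For the inequality $\mathrm{I}_{\rho^2}\leq 0$, I would fix any $u\in S_\rho$ and apply the mass-preserving scaling $u_\lambda=\lambda^{d/2}u(\lambda x)$, which stays in $S_\rho$; then $E(u_\lambda)=\alpha\lambda^2\|\nabla u\|_2^2+\beta\lambda^{\frac d2(q-1)}\|u\|_{q+1}^{q+1}-\gamma\lambda^{\frac d2(p-1)}\|u\|_{p+1}^{p+1}$, and since $1<q<p<1+\frac4d$ forces $0<\frac d2(q-1)<\frac d2(p-1)<2$, the dominant term as $\lambda\to 0^+$ among the three exponents $2$, $\frac d2(q-1)$, $\frac d2(p-1)$ is governed by the smallest, so all three terms tend to $0$; hence $E(u_\lambda)\to 0$ and $\mathrm{I}_{\rho^2}\leq 0$.

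For the decay statement, I would combine the already-established bound $\mathrm{J}_{\rho^2}\leq\mathrm{I}_{\rho^2}\leq 0$ with the explicit homogeneity of $\mathrm{J}_{\rho^2}$ from Proposition \ref{propomog}. Dividing by $\rho^2$, we get
\[
\frac{\mathrm{J}_1\,\rho^{\frac{4(p+1)-2d(p-1)}{4+d-dp}}}{\rho^2}\leq \frac{\mathrm{I}_{\rho^2}}{\rho^2}\leq 0,
\]
so it suffices to check that the exponent $\frac{4(p+1)-2d(p-1)}{4+d-dp}-2$ is strictly positive, which makes the left-hand side tend to $0$ as $\rho\to 0^+$ and forces $\mathrm{I}_{\rho^2}/\rho^2\to 0$ by squeezing. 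A direct computation gives $\frac{4(p+1)-2d(p-1)}{4+d-dp}-2=\frac{4(p+1)-2d(p-1)-2(4+d-dp)}{4+d-dp}=\frac{4p-4}{4+d-dp}=\frac{4(p-1)}{4-d(p-1)}$, and since $1<p<1+\frac4d$ both numerator and denominator are strictly positive, so the exponent is positive, as needed. (Equivalently, one can rerun the scaling argument of Proposition \ref{propomog} directly on $\mathrm{I}$: using $u_{\lambda(\rho)}$ with $\lambda(\rho)=\rho^{\frac{2(p-1)}{4-d(p-1)}}\to 0$, the kinetic and $L^{q+1}$ terms of $E(u_{\lambda(\rho)})$ scale with a strictly higher power of $\rho$ than $\rho^2$, giving $E(u_{\lambda(\rho)})=o(\rho^2)$ and hence $\mathrm{I}_{\rho^2}\leq o(\rho^2)$; combined with $\mathrm{I}_{\rho^2}\geq\mathrm{J}_{\rho^2}=o(\rho^2)$ this concludes.)

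I do not expect any serious obstacle here: the statement is essentially a bookkeeping exercise with the two scalings already used in the proof of Proposition \ref{propomog}. The only point requiring mild care is verifying the sign of the relevant exponent $\frac{4(p-1)}{4-d(p-1)}$, i.e., confirming that the homogeneity degree of $\mathrm{J}_{\rho^2}$ in $\rho$ exceeds $2$ precisely in the mass-subcritical range $p<1+\frac4d$; this is exactly where the subcriticality hypothesis enters, and it is what guarantees that the negative ground state energy, while present for all $\rho>0$, decays faster than $\rho^2$ near the origin.
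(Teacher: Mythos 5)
Your argument is correct and follows essentially the same route as the paper: $\mathrm{J}_{\rho^2}\leq\mathrm{I}_{\rho^2}$ from the positivity of $\beta$, $\mathrm{I}_{\rho^2}\leq 0$ via the mass-preserving scaling $u_\lambda=\lambda^{d/2}u(\lambda x)$ with $\lambda\to 0^+$, and the decay by squeezing between $\mathrm{J}_{\rho^2}=\rho^{\frac{4(p+1)-2d(p-1)}{4+d-dp}}\mathrm{J}_1$ and $0$, using that the homogeneity exponent exceeds $2$. Your explicit verification that the exponent gap equals $\frac{4(p-1)}{4-d(p-1)}>0$ is a welcome detail the paper leaves implicit, but the proof is the same.
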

\begin{proof}
The positivity of $\beta$ directly implies  $\mathrm{J}_{\rho^2}\leq \mathrm{I}_{\rho^2}$. 
Take now $u\in S_{\rho}$ and   $u_{\lambda}=\lambda^{\frac d2}u(\lambda x)$, so that $u_{\lambda}\in S_{\rho}$ for all $\lambda>0$. We have
\[E(u_{\lambda})=\alpha\lambda^2 \|\nabla u\|_{2}^2+\beta\lambda^{\frac d2 (q-1)}\|u\|_{q+1}^{q+1}-\gamma\lambda^{\frac d2(p-1)}\|u\|_{p+1}^{p+1},\]
then $E(u_{\lambda})\rightarrow 0$ for $\lambda\to0^+$, and hence  $ \mathrm{I}_{\rho^2}\leq 0$. 
 Proposition \ref{propomog} and  $\mathrm{I}_{\rho^2}\leq0$ yield the decay property   $ \mathrm{I}_{\rho^2}=o(\rho^2)$ as $\rho\to0^+$, since $\frac{4(p+1)-2d(p-1)}{4+d-dp}>2$ for $p>1$.
\end{proof}
\begin{lemma}
    The function $\rho\mapsto \mathrm{I}_{\rho^2}$ is continuous. 
\end{lemma}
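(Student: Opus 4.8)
The plan is to establish continuity of $\rho\mapsto \mathrm{I}_{\rho^2}$ on $(0,\infty)$ by combining the already-proven weak subadditivity (Lemma \ref{lemmainiz}(i)) with a simple rescaling argument that lets us transfer test functions between nearby $L^2$-spheres at arbitrarily small cost. First I would fix $\rho>0$ and a sequence $\rho_n\to\rho$, and reduce to proving $\limsup_n \mathrm{I}_{\rho_n^2}\leq \mathrm{I}_{\rho^2}$ and $\liminf_n \mathrm{I}_{\rho_n^2}\geq \mathrm{I}_{\rho^2}$ separately. For the $\limsup$ bound: given $\varepsilon>0$, pick $u\in S_\rho$ with $E(u)\leq \mathrm{I}_{\rho^2}+\varepsilon$, and set $u_n:=(\rho_n/\rho)\,u$, which lies in $S_{\rho_n}$. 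Since $E(u_n)=\frac{\rho_n^2}{\rho^2}\alpha\|\nabla u\|_2^2+\frac{\rho_n^{q+1}}{\rho^{q+1}}\beta\|u\|_{q+1}^{q+1}-\frac{\rho_n^{p+1}}{\rho^{p+1}}\gamma\|u\|_{p+1}^{p+1}\to E(u)$ as $n\to\infty$, we get $\limsup_n \mathrm{I}_{\rho_n^2}\leq \limsup_n E(u_n)=E(u)\leq \mathrm{I}_{\rho^2}+\varepsilon$, and letting $\varepsilon\to0$ finishes this direction. Note this half does not even use subadditivity, only the definition of the infimum and the scaling behaviour of each term of $E$.

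For the $\liminf$ bound the cleanest route is to use monotonicity-type consequences of weak subadditivity together with the upper semicontinuity already obtained. Indeed, from Lemma \ref{lemmainiz}(i) with the decomposition of masses one gets, for $\mu<\rho$, that $\mathrm{I}_{\rho^2}\leq \mathrm{I}_{\mu^2}+\mathrm{I}_{\rho^2-\mu^2}\leq \mathrm{I}_{\mu^2}$ by Proposition \ref{propasynt} (since $\mathrm{I}_{\rho^2-\mu^2}\leq 0$); thus $\rho\mapsto \mathrm{I}_{\rho^2}$ is non-increasing. Combined with the upper semicontinuity established above, a non-increasing upper-semicontinuous function is automatically continuous: for $\rho_n\uparrow\rho$ monotonicity gives $\mathrm{I}_{\rho_n^2}\geq \mathrm{I}_{\rho^2}$ so $\liminf\geq \mathrm{I}_{\rho^2}$; for $\rho_n\downarrow\rho$ monotonicity gives $\mathrm{I}_{\rho_n^2}\leq \mathrm{I}_{\rho^2}$, and the reverse inequality $\liminf_n\mathrm{I}_{\rho_n^2}\geq\mathrm{I}_{\rho^2}$ is exactly the content of the $\limsup$-type estimate applied along a subsequence (one can also argue directly: pick near-optimizers on $S_{\rho_n}$ and rescale them down to $S_\rho$ at vanishing cost, again using that each term scales continuously in the mass). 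A general $\rho_n\to\rho$ is handled by splitting into the subsequences with $\rho_n\leq\rho$ and $\rho_n>\rho$.

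The main (and really only) technical point to be careful about is the uniformity of the rescaling estimate: when transferring a near-minimizer $u_n\in S_{\rho_n}$ to $S_\rho$ (or vice versa), the error terms involve $\|\nabla u_n\|_2^2$, $\|u_n\|_{q+1}^{q+1}$, $\|u_n\|_{p+1}^{p+1}$, which are a priori not controlled. To handle this I would not insist on $u_n$ being near-optimal but rather, after the rescaling $v_n:=(\rho/\rho_n)u_n\in S_\rho$, note that $E(v_n)-E(u_n)=(\,(\rho/\rho_n)^2-1)\alpha\|\nabla u_n\|_2^2+\dots$, and use that near-optimizers can be chosen with bounded energy, hence (because $\alpha\|\nabla u_n\|_2^2 \le E(u_n)+\gamma\|u_n\|_{p+1}^{p+1}$ and $\|u_n\|_{p+1}^{p+1}$ is controlled by Gagliardo–Nirenberg in terms of $\|\nabla u_n\|_2$ and the fixed mass with mass-subcritical exponent) with bounded $H^1$-norm; this gives the needed uniform control of all three terms and makes $E(v_n)-E(u_n)\to0$. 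Alternatively, and more simply, one avoids this entirely by only ever rescaling \emph{fixed} test functions (the $\limsup$ direction) and deriving the $\liminf$ direction purely from monotonicity plus upper semicontinuity as sketched above — that is the route I would write up, since it sidesteps any compactness or uniform-bound discussion and uses only Lemma \ref{lemmainiz}(i), Proposition \ref{propasynt}, and elementary scaling.
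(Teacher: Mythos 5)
Your ``direct'' argument --- rescaling near-optimizers in both directions and controlling the error via a uniform $H^1$-bound, obtained from $E(w_n)\leq \mathrm{I}_{\rho_n^2}+\frac1n\leq\frac1n$ together with the Gagliardo--Nirenberg inequality and the mass-subcriticality of $p$ --- is correct and is essentially the paper's own proof; indeed it supplies exactly the detail the paper compresses into the sentence ``since we are in a mass-subcritical regime, all the terms in the energy functional are uniformly bounded.''

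The problem is the route you say you would actually write up. A non-increasing, upper semicontinuous function is \emph{not} automatically continuous: take $f(\rho)=0$ for $\rho\leq 1$ and $f(\rho)=-1$ for $\rho>1$, which is non-increasing and upper semicontinuous but fails right-continuity at $\rho=1$. Monotonicity plus your $\limsup$ estimate does give left-continuity, but for $\rho_n\downarrow\rho$ both ingredients only bound $\mathrm{I}_{\rho_n^2}$ from \emph{above} by $\mathrm{I}_{\rho^2}$ (and weak subadditivity, Lemma \ref{lemmainiz}, combined with Proposition \ref{propasynt}, likewise only produces upper bounds at the larger mass), so neither yields the needed inequality $\liminf_n\mathrm{I}_{\rho_n^2}\geq\mathrm{I}_{\rho^2}$. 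Your assertion that this reverse inequality ``is exactly the content of the $\limsup$-type estimate applied along a subsequence'' is the error: that estimate reads $\limsup_n\mathrm{I}_{\rho_n^2}\leq\mathrm{I}_{\rho^2}$, which is the wrong direction. To obtain the lower bound one must transfer near-optimizers $w_n\in S_{\rho_n}$ down to $S_\rho$, i.e.\ $\mathrm{I}_{\rho^2}\leq E\bigl((\rho/\rho_n)w_n\bigr)=E(w_n)+o_n(1)\leq\mathrm{I}_{\rho_n^2}+o_n(1)$, and there the uniform-bound discussion you hoped to sidestep is unavoidable. So keep the third paragraph of your proposal and drop the ``more simply'' shortcut.
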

\begin{proof}
 
Let  $\rho_n \rightarrow \rho$.
For every $n \in \mathbb{N}$, let $w_n \in S_{\rho_n}$ such that $E(w_n)\leq  \mathrm{I}_{\rho_n^2}+\frac{1}{n}$. Since we are in a mass-subcritical regime, all the terms in the energy functional are uniformly bounded.  So we easily find
\[
\mathrm{I}_{\rho^2}\leq E\left(\frac{\rho}{\rho_n}w_n\right
)= E (w_n)+o_n(1)\leq \mathrm{I}_{\rho_n^2}+o_n(1).
\]
\noindent On the other hand, given a minimizing sequence $\left\{v_n\right\} \subset S_\rho$ for $\mathrm{I}_{\rho^2}$, we have
\[
\mathrm{I}_{\rho_n^2} \leq E\left(\frac{\rho_n}{\rho} v_n\right)=E(v_n)+o_n(1)=\mathrm{I}_{\rho^2}+o_n(1),
\]
and then by combining with the previous property,  $\lim _{n \rightarrow \infty} \mathrm{I}_{\rho_n^2}=\mathrm{I}_{\rho^2}$.
\end{proof}

 We now demonstrate that if the weak non-negativity of 
$\mathrm{I}_{\rho^2}$ 
 can be strengthened to strict non-negativity, then the existence of minimizers follows. 
\begin{lemma}\label{negimplcompact}
Fix $\rho>0$ and suppose that $\mathrm{I}_{\rho^2}<0$. There exists $u \in S_{\rho}$ such that $E(u)=\mathrm{I}_{\rho^2}.$
\end{lemma}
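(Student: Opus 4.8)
The plan is to reduce everything to Lemma~\ref{lemmainiz}(ii): since the weak subadditivity of $s\mapsto\mathrm{I}_{s}$ is granted by Lemma~\ref{lemmainiz}(i), it suffices to upgrade it to the \emph{strong} form at the single value $s=\rho^2$, i.e.\ to prove
\[
\mathrm{I}_{\rho^2}<\mathrm{I}_{\mu^2}+\mathrm{I}_{\rho^2-\mu^2}\qquad\text{for every }0<\mu<\rho ,
\]
and this is precisely where the hypothesis $\mathrm{I}_{\rho^2}<0$ gets used. I would also note at the outset that, thanks to the Gagliardo--Nirenberg inequality and the mass-subcriticality $p<1+\tfrac4d$, the functional $E$ is coercive on every $S_\rho$, so that minimizing sequences are automatically bounded in $H^1$ and the concentration--compactness machinery packaged in Lemma~\ref{lemmainiz}(ii) applies.

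The key step is a scaling monotonicity for $\mathrm{I}$. Because $E$ is not scale invariant, the spatial dilations used in Proposition~\ref{propomog} are not the right tool; instead I would test with the amplitude rescaling $v:=\sqrt\theta\,u$, which for $\theta\ge1$ maps $S_\sigma$ into $S_{\sqrt\theta\,\sigma}$ and satisfies
\[
E(v)=\alpha\,\theta\,\|\nabla u\|_{2}^{2}+\beta\,\theta^{\frac{q+1}{2}}\|u\|_{q+1}^{q+1}-\gamma\,\theta^{\frac{p+1}{2}}\|u\|_{p+1}^{p+1}\ \le\ \theta^{\frac{p+1}{2}}\,E(u),
\]
where the inequality only uses $1\le\theta\le\theta^{\frac{q+1}{2}}\le\theta^{\frac{p+1}{2}}$, i.e.\ $1<q<p$. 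Taking the infimum over $u\in S_\sigma$ (with $s=\sigma^2$) yields
\[
\mathrm{I}_{\theta s}\ \le\ \theta^{\frac{p+1}{2}}\,\mathrm{I}_{s}\qquad(\theta\ge1,\ s>0),
\]
equivalently $\mathrm{I}_{ts}\ge t^{\frac{p+1}{2}}\mathrm{I}_{s}$ for $t\in(0,1]$; combined with $\mathrm{I}_{s}\le0$ (Proposition~\ref{propasynt}) and $\tfrac{p+1}{2}>1$ this also shows that $s\mapsto\mathrm{I}_{s}$ is non-increasing.

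From here the strict subadditivity follows in one line. Given $0<\mu<\rho$, put $t:=\mu^2/\rho^2\in(0,1)$, so $\mu^2=t\rho^2$ and $\rho^2-\mu^2=(1-t)\rho^2$. Applying the reverse scaling bound to both pieces,
\[
\mathrm{I}_{\mu^2}+\mathrm{I}_{\rho^2-\mu^2}\ \ge\ \Big(t^{\frac{p+1}{2}}+(1-t)^{\frac{p+1}{2}}\Big)\,\mathrm{I}_{\rho^2},
\]
and since $\tfrac{p+1}{2}>1$ gives $\xi^{\frac{p+1}{2}}<\xi$ on $(0,1)$, the bracket lies strictly between $0$ and $1$; as $\mathrm{I}_{\rho^2}<0$ this forces the right-hand side to be strictly larger than $\mathrm{I}_{\rho^2}$. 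Hence $\mathrm{I}_{\mu^2}+\mathrm{I}_{\rho^2-\mu^2}>\mathrm{I}_{\rho^2}$, which is the strong subadditivity, and Lemma~\ref{lemmainiz}(ii) produces the desired minimizer $u\in S_\rho$.

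I expect the main obstacle to be exactly the passage from weak to strict subadditivity: for scale-invariant functionals this is textbook, but here the non-homogeneity of $E$ blocks the usual argument. The fix is the scaling inequality $\mathrm{I}_{\theta s}\le\theta^{(p+1)/2}\mathrm{I}_{s}$ with exponent $\tfrac{p+1}{2}>1$ (a consequence of $p>1$), which, applied to the two fragments of mass below $\rho^2$, where the relevant factor $\xi^{(p+1)/2}$ is strictly smaller than $\xi$, converts the sign condition $\mathrm{I}_{\rho^2}<0$ into a strict gain. Everything else --- finiteness, non-positivity and continuity of $\mathrm{I}$, coercivity of $E$, and the exclusion of vanishing and dichotomy --- is either already established in the preceding propositions or absorbed into Lemma~\ref{lemmainiz}.
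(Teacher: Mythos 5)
Your argument is correct, but it establishes the crucial step by a genuinely different mechanism than the paper. Both proofs reduce the lemma to strict subadditivity at $\rho$ and then invoke Lemma~\ref{lemmainiz}(ii); indeed the paper phrases its target as $\mathrm{I}_{s^2}/s^2>\mathrm{I}_{\rho^2}/\rho^2$ for $0<s<\rho$, which is exactly what your bound $\mathrm{I}_{t\rho^2}\ge t^{(p+1)/2}\mathrm{I}_{\rho^2}>t\,\mathrm{I}_{\rho^2}$ (for $t\in(0,1)$ and $\mathrm{I}_{\rho^2}<0$) delivers. The paper, however, proves that inequality by contradiction: it sets $Q=\inf_{s\in(0,\rho]}\mathrm{I}_{s^2}/s^2$, observes that if this infimum were first attained at some interior $\bar\rho<\rho$ then a minimizer $\bar u\in S_{\bar\rho}$ would exist, and rules this out by combining the first-order optimality condition in the amplitude parameter with the Pohozaev identity to force $\|\nabla\bar u\|_2=0$. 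You replace all of that with the one-line scaling inequality $E(\sqrt{\theta}\,u)\le\theta^{(p+1)/2}E(u)$ for $\theta\ge1$ --- valid because $\theta\le\theta^{(q+1)/2}\le\theta^{(p+1)/2}$ and the two terms being compared carry a $+$ sign --- hence $\mathrm{I}_{\theta s}\le\theta^{(p+1)/2}\mathrm{I}_{s}$, and then exploit $t^{(p+1)/2}+(1-t)^{(p+1)/2}<1$ together with the sign hypothesis. This is shorter, self-contained, and avoids both the intermediate minimizer and the Pohozaev computation; the paper's heavier identities are not wasted in its own economy, since essentially the same first-order/Pohozaev manipulations are reused in Lemma~\ref{lemnonex}. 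The only point you should make fully explicit is that $\mathrm{I}_{s}>-\infty$ (Gagliardo--Nirenberg in the mass-subcritical range, as you note), so that passing the factor $\theta^{(p+1)/2}$ through the infimum is legitimate.
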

\begin{proof}
 It suffices to show  that for any $s \in(0, \rho)$,  $\displaystyle \frac{\mathrm{I}_{s^2}}{s^2}>\frac{\mathrm{I}_{\rho^2}}{\rho^2}$. Note that the strong subadditivity at $\rho$ follows by  adding term by term  $\displaystyle\frac{\mu^2}{\rho^2}\mathrm{I}_{\rho^2}<\mathrm{I}_{\mu^2}$ and $\displaystyle\frac{(\rho^{2}-\mu^{2})}{\rho^2}\mathrm{I}_{\rho^2}<\mathrm{I}_{\rho^2-\mu^2}$. Let us define the quantity 
\[Q =\inf_{s\in (0, \rho]} \frac{\mathrm{I}_{s^2}}{s^2}.\]
From the fact that  $\mathrm{I}_{\rho^2}<0$ we have that $Q <0$, and by Proposition \ref{propasynt} that
\[
\bar\rho :=\inf\{ s \in (0, \rho] \ : \ \frac{\mathrm{I}_{s^2}}{s^2}=Q\}>0.\] 

Clearly, if 
$\bar\rho =\rho$ strong subadditivity at $
\rho$ holds, and a minimizer exists thanks to Lemma \ref{lemmainiz}. Therefore let us assume that
$\bar\rho<\rho.$
In the latter case, we have by definition that  for any $\mu \in (0, \bar\rho)$ 
\[
\frac{\mu^2}{\bar\rho^2}\mathrm{I}_{\bar\rho^2}<\mathrm{I}_{\mu^2} 
\]
and 
\[
\frac{\bar\rho^{2}-\mu^{2}}{\rho^{2}}\mathrm{I}_{\bar\rho^2}<\mathrm{I}_{\bar\rho^2-\mu^2}.
\]
Hence $
   \mathrm{I}_{\bar\rho^2} <  \mathrm{I}_{\mu^2} +\mathrm{I}_{\bar\rho^2-\mu^2}$
and 
by subadditivity, there exists $\bar{u}\in S(\bar\rho )$ with $E(\bar{u})=\mathrm{I}_{\bar\rho^2}$ and such that for $\theta \in (1-\varepsilon, 1+\varepsilon)$, for some small $\varepsilon>0$,
\[
\frac{E(\bar{u})}{\bar\rho^{2}}=\frac{\mathrm{I}_{\bar\rho^2}}{\bar\rho^{2}}\leq \frac{\mathrm{I}_{\theta^2\bar\rho^2}}{\theta^{2}\bar\rho^{2}}\leq \frac{E(\theta \bar{u})}{\theta^{2}\bar\rho^{2}}.
\]
Therefore we have 
\begin{equation}\label{eq:condderiv}
 \frac{d}{d\theta} 
\left(\theta^{2} E(\bar{u})-E(\theta \bar{u})\right)\big\lvert_{\theta=1}=0.
\end{equation}
For a minimizer $\bar{u}$ of $\mathrm{I}_{\rho^2}$ we have
\begin{equation}\label{eq:rangep}
\mathrm{I}_{\rho^2} =E(\bar{u})=\alpha\|\nabla \bar{u}\|_2^2+\beta\|\bar{u}\|_{q+1}^{q+1}-\gamma\|\bar{u}\|_{p+1}^{p+1},
\end{equation}
and 
\begin{equation}\label{eq:rangep-2}
 2\alpha\|\nabla \bar{u}\|_2^2+ \frac{d(q-1)}{2}\beta \|\bar{u}\|_{q+1}^{q+1}-\frac{d(p-1)}{2}\gamma\|\bar{u}\|_{p+1}^{p+1}=0.
\end{equation}
From \eqref{eq:condderiv} we get 
\[
\beta(q-1)\|\bar{u}\|_{q+1}^{q+1}-\gamma (p-1)\|\bar{u}\|_{p+1}^{p+1}=0,
\]
and by plugging the latter into \eqref{eq:rangep-2} we get $\|\nabla\bar{u}\|_{2}^2=0$, which is a contradiction. 
In the end, $\bar\rho=\rho$, thus subadditivity holds and a minimizer exists. 

\end{proof}

We move to the proof of the existence of a threshold mass  giving a dichotomy between $ \mathrm{I}_{\rho^2}=0$  and $ \mathrm{I}_{\rho^2}<0$. 
\begin{lemma}\label{lem:first} There exists a strictly positive threshold mass $\rho_0$ such that:\\
\textup{(i)} $ \mathrm{I}_{\rho^2}=0$  for all  $\rho\in (0,\rho_0]$;\\
\textup{(ii)} $ \mathrm{I}_{\rho^2}<0$  for all  $\rho\in (\rho_0, \infty)$.
\end{lemma}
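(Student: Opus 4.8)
The plan is to produce the threshold as $\rho_0:=\sup\{\rho>0:\ \mathrm{I}_{\rho^2}=0\}$ and to reduce everything, by soft arguments, to two quantitative facts: $\mathrm{I}_{\rho^2}<0$ for $\rho$ large, and $\mathrm{I}_{\rho^2}=0$ for $\rho$ small.

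First I would note that $\rho\mapsto\mathrm{I}_{\rho^2}$ is non-increasing: combining the weak subadditivity of Lemma \ref{lemmainiz}(i) with $\mathrm{I}_{\rho^2-\mu^2}\le 0$ (Proposition \ref{propasynt}) gives $\mathrm{I}_{\rho^2}\le \mathrm{I}_{\mu^2}+\mathrm{I}_{\rho^2-\mu^2}\le\mathrm{I}_{\mu^2}$ for $0<\mu<\rho$. In particular the set $Z:=\{\rho>0:\mathrm{I}_{\rho^2}=0\}$ is downward closed in $(0,\infty)$ (if $\rho\in Z$ and $0<\mu<\rho$, then $0\ge\mathrm{I}_{\mu^2}\ge\mathrm{I}_{\rho^2}=0$). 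If I can show that $Z$ is non-empty and bounded, then $\rho_0=\sup Z$ is finite and strictly positive, the continuity of $\rho\mapsto\mathrm{I}_{\rho^2}$ (established just above) gives $\mathrm{I}_{\rho_0^2}=0$, and downward closedness gives $Z=(0,\rho_0]$, which is (i); while for $\rho>\rho_0$ one has $\rho\notin Z$ and $\mathrm{I}_{\rho^2}\le 0$, hence $\mathrm{I}_{\rho^2}<0$, which is (ii). So only boundedness and non-emptiness of $Z$ remain.

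Boundedness is the easy half: fixing any $u\in S_1$ and testing the energy at $\rho u\in S_\rho$,
\[
\mathrm{I}_{\rho^2}\le E(\rho u)=\rho^2\Big(\alpha\|\nabla u\|_2^2+\beta\rho^{\,q-1}\|u\|_{q+1}^{q+1}-\gamma\rho^{\,p-1}\|u\|_{p+1}^{p+1}\Big),
\]
and since $p-1>q-1>0$ and $\gamma>0$, the bracket tends to $-\infty$ as $\rho\to\infty$; hence $\mathrm{I}_{\rho^2}<0$ for $\rho$ large.

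The real obstacle is the non-emptiness of $Z$, i.e. to show $\mathrm{I}_{\rho^2}=0$ — equivalently, since $\mathrm{I}_{\rho^2}\le 0$ always, $E(u)\ge 0$ for all $u\in S_\rho$ — once $\rho$ is small. My approach is to control the ``critical-looking'' term $\|u\|_{p+1}^{p+1}$ by interpolating $L^{p+1}$ between $L^{q+1}$ and the \emph{mass-critical} Lebesgue space $L^{2+4/d}$. Since $q+1<p+1<2+4/d$, Hölder's inequality gives $\|u\|_{p+1}\le\|u\|_{q+1}^{\lambda}\|u\|_{2+4/d}^{1-\lambda}$ with $\lambda\in(0,1)$ the unique exponent satisfying $\tfrac1{p+1}=\tfrac{\lambda}{q+1}+(1-\lambda)\tfrac{d}{2d+4}$, while the mass-critical Gagliardo--Nirenberg inequality gives $\|u\|_{2+4/d}^{2+4/d}\le C\|\nabla u\|_2^2\|u\|_2^{4/d}$. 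Inserting the second bound into the first, raising to the power $p+1$, and using $\|u\|_2=\rho$, one obtains
\[
\|u\|_{p+1}^{p+1}\le C\,\rho^{\nu}\,\|u\|_{q+1}^{\lambda(p+1)}\,\|\nabla u\|_2^{\mu},\qquad \nu=\tfrac{4(1-\lambda)(p+1)}{2d+4}>0,\quad \mu=\tfrac{2d(1-\lambda)(p+1)}{2d+4}.
\]
The arithmetic of these exponents is what makes the bound useful: $p<1+\tfrac4d$ and $q<p$ force $\lambda(p+1)<q+1$, $\mu<2$, and, crucially, $\tfrac{\lambda(p+1)}{q+1}+\tfrac{\mu}{2}=1$, so $\tfrac{q+1}{\lambda(p+1)}$ and $\tfrac2\mu$ are conjugate Hölder exponents. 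Young's inequality then gives $\|u\|_{q+1}^{\lambda(p+1)}\|\nabla u\|_2^{\mu}\le\tfrac{\lambda(p+1)}{q+1}\|u\|_{q+1}^{q+1}+\tfrac{\mu}{2}\|\nabla u\|_2^2$, hence $\gamma\|u\|_{p+1}^{p+1}\le C\rho^{\nu}\big(\|\nabla u\|_2^2+\|u\|_{q+1}^{q+1}\big)$, and therefore
\[
E(u)\ \ge\ (\alpha-C\rho^{\nu})\,\|\nabla u\|_2^2+(\beta-C\rho^{\nu})\,\|u\|_{q+1}^{q+1}\ \ge\ 0
\]
as soon as $\rho$ is small enough that both coefficients are non-negative. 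This forces $\mathrm{I}_{\rho^2}=0$, so $Z\supseteq(0,\rho_*]$ for some $\rho_*>0$, and the proof is complete. The step I expect to require the most care is exactly this last interpolation-and-Young estimate, in particular the verification of the identity $\tfrac{\lambda(p+1)}{q+1}+\tfrac{\mu}{2}=1$, which is where the mass-subcriticality $p<1+\tfrac4d$ enters essentially.
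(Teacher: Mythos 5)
Your proof is correct, and the key step is carried out by a genuinely different mechanism than the paper's. The soft skeleton (monotonicity from weak subadditivity plus $\mathrm{I}_{\rho^2}\le 0$, downward closedness of the zero set, $\rho_0$ as its supremum, continuity to get $\mathrm{I}_{\rho_0^2}=0$) matches the paper, and your large-mass negativity via amplitude scaling $u\mapsto\rho u$ is a harmless simplification of the paper's dilation $u_\lambda=\lambda^{2/(q-1)}u(\lambda x)$. The real divergence is in proving $\mathrm{I}_{\rho^2}=0$ for small $\rho$. The paper argues by contradiction: if $\mathrm{I}_{\rho_n^2}<0$ along $\rho_n\to 0^+$, then (by Lemma \ref{negimplcompact}) minimizers $u_{\rho_n}$ exist, Gagliardo--Nirenberg forces $\|\nabla u_{\rho_n}\|_2\to 0$, and this contradicts the emptiness of the set $A_\varepsilon=\{E(u)\le 0,\ G(u)=0,\ \|\nabla u\|_2\le\varepsilon\}$, whose proof combines the energy with the Pohozaev functional. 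You instead prove directly that $E\ge 0$ on all of $S_\rho$ for small $\rho$, by interpolating $L^{p+1}$ between $L^{q+1}$ and the mass-critical space $L^{2+4/d}$, applying the mass-critical Gagliardo--Nirenberg inequality, and closing with Young's inequality; the exponent identity $\tfrac{\lambda(p+1)}{q+1}+\tfrac{\mu}{2}=1$ you flag is exactly the interpolation relation multiplied by $p+1$, so it holds automatically, and $\nu>0$ because $q<p$ gives $\lambda<1$. Your route is shorter and self-contained for this lemma: it avoids any appeal to existence of minimizers (Lemma \ref{negimplcompact}) and to the Pohozaev identity, and it yields the stronger pointwise coercivity $E(u)\gtrsim\|\nabla u\|_2^2+\|u\|_{q+1}^{q+1}$ on $S_\rho$ for small $\rho$. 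What it does not reproduce is the emptiness of $A_\varepsilon$ itself, which the paper states as a separate Claim inside this proof and reuses later (in Lemma \ref{lem-mass-cri}) to rule out vanishing of $\|u_{\rho_n}\|_{p+1}$ along minimizing sequences at the threshold mass; so in the paper's architecture that Claim still has to be proved somewhere, even if your argument makes it unnecessary for the dichotomy itself.
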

\begin{proof}
The fact that $\mathrm{I}_{\rho^2}\leq0$, see Proposition \ref{propasynt}, together with  the weak subadditivity property implies that if $\mathrm{I}_{\rho^2}<0$, then 
$\mathrm{I}_{s^2}<0$ for all $s>\rho.$ The  negativity of $\mathrm{I}_{\rho^2}<0$ for sufficiently large $\rho$ follows by a scaling argument. Indeed, let us rescale $u$ as  $
u_{\lambda}=\lambda^{\frac{2}{q-1}}u(\lambda x)$.
We have
\[
E(u_{\lambda})=\lambda^{\frac{2(q+1)}{q-1}-d} \left( \alpha\|\nabla u\|_2^2+\beta\|u\|_{q+1}^{q+1}
\right)- \gamma\lambda^{\frac{2(p+1)}{q-1}-d}\|u\|_{p+1}^{p+1}
\]
and then $E(u_{\lambda})<0$ after choosing a sufficiently large $\lambda$, since $\frac{2(p+1)}{q-1}-d>\frac{2(q+1)}{q-1}-d$ if and only if $p>q$. On the other hand, $\|u_{\lambda}\|_2^2=\lambda^{\frac{4}{q-1}-d}\|u\|_2^2$ and hence a large $\lambda$  corresponds to a large mass $\rho$. 
 \smallskip
 
\noindent Now we prove (i), i.e., that there exists $\rho_0>0$ such that
$ \mathrm{I}_{\rho^2}=0$  for all  $\rho\in ( 0,\rho_0]$. Note that from the weak subadditivity property, together with $\mathrm{I}_{\rho^2}\leq 0$, the function $\mathrm{I}_{\rho^2}$ is non-increasing. By defining the set $O= \{\rho \ : \  \mathrm{I}_{s^2}=0 \text{ for all } s \in (0,\rho)\}$, we prove that $O\neq\emptyset$ and that 
\[
\rho_0=\sup\{\rho \ : \  \mathrm{I}_{s^2}=0 \text{ for all } s \in (0,\rho)\}>0.
\] 
Moreover, $\mathrm{I}_{\rho^2}<0$ for any $\rho>\rho_0$.
The idea is to show that $\mathrm{I}_{\rho^2}$ cannot be attained in $S_{\rho}$ if $\rho$ is sufficiently small.
As a byproduct, we will have that
\begin{equation}\label{rho-0}
\rho_0=\sup\{\rho \ : \ \mathrm{I}_{s^2}=0 \text{ for all } s \in (0,\rho]\}
\end{equation}
is strictly positive, because the negativity of $\mathrm{I}_{\rho^2}$ implies existence of minimizers, see Lemma \ref{negimplcompact}.
\smallskip

\noindent Therefore, let us assume that there exists a sequence  $\{\rho_n\}$ such that
$\rho_n \rightarrow 0^+$ and $\mathrm{I}_{\rho_n^2}$ is attained  by ground states $u_{\rho_n}$. The fact that $E(u_{\rho_n})\leq 0$ guarantees that  $\|u\|_{p+1}^{p+1}\geq \frac{\alpha}{\gamma}\|\nabla u\|_2^2$,  and jointly with the Gagliardo-Nirenberg interpolation inequality  we get
\[
\|\nabla u_{\rho_n}\|_2^2\lesssim \|u_{\rho_n}\|_{p+1}^{p+1}\lesssim \rho_n^{ p+1-\frac{d(p-1)}{2}}\|\nabla u_{\rho_n}\|_2^{\frac{d(p-1)}{2}}
\]
and so, as $1<p<1+\frac4d$, 
\begin{equation*}\label{grad-small}
\lim_{n\to+\infty}\|\nabla u_{\rho_n}\|_2=0.
\end{equation*}
On the other hand,  ground states fulfill $G(u_{\rho_n})=0$, but this contradicts the following. 
\begin{claim} For  $\varepsilon>0$ sufficiently small, the set  
\begin{equation}\label{empty}
A_\varepsilon=\{u \in H^1\setminus \{0\} \ : \  E(u)\leq 0, \ G(u)=0, \ \|\nabla u\|_{2} \leq \varepsilon \}=\emptyset.
\end{equation}
\end{claim}
\noindent The first observation  is that for a function in $A_\varepsilon$, the kinetic energy $\|\nabla u\|_2^2$ has a size comparable to that of  the focusing potential energy term  $\|u\|_{p+1}^{p+1}$. Indeed, we have
\[
0\geq E(u)-\frac{2}{d(q-1)}G(u)=\frac{d(q-1)-4}{d(q-1)}\alpha\|\nabla u\|_2^2+\frac{p-q}{q-1}\gamma\|u\|_{p+1}^{p+1}
\]
which shows that
\[
\|\nabla u\|_2^2\geq \frac{ d(p-q)}{(4-d(q-1))} \frac{\gamma}{\alpha}\|u\|_{p+1}^{p+1}.
\]
On the other hand, the non-positivity of the energy implies
\begin{equation}\label{eq:sim-p-grad}
    \|u\|_{p+1}^{p+1}\geq \frac{\alpha}{\gamma}\|\nabla u\|_2^2,
\end{equation}
and then  $\|u\|_{p+1}^{p+1}\sim \|\nabla u\|_2^2$.
Note that 
\[0\geq E(u)-\frac{1}{2}G(u)=\frac{4-d(q-1)}{4}\beta\|u\|_{q+1}^{q+1}-\frac{4-d(p-1)}{4}\gamma\|u\|_{p+1}^{p+1}.\]
The latter inequality guarantees that $
\|u\|_{p+1}^{p+1}\gtrsim \|u\|_{q+1}^{q+1}$.
Therefore, thanks to the  Gagliardo-Nirenberg inequality, we have
\[ {(\|\nabla u\|_{2}^{2})}^{\frac{d(p-q)}{2d-(d-2)(q+1)}}{(\|u\|_{q+1}^{q+1})}^{\frac{ 2(p+1)-d(p-1)}{ 2d-(d-2)(q+1)}}\gtrsim\|u\|_{p+1}^{p+1},\]
and hence
\[ {(\|\nabla u\|_{2}^{2})}^{\frac{d(p-q)}{2d-(d-2)(q+1)}}{(\|u\|_{p+1}^{p+1})}^{\frac{ 2(p+1)-d(p-1)}{ 2d-(d-2)(q+1)}}\gtrsim\|u\|_{p+1}^{p+1}.\]
We now use  \eqref{eq:sim-p-grad}, and we conclude that
\[
(\|\nabla u\|_{2}^{2})^{\frac{d(p-q)+2(p+1)-d(p-1)}{2d-(d-2)(q+1)} } \gtrsim \|\nabla u\|_{2}^{2}.
\]
We notice that
\[ \frac{d(p-q)+2(p+1)-d(p-1)}{2d-(d-2)(q+1)}  >1,
\]
when $p>q$ 
and hence $\|\nabla u\|_{2}$ cannot be too small, namely $A_\varepsilon=\emptyset$ provided that $
\varepsilon\ll1$.
\end{proof}

The next Lemma is a non-existence result which shows that if the ground state energy is zero in a open interval, then necessarily the ground state energy is not achieved. 
\begin{lemma}\label{lemnonex}
If   $\mathrm{I}_{\rho^2}=0$ in an interval $I=(0, \rho_1)$,
then for any $\rho \in I$, $\mathrm{I}_{\rho^2}$ is not achieved in $S_{\rho}$.
\end{lemma}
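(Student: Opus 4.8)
The plan is to argue by contradiction. Fix $\rho\in I=(0,\rho_1)$ and suppose that $\mathrm{I}_{\rho^2}=0$ is attained by some $u_\rho\in S_\rho$. Since $\rho>0$ we have $u_\rho\not\equiv 0$, so, writing $K=\|\nabla u_\rho\|_2^2$, $P_q=\|u_\rho\|_{q+1}^{q+1}$, $P_p=\|u_\rho\|_{p+1}^{p+1}$ (all finite by Sobolev embedding, with $P_q,P_p>0$), the identity $E(u_\rho)=0$ reads $\alpha K+\beta P_q=\gamma P_p=:R>0$. The mechanism is to exhibit, for a suitable mass $\rho'\in(\rho,\rho_1)$, a competitor in $S_{\rho'}$ with \emph{strictly negative} energy; since $\rho'\in I$ forces $\mathrm{I}_{\rho'^2}=0$, this is the sought contradiction. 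The competitor will be obtained from $u_\rho$ by a mass-changing dilation.

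Concretely, I would test with $w_\theta(x):=\theta^{2/(q-1)}u_\rho(\theta x)$, $\theta>0$. A direct computation, valid throughout the range $1<q<p<1+\tfrac4d$, gives
\[
\|w_\theta\|_2^2=\theta^{\frac{4}{q-1}-d}\rho^2,\qquad
\alpha\|\nabla w_\theta\|_2^2+\beta\|w_\theta\|_{q+1}^{q+1}=\theta^{c}\,(\alpha K+\beta P_q),\qquad
\gamma\|w_\theta\|_{p+1}^{p+1}=\theta^{c'}\gamma P_p,
\]
where $c=\frac{2(q+1)-d(q-1)}{q-1}$ and $c'=\frac{2(p+1)-d(q-1)}{q-1}$. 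The point of this particular dilation is that the kinetic term and the lower-order defocusing $L^{q+1}$-term scale with the \emph{same} power $\theta^{c}$, whereas the focusing $L^{p+1}$-term scales with the strictly larger power $\theta^{c'}$: indeed one checks $0<c<c'$ (the second inequality because $c'-c=\tfrac{2(p-q)}{q-1}>0$) and $\frac{4}{q-1}-d>0$ for all $d\ge1$ in the stated range. Consequently
\[
E(w_\theta)=\theta^{c}(\alpha K+\beta P_q)-\theta^{c'}\gamma P_p=R\,(\theta^{c}-\theta^{c'}),
\]
which is strictly negative for every $\theta>1$.

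To finish, since $\rho<\rho_1$ I would pick any $\rho'\in(\rho,\rho_1)$ and let $\theta_0>1$ be the unique solution of $\theta_0^{\frac{4}{q-1}-d}=(\rho'/\rho)^2$ (unique because the exponent is positive, so the left-hand side is strictly increasing from $0$ to $\infty$). Then $w_{\theta_0}\in S_{\rho'}$, hence $\mathrm{I}_{\rho'^2}\le E(w_{\theta_0})=R(\theta_0^{c}-\theta_0^{c'})<0$, contradicting $\mathrm{I}_{\rho'^2}=0$. Therefore $\mathrm{I}_{\rho^2}$ cannot be achieved in $S_\rho$, which is the claim.

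I do not expect a serious obstacle beyond spotting the right scaling: one needs a mass-\emph{increasing} transformation under which the focusing term is strictly dominated by the sum of the kinetic and the defocusing terms, and the choice $\theta^{2/(q-1)}u(\theta\cdot)$ does exactly this by equalising the scaling exponents of $\|\nabla\cdot\|_2^2$ and $\|\cdot\|_{q+1}^{q+1}$. (One could alternatively combine the mass-preserving scaling $\lambda^{d/2}u(\lambda\cdot)$ with an amplitude modulation, but the one-parameter version above is cleaner.) It is worth noting that the Pohozaev identity $G(u_\rho)=0$ is not actually needed here — only $E(u_\rho)=0$, the positivity of $\alpha,\beta,\gamma$, and the strict inequalities $1<q<p<1+\tfrac4d$ enter, and no compactness of minimizing sequences is invoked.
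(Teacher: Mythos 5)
Your proof is correct, and it takes a genuinely different route from the paper's. The paper argues that a minimizer $u$ with $E(u)=\mathrm{I}_{\rho^2}=0$ must satisfy the first-order condition $\frac{d}{d\theta}E(\theta u)\big\lvert_{\theta=1}=0$ (because $\mathrm{I}$ vanishes on a neighbourhood of $\rho$ inside $I$), and then combines this with the Pohozaev identity $G(u)=0$ and $E(u)=0$ to force $\|\nabla u\|_2=0$, a contradiction; this requires knowing that a constrained minimizer is a critical point satisfying the Pohozaev constraint. You instead use the mass-increasing dilation $w_\theta=\theta^{2/(q-1)}u_\rho(\theta\cdot)$ — the same scaling the paper deploys in Lemma \ref{lem:first} to show negativity of $\mathrm{I}_{\rho^2}$ at large mass — but applied locally: since this dilation equalises the scaling exponents of the kinetic and $L^{q+1}$ terms while the $L^{p+1}$ term carries a strictly larger exponent, $E(u_\rho)=0$ immediately gives $E(w_\theta)=R(\theta^{c}-\theta^{c'})<0$ for $\theta>1$, producing a negative-energy competitor at a mass $\rho'\in(\rho,\rho_1)$ and contradicting $\mathrm{I}_{\rho'^2}=0$. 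Your exponent computations ($c=c$ for the two positive terms, $c'-c=\tfrac{2(p-q)}{q-1}>0$, and $\tfrac{4}{q-1}-d>0$ in the mass-subcritical range) all check out, and $R=\gamma\|u_\rho\|_{p+1}^{p+1}>0$ since $u_\rho\not\equiv0$. What your approach buys is economy: no Euler--Lagrange equation, no Pohozaev identity, and no differentiation of the energy along the amplitude path are needed — only $E(u_\rho)=0$ and the openness of $I$ at its right endpoint. The paper's version, on the other hand, extracts the structural identities \eqref{eq:rangep}--\eqref{eq:stimenerg2} that are reused elsewhere (e.g.\ in Lemma \ref{negimplcompact}), so the two arguments serve slightly different purposes within the overall scheme.
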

\begin{proof}
Let us assume the existence of a mass $\rho\in I$ and of a function $u \in S_{\rho}$ 
such that $\mathrm{I}_{\rho^2}=0=E(u)$.
Then 
\[
E(u)=\mathrm{I}_{\rho^2} \leq \mathrm{I}_{\theta^2\rho^2}\leq E(\theta u)
\]
for $\theta \in (1-\varepsilon,1+\varepsilon)$, for some small $\varepsilon>0$, and then
\[
\frac{d}{d\theta}E(\theta u)\big\lvert_{\theta=1}=0,
\]
which implies
\[
2\alpha\|\nabla u\|_{2}^{2}+(q+1)\beta\|u\|_{q+1}^{q+1}-(p+1)\gamma\|u\|_{p+1}^{p+1}=0.\]
The above condition tells us  that $u$ solves the static equation
\begin{equation*}\label{eq:stat}
- 2\alpha\Delta u  +(q+1)\beta|u|^{q-1} u-(p+1)\gamma|u|^{p-1}u=0,
\end{equation*}
and the latter is not compatible with the condition $E(u)=\mathrm{I}_{\rho^2}=0.$ To be more precise, we observe from \eqref{eq:rangep} and \eqref{eq:rangep-2} that 
\begin{equation*}\label{eq:stimenerg}
\mathrm{I}_{\rho^2} =E(u )=E(u)-\frac{2}{d(q-1)}G(u)=\alpha\left(1-\frac{4}{d(q-1)}\right)\|\nabla u \|_2^2+\frac{p-q}{q-1}\gamma\|u \|_{p+1}^{p+1}
\end{equation*}
and
\begin{equation} \label{eq:stimenerg2}
\begin{aligned}
2\alpha\|\nabla u\|_2^2&+(q+1)\beta\|u \|_{q+1}^{q+1}-(p+1)\gamma\|u \|_{p+1}^{p+1}\\
&=2\alpha\left(1-\frac{2(q+1)}{d(q-1)} \right)\|\nabla u \|_2^2+2\gamma\left(   \frac{p-q}{q-1}  \right)\|u \|_{p+1}^{p+1}.
\end{aligned}
\end{equation}
Furthermore,  due  to \eqref{eq:stimenerg2},
\[
2\alpha\|\nabla u\|_2^2+(q+1)\beta\|u\|_{q+1}^{q+1}-(p+1)\gamma\|u\|_{p+1}^{p+1}=-\frac 2d \|\nabla u\|_2^2\neq 0.
\]
This shows that  for any $\rho \in I$ minimizers for $E$ constrained on $S_{\rho}$ cannot exist.
\end{proof}
The last Lemma guarantees the existence of a ground state at the threshold mass $\rho_0$.
\begin{lemma}\label{lem-mass-cri}
For $\rho_0$ defined as in \eqref{rho-0}, there exists $u \in S_{\rho_0}$ such that $\mathrm{I}_{\rho_0^2}=E(u).$ 
\end{lemma}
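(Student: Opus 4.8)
## Proof proposal for Lemma \ref{lem-mass-cri}

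The plan is to show that $\mathrm{I}_{\rho_0^2}=0$ and that this value is attained by exhibiting a minimizing sequence that, after a suitable rescaling, cannot vanish or dichotomize, so that concentration-compactness (in the guise of Lemma \ref{lemmainiz}) forces strong convergence. First I would record that $\mathrm{I}_{\rho_0^2}=0$: by the definition \eqref{rho-0} of $\rho_0$ as a supremum, and by continuity of $\rho\mapsto \mathrm{I}_{\rho^2}$ together with the monotonicity established in Lemma \ref{lem:first}, we have $\mathrm{I}_{\rho^2}=0$ for all $\rho\le\rho_0$ and $\mathrm{I}_{\rho^2}<0$ for $\rho>\rho_0$. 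So the statement to prove is that the value $0$ is realized on $S_{\rho_0}$.

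The key step is to run a concentration-compactness argument at the level $\rho_0$. Let $\{u_n\}\subset S_{\rho_0}$ be a minimizing sequence, $E(u_n)\to 0$. Since $1<q<p<1+\tfrac4d$, the argument already used in the proof of Lemma \ref{lem:first} applies: from $E(u_n)\le o_n(1)$ we get $\|u_n\|_{p+1}^{p+1}\gtrsim \|\nabla u_n\|_2^2 - o_n(1)$, and via Gagliardo--Nirenberg the sequence $\{u_n\}$ is bounded in $H^1$ (the mass-subcritical exponents prevent blow-up of the gradient). I would then invoke the subadditivity inequality from Lemma \ref{lemmainiz}(i) together with the \emph{strict} inequality that must hold because $\mathrm{I}_{\mu^2}=0$ for all $\mu\le\rho_0$ while splitting off a genuine piece would require $\mathrm{I}_{\rho_0^2} < \mathrm{I}_{\mu^2}+\mathrm{I}_{\rho_0^2-\mu^2}=0$, which is false. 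Hence the only obstruction to compactness is vanishing: $\sup_{y}\int_{B(y,R)}|u_n|^2\to 0$ for every $R$. But vanishing would force $\|u_n\|_{p+1}\to 0$ by the standard Lions vanishing lemma, and then $E(u_n)=\alpha\|\nabla u_n\|_2^2 + \beta\|u_n\|_{q+1}^{q+1} - o_n(1)\ge -o_n(1)$ combined with $E(u_n)\to0$ gives $\|\nabla u_n\|_2\to0$. Now I would contradict this exactly as in the Claim inside Lemma \ref{lem:first}: a minimizing sequence with $\|\nabla u_n\|_2\to0$ would, after projecting onto the Pohozaev constraint or using the scaling structure, have to satisfy $G(u_n)\to0$ while also respecting \eqref{eq:sim-p-grad}, which is incompatible with the exponent inequality $\frac{d(p-q)+2(p+1)-d(p-1)}{2d-(d-2)(q+1)}>1$ unless the gradient stays bounded below. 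Therefore vanishing is excluded, dichotomy is excluded, and compactness holds: up to translations, $u_n\to u$ strongly in $L^2$ (hence in $H^1$ by lower semicontinuity of the energy and the fact that all terms converge), with $u\in S_{\rho_0}$ and $E(u)=\mathrm{I}_{\rho_0^2}=0$.

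An alternative, possibly shorter route I would consider: take $\rho_n\downarrow\rho_0$ with $\rho_n>\rho_0$, so by Lemma \ref{negimplcompact} there are minimizers $u_{\rho_n}\in S_{\rho_n}$ with $E(u_{\rho_n})=\mathrm{I}_{\rho_n^2}<0\to 0$. These satisfy $G(u_{\rho_n})=0$ and the identity $\mathrm{I}_{\rho_n^2}=\alpha\bigl(1-\tfrac{4}{d(q-1)}\bigr)\|\nabla u_{\rho_n}\|_2^2 + \tfrac{p-q}{q-1}\gamma\|u_{\rho_n}\|_{p+1}^{p+1}$ from \eqref{eq:stimenerg}-type manipulations, which (since the coefficient of the gradient term is negative in the regime $q>1+\tfrac2d$, but not in general $q>1$) needs care; in the generality $q>1$ I would instead use the $G$-balanced combination to show $\{u_{\rho_n}\}$ is bounded in $H^1$ and bounded away from $0$ in $\|\nabla\cdot\|_2$ via the Claim of Lemma \ref{lem:first}. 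Then the concentration-compactness / profile-decomposition argument as above produces a nontrivial weak limit $u$; rescaling $u$ to have mass exactly $\rho_0$ and using weak lower semicontinuity together with $\mathrm{I}_{\rho_0^2}=0$ and the strict subadditivity obstruction shows no mass is lost, giving $E(u)=0=\mathrm{I}_{\rho_0^2}$ with $u\in S_{\rho_0}$.

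The main obstacle I anticipate is ruling out the vanishing scenario cleanly: one must convert $\|\nabla u_n\|_2\to 0$ for a minimizing sequence at level $0$ into a contradiction, and this is precisely where the delicate exponent inequality from the Claim in Lemma \ref{lem:first} is needed — the point being that a \emph{normalized} minimizing sequence cannot have its kinetic energy collapse to zero while keeping $G=0$ (or asymptotically so) and mass fixed at $\rho_0$. Handling the translation degree of freedom (the minimizing sequence may run off to infinity) is routine via Lions' lemma but must be stated; once concentration is secured, strong $L^2$-convergence plus the continuity of each term of $E$ on $H^1$-bounded sets in the mass-subcritical range finishes the proof.
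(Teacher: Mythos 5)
Your primary route has a genuine gap, and it is exactly at the point you flag as ``the main obstacle.'' At the threshold mass one has $\mathrm{I}_{\rho_0^2}=0$ and $\mathrm{I}_{\mu^2}+\mathrm{I}_{\rho_0^2-\mu^2}=0$ for every $0<\mu<\rho_0$, so the subadditivity inequality is an \emph{equality}: the strict inequality you invoke to exclude dichotomy is precisely what is unavailable here, and your sentence ``splitting off a genuine piece would require $\mathrm{I}_{\rho_0^2}<\mathrm{I}_{\mu^2}+\mathrm{I}_{\rho_0^2-\mu^2}$'' has the logic reversed (equality is what \emph{permits} dichotomy in Lions' scheme). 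Worse, vanishing genuinely occurs for minimizing sequences at level $\rho_0$: for any $u\in S_{\rho_0}$ the spread-out family $u_\lambda=\lambda^{d/2}u(\lambda\,\cdot)$ with $\lambda\to0^+$ stays in $S_{\rho_0}$, has $E(u_\lambda)\to0=\mathrm{I}_{\rho_0^2}$, and has $\|\nabla u_\lambda\|_2\to0$. So no contradiction can be extracted from $\|\nabla u_n\|_2\to0$ for an \emph{arbitrary} minimizing sequence; the Claim in Lemma \ref{lem:first} requires $E(u)\le 0$ \emph{and} $G(u)=0$ exactly, and a generic minimizing sequence does not satisfy $G(u_n)\to0$ (nor can you ``project onto the Pohozaev constraint'' without further work). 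Compactness of every minimizing sequence is simply false at $\rho_0$; one must select a special sequence.

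Your alternative route is the paper's actual strategy: take $\rho_n\downarrow\rho_0$ with $\rho_n>\rho_0$, so minimizers $u_{\rho_n}$ exist by Lemma \ref{negimplcompact}, satisfy $E(u_{\rho_n})<0$ and $G(u_{\rho_n})=0$, and the Claim then forces $\|\nabla u_{\rho_n}\|_2\gtrsim1$, hence $\|u_{\rho_n}\|_{q+1}\gtrsim1$; rescaling to $S_{\rho_0}$ and applying the Lieb translation lemma yields a nonzero weak limit $\bar u$. But your mechanism for ruling out loss of mass is again ``the strict subadditivity obstruction,'' which does not hold at $\rho_0$. The correct closing step, and the ingredient missing from your write-up, is the non-existence result of Lemma \ref{lemnonex}: if $\|\bar u\|_2^2=\mu^2<\rho_0^2$, the Brezis--Lieb lemma together with weak subadditivity gives $E(\bar u)=\mathrm{I}_{\mu^2}=0$, i.e., $\bar u$ would be a constrained minimizer at a mass where $\mathrm{I}$ vanishes on a whole interval, which Lemma \ref{lemnonex} forbids. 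This forces $\bar u\in S_{\rho_0}$ and then $E(\bar u)=0=\mathrm{I}_{\rho_0^2}$.
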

\begin{proof}
Consider a  sequence $\{\rho_n\}$ of masses converging to $ \rho_0$, with $\rho_n>\rho_0$ for any $n$. By definition, $\mathrm{I}_{\rho_n^2}<0$, so denote by  $u_{\rho_n}$ a ground state that belongs to $S_{\rho_n}$. Clearly $u_{\rho_n}$ is bounded in $H^1$ and 
$\displaystyle \liminf_{n\to\infty} \|u_{\rho_n}\|_{p+1}^{p+1}>0$.
Indeed, suppose by the absurd that  $\displaystyle \lim_{k\to\infty}\|u_{\rho_{n_k}}\|_{p+1}^{p+1}=0$ along some subsequence $\{\rho_{n_k}\}$.  Then, by the negativity of the energy,  we get $\displaystyle \lim_{k\to\infty}\|\nabla u_{\rho_{n_k}}\|_2^2 =0$, and the latter is in contrast to  \eqref{empty}. 
Note that by interpolation, as $ \|\nabla u_{\rho_n}\|_{2}\lesssim 1$,
\begin{equation}\label{eq:pqr}
\|u_{\rho_n}\|_2=\rho_n\lesssim 1, \quad\|u_{\rho_n}\|_{p+1}\lesssim 1, \quad \hbox{and} \quad \|u_{\rho_n}\|_{q+1}\gtrsim 1.
\end{equation}
Define $\tilde u_{\rho_n}=\frac{\rho_0}{\rho_n} u_{\rho_n}$ which belongs to $S_{\rho_0}$.  Clearly, the estimates in \eqref{eq:pqr} are valid for $\tilde u_{\rho_n}$, and so, by the Lieb Translation Lemma \cite{Lieb}, we can claim  that up to a space translation $\tilde u_{\rho_n}\rightharpoonup \bar u\neq 0$ in $H^1$, by possibly extracting a subsequence. To prove that $\bar u \in S_{\rho_0}$,
it suffices to observe that if $\|\bar u\|_2^2=\mu^2<\rho_0^2$, then by the Brezis-Lieb Lemma \cite{BL},
\[\mathrm{I}_{\rho_0^2-\mu^2}+\mathrm{I}_{\mu^2}+o_n(1)\leq E(\tilde u_{\rho_n}-\bar u)+E(\bar u)+o_n(1)=E(\tilde u_{\rho_n})=\mathrm{I}_{\rho_0^2}+o_n(1)=o_n(1),
\]
 hence by the weak subadditivity inequality $E(\bar u)=\mathrm{I}_{\mu^2}.$ By Lemma \ref{lemnonex} this is a contradiction.
\end{proof}
 
\begin{proof}[Proof of Theorem \ref{mainthm}]
    The content of Theorem \ref{mainthm} is now a consequence  of Lemma \ref{negimplcompact}, Lemma \ref{lem:first},  Lemma \ref{lemnonex}, and Lemma \ref{lem-mass-cri}.
\end{proof}

We conclude this Section by giving the following monotonicity and continuity properties  for the threshold masses $\rho_0(\alpha,\beta, \gamma)$ with respect to the parameters.

\begin{prop}\label{monotonprop}
Let $\rho_0(\alpha, \beta, \gamma)$ be the threshold mass given by Theorem \ref{mainthm},
then 
\begin{equation*}
\begin{aligned}
\rho_0 (\alpha', \beta, \gamma)<\rho_0(\alpha, \beta, \gamma),\\
\rho_0(\alpha, \beta', \gamma)<\rho_0(\alpha, \beta, \gamma),\\
\rho_0(\alpha, \beta, \gamma' )<\rho_0(\alpha, \beta, \gamma),
\end{aligned}
\end{equation*}
for $0<\alpha'<\alpha$, $0<\beta'<\beta$, $0<\gamma<\gamma'$.
\end{prop}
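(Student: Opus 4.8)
The plan is to avoid redoing any variational analysis and instead leverage the one extra piece of information Theorem~\ref{mainthm} already provides at the \emph{threshold} mass itself: the existence of a constrained minimizer there. Concretely, writing $\rho_0=\rho_0(\alpha,\beta,\gamma)$, Theorem~\ref{mainthm}(i) gives $\mathrm{I}_{\rho_0^2}^{\alpha,\beta,\gamma}=0$ (since $\rho_0\in(0,\rho_0]$), and the last assertion of that theorem produces $\bar u\in S_{\rho_0}$ with $E^{\alpha,\beta,\gamma}(\bar u)=\mathrm{I}_{\rho_0^2}^{\alpha,\beta,\gamma}=0$. First I would record that $\bar u\neq 0$ in $H^1$ (because $\|\bar u\|_2=\rho_0>0$), so that $\|\nabla\bar u\|_2>0$ — a nonzero $H^1(\mathbb R^d)$ function is non-constant — and also $\|\bar u\|_{q+1}>0$ and $\|\bar u\|_{p+1}>0$. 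These three strict positivities are the only genuinely quantitative input, and they are what upgrade ``$\le$'' to ``$<$'' below.

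Next I would use $\bar u$ as a competitor for the perturbed minimization problems. For $0<\alpha'<\alpha$,
\[
\mathrm{I}_{\rho_0^2}^{\alpha',\beta,\gamma}\le E^{\alpha',\beta,\gamma}(\bar u)=E^{\alpha,\beta,\gamma}(\bar u)-(\alpha-\alpha')\|\nabla\bar u\|_2^2=-(\alpha-\alpha')\|\nabla\bar u\|_2^2<0 .
\]
Since by Theorem~\ref{mainthm}(i)--(ii) applied to the triple $(\alpha',\beta,\gamma)$ one has $\mathrm{I}_{\rho^2}^{\alpha',\beta,\gamma}<0$ if and only if $\rho>\rho_0(\alpha',\beta,\gamma)$, the strict negativity just displayed forces $\rho_0>\rho_0(\alpha',\beta,\gamma)$, which is the first inequality. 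The other two are identical in structure: for $0<\beta'<\beta$ one computes $E^{\alpha,\beta',\gamma}(\bar u)=-(\beta-\beta')\|\bar u\|_{q+1}^{q+1}<0$, and for $0<\gamma<\gamma'$ one computes $E^{\alpha,\beta,\gamma'}(\bar u)=-(\gamma'-\gamma)\|\bar u\|_{p+1}^{p+1}<0$; in each case the dichotomy of Theorem~\ref{mainthm} for the perturbed parameters converts $\mathrm{I}_{\rho_0^2}^{(\cdot)}<0$ into $\rho_0>\rho_0(\cdot)$.

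The argument is almost entirely soft, so there is no real obstacle to overcome — the ``hard part'' is simply recognizing that the minimizer furnished by Theorem~\ref{mainthm} at the critical mass is the right witness, and then being careful that $\bar u$ is nontrivial so that decreasing $\alpha$ or $\beta$ (resp.\ increasing $\gamma$) strictly decreases, rather than merely does not increase, the energy. If one also wanted the companion continuity statement, the same comparison estimates give $|\mathrm{I}_{\rho^2}^{\alpha,\beta,\gamma}-\mathrm{I}_{\rho^2}^{\alpha',\beta',\gamma'}|$ controlled by $|\alpha-\alpha'|+|\beta-\beta'|+|\gamma-\gamma'|$ times uniformly bounded (in the mass-subcritical regime) kinetic and potential energies, from which continuity of $\rho_0$ in $(\alpha,\beta,\gamma)$ follows by the strict monotonicity just proved together with the dichotomy.
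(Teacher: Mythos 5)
Your proof is correct and is essentially the paper's own argument: both use the ground state $u_{\rho_0}$ at the threshold mass, for which $E^{\alpha,\beta,\gamma}(u_{\rho_0})=0$, as a competitor showing that the perturbed infimum is strictly negative at mass $\rho_0$, and then invoke the dichotomy of Theorem \ref{mainthm} for the perturbed parameters. Your version merely spells out the nontriviality of $\|\nabla\bar u\|_2$, $\|\bar u\|_{q+1}$, $\|\bar u\|_{p+1}$, which the paper leaves implicit.
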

\begin{proof}
The proof follows from Theorem \ref{mainthm} by noticing that for the parameters $(\alpha', \beta, \gamma)$ the energy functional  fulfills $E^{\alpha', \beta, \gamma}(u_{\rho_0})<0$, with  $u_{\rho_0}$ a ground state for the energy  $E^{\alpha, \beta, \gamma}$, namely,    $E^{\alpha, \beta, \gamma}(u_{\rho_0})=0$. The other cases are identical.
\end{proof}
\begin{prop}\label{monotonprop2}
Let $\rho_{0,E}=\rho_0\left(\frac 12 , \frac{1}{q+1}, \frac{1}{p+1}\right)$ be the threshold mass given by Theorem \ref{mainthm}. Then 
\begin{equation*}
\lim_{\eta\rightarrow 1^+}\rho_0\left(\frac 12 , \frac{1}{q+1}, \frac{\eta}{p+1}\right)=\rho_{0,E}.
\end{equation*}
\end{prop}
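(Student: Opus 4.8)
The plan is to show that $\eta\mapsto\rho_0\left(\frac12,\frac1{q+1},\frac{\eta}{p+1}\right)$, which by Proposition \ref{monotonprop} is strictly decreasing for $\eta\in(1,\infty)$, is in fact right-continuous at $\eta=1$. Denote $\rho_0(\eta):=\rho_0\left(\frac12,\frac1{q+1},\frac{\eta}{p+1}\right)$ and $E_\eta:=E^{\frac12,\frac1{q+1},\frac{\eta}{p+1}}$, so that $E_1=E$ and $\rho_0(1)=\rho_{0,E}$. Since $\rho_0$ is monotone, the limit $L:=\lim_{\eta\to1^+}\rho_0(\eta)$ exists and satisfies $L\le\rho_{0,E}$; the content of the Proposition is the reverse inequality $L\ge\rho_{0,E}$.

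For the direction $L\le\rho_{0,E}$ one argues as in Proposition \ref{monotonprop}: for $\eta>1$ one has $E_\eta(u)\le E_1(u)$ for every $u$ (the focusing term is subtracted with a larger coefficient), hence $\mathrm I^{E_\eta}_{\rho^2}\le\mathrm I^{E_1}_{\rho^2}$, which combined with Theorem \ref{mainthm} gives $\rho_0(\eta)\le\rho_0(1)=\rho_{0,E}$; letting $\eta\to1^+$ yields $L\le\rho_{0,E}$. For the reverse inequality, suppose for contradiction that $L<\rho_{0,E}$ and fix a mass $\rho$ with $L<\rho<\rho_{0,E}$. By Theorem \ref{mainthmforE}(i) we have $\mathrm I_{\rho^2}=\mathrm I^{E_1}_{\rho^2}=0$, so there is a minimizing sequence $\{u_n\}\subset S_\rho$ with $E_1(u_n)\to0$. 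Meanwhile, for every $\eta>1$, since $\rho>L\ge\rho_0(\eta)$ is impossible only when $\rho>\rho_0(\eta)$ — indeed $\rho>L=\inf_{\eta>1}\rho_0(\eta)$ forces $\rho>\rho_0(\eta)$ for $\eta$ close enough to $1$ — Theorem \ref{mainthm}(ii) gives $\mathrm I^{E_\eta}_{\rho^2}<0$, so there exists $v_\eta\in S_\rho$ with $E_\eta(v_\eta)<0$, i.e.
\[
\frac12\|\nabla v_\eta\|_2^2+\frac{1}{q+1}\|v_\eta\|_{q+1}^{q+1}<\frac{\eta}{p+1}\|v_\eta\|_{p+1}^{p+1}.
\]
Then
\[
E_1(v_\eta)=E_\eta(v_\eta)+\frac{\eta-1}{p+1}\|v_\eta\|_{p+1}^{p+1}<\frac{\eta-1}{p+1}\|v_\eta\|_{p+1}^{p+1}.
\]
If we can show that $\|v_\eta\|_{p+1}^{p+1}$ stays bounded as $\eta\to1^+$, then the right-hand side tends to $0$, forcing $\liminf_{\eta\to1^+}E_1(v_\eta)\le0$; combined with $E_1(v_\eta)\ge\mathrm I^{E_1}_{\rho^2}=0$ this only gives $E_1(v_\eta)\to0$, which is consistent — so a sharper argument is needed. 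The cleaner route: since $\mathrm I^{E_1}_{\rho^2}=0$ but a minimizer cannot exist (Lemma \ref{lemnonex}, as $\rho<\rho_{0,E}$), while for $\eta>1$ with $\rho>\rho_0(\eta)$ the infimum $\mathrm I^{E_\eta}_{\rho^2}<0$ \emph{is} attained (Theorem \ref{mainthm}) by some ground state $w_\eta\in S_\rho$. Testing the stationarity $G^{E_\eta}(w_\eta)=0$ together with $E_\eta(w_\eta)<0$ and the quantitative lower bound from the Claim in the proof of Lemma \ref{lem:first} (the set $A_\varepsilon$ is empty) gives a uniform lower bound $\|\nabla w_\eta\|_2\ge c>0$ and, via the chain of inequalities there, $\|w_\eta\|_{p+1}^{p+1}\ge c'>0$, all uniform in $\eta\in(1,2]$; the upper bounds come from $E_\eta(w_\eta)<0$ and Gagliardo--Nirenberg, again uniformly. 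Passing $\eta\to1^+$, extract a weak-$H^1$ limit (after Lieb translation) $w_\eta\rightharpoonup \bar w\neq0$, show $\bar w\in S_\rho$ exactly as in Lemma \ref{lem-mass-cri} (if $\|\bar w\|_2^2=\mu^2<\rho^2$, Brezis--Lieb plus weak subadditivity of $\mathrm I^{E_1}$ produce a minimizer of $\mathrm I^{E_1}_{\mu^2}$ with $\mu<\rho<\rho_{0,E}$, contradicting Lemma \ref{lemnonex}), and conclude by lower semicontinuity that $E_1(\bar w)\le\liminf E_1(w_\eta)\le\liminf\big(E_\eta(w_\eta)+\tfrac{\eta-1}{p+1}\|w_\eta\|_{p+1}^{p+1}\big)\le0$. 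Then $\bar w$ is a minimizer for $\mathrm I^{E_1}_{\rho^2}=0$ with $\rho<\rho_{0,E}$, contradicting Lemma \ref{lemnonex}. Hence $L\ge\rho_{0,E}$, and with the first inequality $L=\rho_{0,E}$.

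The main obstacle is establishing the \emph{uniform-in-$\eta$} coercivity/compactness estimates for the family of ground states $w_\eta$: one must check that the constants produced by the Gagliardo--Nirenberg inequality and by the emptiness of $A_\varepsilon$ in Lemma \ref{lem:first} depend only on $(\alpha,\beta)=(\tfrac12,\tfrac1{q+1})$ and on an upper bound for $\gamma=\tfrac{\eta}{p+1}$, so that they hold simultaneously for all $\eta\in(1,2]$. Granting this, the weak-limit extraction and the Brezis--Lieb/Lieb-translation machinery are a verbatim repetition of the arguments already carried out in Lemma \ref{lem-mass-cri} and Lemma \ref{lemnonex}, so no genuinely new idea beyond uniform bookkeeping of constants is required.
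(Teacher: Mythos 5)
Your argument is correct in outline, but it takes a genuinely different and substantially heavier route than the paper. The paper's proof is a pure scaling argument with no compactness: assuming $\lim_{\eta\to1^+}\rho_0^\eta=\tilde\rho_0<\rho_{0,E}$, it takes the ground state $u_{\rho_0^\eta}$ \emph{at the threshold mass} $\rho_0^\eta$ of the perturbed functional (so that $E^{\frac12,\frac1{q+1},\frac\eta{p+1}}(u_{\rho_0^\eta})=0$), applies the dilation $u_\lambda=\lambda^{2/(q-1)}u(\lambda\cdot)$ with $\lambda_\eta$ chosen to bring the mass to the fixed value $\bigl(\tfrac{\rho_{0,E}^2+\tilde\rho_0^2}{2}\bigr)^{1/2}<\rho_{0,E}$, observes that $\lambda_\eta>c>1$ uniformly in $\eta$, and reads off $E(u_{\rho_0^\eta,\lambda_\eta})<0$ once $\eta<c^{2(p-q)/(q-1)}$ --- contradicting Theorem \ref{mainthmforE}(i) in two lines. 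You instead work at a \emph{fixed supercritical} mass $\rho\in(L,\rho_{0,E})$, take the negative-energy minimizers $w_\eta$ of the perturbed problems, and pass to the limit $\eta\to1^+$ via Lieb translation and Brezis--Lieb, rerunning the machinery of Lemmas \ref{lem:first} and \ref{lem-mass-cri} with constants made uniform in $\eta$. This works, but it forces you to re-verify all the coercivity estimates uniformly (as you note), and one step as written is imprecise: $E_1$ is \emph{not} weakly lower semicontinuous on $H^1(\mathbb R^d)$ because of the negative $L^{p+1}$ term, so the final inequality $E_1(\bar w)\le\liminf E_1(w_\eta)$ cannot be invoked directly; it must be replaced by the Brezis--Lieb splitting $E_1(w_\eta)=E_1(w_\eta-\bar w)+E_1(\bar w)+o(1)$ combined with $E_1(w_\eta-\bar w)\ge \mathrm{I}_{\|w_\eta-\bar w\|_2^2}=0$ and $E_1(w_\eta)\le E_\eta(w_\eta)+\tfrac{\eta-1}{p+1}\|w_\eta\|_{p+1}^{p+1}=o(1)$, which you essentially have on the page. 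In exchange for this extra work your argument is more robust (it does not rely on the exact homogeneity of the pure powers under the $\lambda^{2/(q-1)}u(\lambda\cdot)$ scaling), whereas the paper's proof is shorter and entirely elementary; for this specific statement the scaling route is clearly the more economical one.
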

\begin{proof}
By the monotonicity of the threshold mass as a function of $\eta>1$ given by Proposition \ref{monotonprop},  
$\rho_0(\frac 12 , \frac{1}{q+1}, \frac{\eta}{p+1})$ has a limit as $\eta\rightarrow 1^+$. Suppose, for the sake of contradiction, that
\begin{equation*}
\lim_{\eta\rightarrow 1^+}\rho_0\left(\frac 12 , \frac{1}{q+1}, \frac{\eta}{p+1}\right)=\tilde \rho_0<\rho_{0,E}.
\end{equation*}
By denoting $\rho_0^{\eta}=\rho_0\left(\frac 12, \frac{1}{q+1}, \frac{\eta}{p+1}\right)$,  we consider a ground state $u_{\rho_0^{\eta}}$ with mass $\|u_{\rho_0^{\eta}}\|_{2}=\rho_0^{\eta}$ for the functional
$E^{\frac 12 , \frac{1}{q+1}, \frac{\eta}{p+1}}(u)$, see Theorem \ref{mainthm}.
By considering the scaling $u_{\lambda}=\lambda^{\frac{2}{q-1}}u(\lambda \cdot)$, we have
\[
E(u_{\rho_0^{\eta},\lambda})=\lambda^{\frac{2-d(q-1)+2q}{q-1}}\left(\frac12 \|\nabla u_{\rho_0^{\eta}}\|_2^2+\frac{1}{q+1} \|u_{\rho_0^{\eta}}\|_{q+1}^{q+1}-\frac{\lambda^\frac{2(p-q)}{q-1}}{p+1} \|u_{\rho_0^{\eta}}\|_{p+1}^{p+1}\right).
\]
At this point we fix $\lambda_\eta$ such that $\|u_{\rho_0^{\eta},\lambda_\eta}\|_{2}^2=\frac{\rho_{0,E}^2+\tilde \rho_0^2}{2}<\rho_{0,E}^2$. Note that  $\|u_{\rho_0^{\eta},\lambda_\eta}\|_{2}^2=\lambda_\eta^{\frac{4-d(q-1)}{q-1}}(\rho_0^{\eta})^2$, i.e., $\lambda_\eta=\left(\frac{\rho_{0,E}^2+\tilde \rho_0^2}{2(\rho_0^{\eta})^2}\right)^{\frac{q-1}{4-d(q-1)}}$.
By the fact that $\lim_{\eta\rightarrow 1^+}\rho_0^{\eta}=\tilde \rho_0$, it exists $c>1$ independent of $\eta$ such that $\lambda_\eta>c$.
We conclude by observing that $E^{\frac 12 , \frac{1}{q+1}, \frac{\eta}{p+1}}(u_{\rho_0^\eta})=0$ and then 
\[
E(u_{\rho_0^{\eta},\lambda_\eta})<\lambda_\eta^{\frac{2-d(q-1)+2q}{q-1}}\left(\frac{\eta}{p+1} \|u_{\rho_0^{\eta}}\|_{p+1}^{p+1}-\frac{c^\frac{2(p-q)}{q-1}}{p+1}\|u_{\rho_0^{\eta}}\|_{p+1}^{p+1}\right)<0
\]
when $\eta$ is sufficently close to $1$. This contradicts
that $\rho_{0,E}$ is the threshold mass.

\end{proof}

\section{Small-mass data scattering  }\label{Sec:pct}

In this Section, we collect some important facts as well as crucial results we need to prove our main achievement. First, we begin with  the following well-posedness theory.
\begin{prop}\label{prop:lwp}
  Let $d\geq 1$, $1<q<p<1+\frac4d$, and $\psi_0\in \Sigma$. Then the Cauchy problem \eqref{NLS0} is globally well-posed. In particular, the solution $\psi\in C(\mathbb R; \Sigma)$,  $M(\psi(t))=M(\psi_0)$, $E(\psi(t))=E(\psi_0)$, and $P(\psi(t))=P(\psi_0)$.
\end{prop}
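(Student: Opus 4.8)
The plan is to establish global well-posedness for \eqref{NLS0} in $\Sigma$ by a standard fixed-point argument followed by a priori bounds that rule out finite-time blow-up. First I would prove \emph{local} well-posedness in $H^1$: since both exponents satisfy $1<q<p<1+\frac4d<1+\frac{4}{d-2}$, the nonlinearity $g(\psi)=|\psi|^{q-1}\psi-|\psi|^{p-1}\psi$ is energy-subcritical, and the usual Strichartz-norm contraction on the Duhamel map $\Phi(\psi)(t)=U(t)\psi_0-i\int_0^t U(t-s)g(\psi(s))\,ds$ on a small time interval $[-T,T]$ gives a unique solution $\psi\in C((-T,T);H^1)\cap L^r_{\mathrm{loc}}W^{1,s}$ for admissible pairs, with $T$ depending only on $\|\psi_0\|_{H^1}$. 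I would cite Cazenave's book for this classical step rather than reproduce the Strichartz estimates.

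Next I would propagate the finite-variance information, i.e.\ show that if $\psi_0\in\Sigma$ then $\psi(t)\in\Sigma$ for $t$ in the existence interval, with continuous dependence. The standard tool is the operator $J(t)=x+2it\nabla=U(t)\,x\,U(-t)$, which commutes with $i\partial_t+\Delta$; one derives the integral identity $J(t)\psi(t)=U(t)(x\psi_0)-i\int_0^t U(t-s)\big(J(s)\,h'(|\psi|^2)\psi\big)(s)\,ds$ where the nonlinearity is gauge-invariant, and runs the same contraction as above in the space with $\|J(t)\psi(t)\|_{L^2}$ adjoined, using that $J(t)$ acts on $L^{r}$ like a derivative after the factorization $J(t)=2it\,e^{i|x|^2/4t}\nabla\big(e^{-i|x|^2/4t}\,\cdot\,\big)$ and the fractional chain rule. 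This yields $\psi\in C((-T_{\min},T_{\max});\Sigma)$.

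The conservation laws come next: for sufficiently regular data mass, energy and momentum are conserved by direct differentiation in $t$ and use of the equation, and then one passes to general $\Sigma$-data by the continuous dependence and a density/approximation argument (regularize $\psi_0$, or use the blow-up alternative together with persistence of regularity). So $M(\psi(t))=M(\psi_0)$, $E(\psi(t))=E(\psi_0)$, $P(\psi(t))=P(\psi_0)$ on the maximal interval.

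Finally, \emph{globality}: this is the only genuinely non-routine point, and it is where the sign structure and mass-subcriticality matter. From conservation of energy,
\[
\tfrac12\|\nabla\psi(t)\|_2^2 \le E(\psi_0) + \tfrac{1}{p+1}\|\psi(t)\|_{p+1}^{p+1},
\]
and by Gagliardo--Nirenberg, $\|\psi(t)\|_{p+1}^{p+1}\lesssim \|\psi_0\|_2^{\,p+1-\frac{d(p-1)}{2}}\|\nabla\psi(t)\|_2^{\frac{d(p-1)}{2}}$ with exponent $\frac{d(p-1)}{2}<2$ since $p<1+\frac4d$; Young's inequality then absorbs the gradient term and gives a bound on $\|\nabla\psi(t)\|_2$ depending only on $E(\psi_0)$ and $M(\psi_0)$, hence on $\|\psi_0\|_{H^1}$, uniform in $t$. (The defocusing $q$-term only helps.) The $\Sigma$-norm is then controlled on any bounded time interval by feeding this uniform $H^1$ bound back into the $J(t)$ integral identity and Gr\"onwall. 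The blow-up alternative forces $T_{\min}=T_{\max}=\infty$, and we conclude $\psi\in C(\mathbb R;\Sigma)$. I expect the $J(t)$-estimates — keeping track of the time-dependent weights in the Strichartz norms for the variance piece — to be the most delicate bookkeeping, but it is entirely standard and can be quoted from \cite{CW92} or Cazenave's monograph.
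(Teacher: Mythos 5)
Your proposal is correct and follows the same route the paper takes: the paper simply defers the local theory and persistence of the variance to Cazenave's monograph \cite{C03} and notes that globality follows from the blow-up alternative together with the mass-subcriticality of the focusing term, which is exactly the Gagliardo--Nirenberg absorption argument (exponent $\tfrac{d(p-1)}{2}<2$) you spell out. Your sketch just fills in the standard details that the paper leaves to the reference.
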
 

    For a proof of the well-posedness results, we refer the reader to the monograph \cite{C03}. We only recall that extension of local solution to global ones is a consequence of the blowup alternative and the fact that the leading, focusing, nonlinear term is of mass-subritical  nature.

\subsection{Pseudo-conformal transformation  and pseudo-conformal energy}

We recall  the  following pseudo-conformal transformation \[
(t,x, \psi) \mapsto (\tau,\xi,\varphi)
\]
defined by
\begin{equation}\label{eq.pct89f}
    t=\frac{\tau}{1-\tau}, \qquad x = \frac{\xi}{1-\tau},
\end{equation}   
and
\begin{equation*}\label{eq.pct89f2}
     \psi(t,x) =(1+t)^{-d/2} \varphi\left(\frac{t}{1+t},\frac{x}{1+t} \right)  e^{\frac{i|x|^2}{4(1+t)}}
    \end{equation*}
with inverse transformations given by 
\begin{equation}\label{eq.pct90f}
    \tau = \frac{t}{1+t},\qquad \xi = \frac{x}{1+t},
\end{equation}
and
  \begin{equation*}\label{eq.pct90f2}
      \varphi(\tau,\xi) =    \psi\left(\frac{\tau}{1-\tau},\frac{\xi}{1-\tau}\right) (1-\tau)^{-d/2} e^{-\frac{i  |\xi|^2}{4(1-\tau)}}.
    \end{equation*}
If $\psi$ is a solution to \eqref{NLS0} defined in the time interval $t\in [0,\infty)$, then
we can apply the pseudo-conformal transformation, and 
via
\begin{equation}
\begin{aligned}
    i \partial_t\psi +\Delta_x\psi(t,x) = e^{\frac{i|x|^2}{4(1+t)}} (1+t)^{-2-d/2}
    \left( (i \partial_\tau + \Delta_\xi)\varphi \right) \left( \frac{t}{1+t}, \frac{x}{1+t} \right) ,
\end{aligned}    
\end{equation}
we  obtain that $\varphi(\tau,\xi)$ solves 
\begin{equation}\label{eq.meq50}
\begin{aligned}
     i \partial_\tau \varphi +  \Delta_\xi \varphi &=  (1-\tau)^{-\delta(q)} |\varphi(\tau)|^{q-1}\varphi(\tau) - (1-\tau)^{-\delta(p)}|\varphi(\tau)|^{p-1}\varphi(\tau), 
    \end{aligned}
\end{equation}
on the time interval  $\tau \in [0,1)$, with initial datum
\begin{equation}\label{eqv0}
    \varphi_0(x) =  e^{-\frac{i |x|^2}{4}}\psi_0(x),
\end{equation}
where $\delta(p)$ and $\delta(q)$ are given by \eqref{def:alphas}
We give now the relations between the norms for $\varphi(\tau)$ and norms for $\psi(t)$. To this end, we define the operator 
\begin{equation}\label{def:pseudo-grad}
    J(t+1) =  \frac{x}{2}+i(1+t)\nabla_x.
\end{equation}
\begin{lemma}
The following identities hold true:
\begin{equation*}\label{eq.mrel21}
\begin{aligned}
   \|\varphi(\tau)\|_{2}&=\|\psi(t)\|_{2} , \\
       \left\|\varphi\left(\tau \right)\right\|^{r}_{r}&=(1+t)^{-d+dr/2} \left\|\psi\left(t\right)\right\|^{r}_{r}, \\
    \| |\xi| \varphi(\tau)\|_{2}&=(1+t)^{-1} \| |x| \psi(t)\|_{2},  \\
   \| \nabla_{\xi} \varphi(\tau)\|_{2}&=\|J(t+1)\psi(t)\|_{2},
\end{aligned}
\end{equation*}
where $\tau$ is defined  in \eqref{eq.pct90f}.
\end{lemma}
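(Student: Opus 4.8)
The plan is to derive all four identities from a single change of variables, namely the spatial part $\xi=(1-\tau)x$ of the pseudo-conformal transformation \eqref{eq.pct90f}, combined with the elementary fact that the Gaussian factor $e^{-i|\xi|^2/(4(1-\tau))}$ appearing in the definition of $\varphi$ has modulus one. Throughout I will use the relation $1-\tau=(1+t)^{-1}$, which gives $d\xi=(1+t)^{-d}\,dx$ and, since the phase is unimodular, $|\varphi(\tau,\xi)|=(1+t)^{d/2}|\psi(t,x)|$.

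First I would handle the $L^r$ identities. Directly,
\[
\|\varphi(\tau)\|_r^r=\int |\varphi(\tau,\xi)|^r\,d\xi=\int (1+t)^{dr/2}\,|\psi(t,x)|^r\,(1+t)^{-d}\,dx=(1+t)^{dr/2-d}\,\|\psi(t)\|_r^r,
\]
which is the second identity, and the first is the special case $r=2$. For the weighted $L^2$ identity, note that $|\xi|=(1+t)^{-1}|x|$ under the same change of variables, so
\[
\| |\xi|\varphi(\tau)\|_2^2=\int |\xi|^2|\varphi(\tau,\xi)|^2\,d\xi=(1+t)^{-2}\int |x|^2|\psi(t,x)|^2\,dx=(1+t)^{-2}\,\| |x|\psi(t)\|_2^2,
\]
and taking square roots gives the third identity.

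The only point requiring genuine care is the last identity, which I would obtain by differentiating $\varphi(\tau,\xi)=(1-\tau)^{-d/2}e^{-i|\xi|^2/(4(1-\tau))}\psi\!\big(t,\xi/(1-\tau)\big)$ in $\xi$. The chain rule produces two terms: differentiating the phase yields a factor $-i\xi/(2(1-\tau))$, while differentiating $\psi$ yields $(1-\tau)^{-1}(\nabla_x\psi)(t,x)$. Rewriting $\xi=(1-\tau)x$ and $(1-\tau)^{-1}=1+t$, these combine into
\[
\nabla_\xi\varphi(\tau,\xi)=-i\,(1-\tau)^{-d/2}e^{-i|\xi|^2/(4(1-\tau))}\Big(\tfrac{x}{2}\psi(t,x)+i(1+t)\nabla_x\psi(t,x)\Big)=-i\,(1-\tau)^{-d/2}e^{-i|\xi|^2/(4(1-\tau))}\,\big(J(t+1)\psi\big)(t,x),
\]
with $J(t+1)$ as in \eqref{def:pseudo-grad}. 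Passing to moduli removes both the unimodular phase and the constant $-i$, and the change of variables $d\xi=(1+t)^{-d}\,dx$ cancels the prefactor $(1-\tau)^{-d}=(1+t)^{d}$, giving $\|\nabla_\xi\varphi(\tau)\|_2=\|J(t+1)\psi(t)\|_2$. I expect the main---though still routine---obstacle to be precisely this bookkeeping step: verifying that the phase-derivative contribution and the $\nabla_x\psi$ contribution assemble into exactly the operator $J(t+1)$ and that all powers of $(1+t)$ balance; there is no analytic difficulty beyond this algebra.
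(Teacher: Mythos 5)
Your proposal is correct and follows essentially the same route as the paper: the unimodularity of the pseudo-conformal phase plus the change of variables $\xi=(1-\tau)x$, $d\xi=(1+t)^{-d}dx$ for the $L^r$ and weighted identities, and a chain-rule computation identifying $\nabla_\xi\varphi$ with (a unimodular multiple of) $J(t+1)\psi$ for the last one. The only cosmetic difference is that the paper first expresses $\nabla_x\psi$ in terms of $\varphi$ and then inverts, whereas you differentiate $\varphi$ directly; the resulting identity $(1-\tau)^{d/2}\nabla_\xi\varphi=e^{-i|x|^2/(4(1+t))}\bigl(-i\tfrac{x}{2}+(1+t)\nabla_x\bigr)\psi$ is the same.
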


\begin{proof}
From \eqref{eq.pct89f}  we have
\begin{equation*}\label{eq.PCT1}
    \ |\psi(t,x)| =  \left|\varphi\left(\frac{t}{1+t},\frac{x}{1+t}\right)\right| (1+t)^{-d/2}= \left|\varphi\left(\tau,\frac{x}{1+t}\right)\right| (1+t)^{-d/2},
\end{equation*}
and then we find
\[
          \int  |\psi(t,x)|^2 dx = \int (1+t)^{-d}\left|\varphi\left(\tau,\frac{x}{1+t}\right)\right|^2 dx =\left\|\varphi\left(\tau \right)\right\|^2_{2}
\]
and
\[
         \int  |\psi(t,x)|^r dx =  \int (1+ t)^{-dr/2}\left|\varphi\left(\tau,\frac{x}{1+t}\right)\right|^r dx
         = (1+t)^{d-dr/2} \left\|\varphi\left(\tau \right)\right\|^r_{r}.
\]
In a similar way, we have
\[
          \int  |x|^2|\psi(t,x)|^2 dx = \int (1+t)^{-d+2}\left|\frac{x}{1+t} \varphi\left(\tau,\frac{x}{1+t}\right)\right|^2 dx  =(1+t)^2\left\||\xi|\varphi\left(\tau\right)\right\|^2_{2}.
\]
Finally, we have
\[
     \nabla_x \psi(t,x)  =  e^{\frac{i |\xi|^2}{4(1-\tau)}} (1-\tau)^{d/2} \left(i \frac{\xi}{2} + (1-\tau)\nabla_\xi \right)\varphi\left( \tau, \xi \right)
     \]
     and
     \[
    (1-\tau)^{d/2}\nabla_\xi \varphi(\tau,\xi)  = e^{-\frac{i |x|^2}{4(1+t)}}   \left(-i \frac{x}{2} +(1+t)\nabla_x  \right)\psi(t,x).
\]
Hence,
\begin{equation}\label{pseudo-H1}
   \int  |\nabla_\xi \varphi(\tau,\xi)|^2 d\xi= \int |\left(\frac{x}{2} +i (1+t)\nabla_x  \right)\psi(t,x)|^2 dx
   = \|J(1+t)\psi(t)\|_{2}^2. 
\end{equation}

\end{proof}

After the application of the pseudo-conformal transformation on $\psi(t)$, we consider the initial value  problem \eqref{eq.meq50}-\eqref{eqv0} for $\varphi$. We rewrite    \eqref{eq.meq50} as
\begin{equation}\label{NLS4}
    \begin{aligned}
        i \partial_\tau \varphi+\Delta_\xi\varphi = g(\tau,\varphi(\tau)),
    \end{aligned}
\end{equation}
where
\begin{equation}\label{NLS5}
    g(\tau,\varphi(\tau))= (1-\tau)^{-\delta(q)} |\varphi(\tau)|^{q-1}\varphi(\tau) - (1-\tau)^{-\delta(p)} |\varphi(\tau)|^{p-1}\varphi(\tau),
\end{equation}
and $\delta(p)$ and $\delta(q)$  defined in \eqref{def:alphas}.
These $\delta$'s will be used along the proof of the main theorem. 
Hence, the mild solution
$ \varphi \in C([0,1);\Sigma) $ to the Cauchy problem \eqref{eq.meq50} satisfies the integral equation
\begin{equation}\label{NLS8}
    \varphi(\tau) = U(\tau) \varphi_0 -i \int_0^\tau U(\tau-\sigma)g(\sigma,\varphi(\sigma)) d \sigma.
\end{equation}
We have the following equivalence result, for which we refer to \cite{C03} for a proof.
\begin{prop}\label{GWP-equi}
Let $\psi_0\in\Sigma$. 
$\psi(t)$ is a solution  to  \eqref{NLS0} in  $C([0,\infty); \Sigma)$ if and only if   
$\varphi(t)$ is a solution  to  \eqref{NLS4}-\eqref{NLS5} in  $C([0,1); \Sigma)$ with initial datum given by \eqref{eqv0}.
\end{prop}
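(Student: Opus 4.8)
The plan is to verify that the pseudo-conformal change of variables is an \emph{invertible} map that carries the integral (Duhamel) formulation of \eqref{NLS0} into that of \eqref{NLS4}--\eqref{NLS5}, and conversely. Since both equations are well-posed (Proposition \ref{prop:lwp} for \eqref{NLS0}, and the analogous $\Sigma$-theory for the time-dependent equation, cf.\ \cite{C03}), it suffices to check that the transformation respects the ambient regularity class and the mild formulation; no genuinely new analysis is needed beyond the explicit substitution.

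\textbf{Step 1: the change of variables is a homeomorphism of the correct spaces.} First I would record that the explicit change of variables \eqref{eq.pct89f}--\eqref{eq.pct90f} is a $C^\infty$ diffeomorphism of $[0,\infty)\times\mathbb{R}^d$ onto $[0,1)\times\mathbb{R}^d$, with $\tau=t/(1+t)$ a homeomorphism of $[0,\infty)$ onto $[0,1)$ fixing $0$. For each fixed time, the map $\mathcal T\colon \psi(t,\cdot)\mapsto\varphi(\tau,\cdot)$ is a unitary map of $L^2$ onto $L^2$ and, by the identities of the Lemma preceding the statement, it maps $\Sigma$ onto $\Sigma$; in particular $\psi_0\mapsto\varphi_0=e^{-i|x|^2/4}\psi_0$ is a bijection of $\Sigma$ onto itself, consistent with \eqref{eqv0}. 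Since, on any compact subinterval $[0,T]\subset[0,\infty)$, the weights $(1+t)^{\pm1}$, $(1+t)^{\pm d/2}$ and the operator $J(1+t)$ from \eqref{def:pseudo-grad} entering those identities are bounded and continuous in $t$, one gets that $t\mapsto\psi(t)$ belongs to $C([0,\infty);\Sigma)$ if and only if $\tau\mapsto\varphi(\tau)$ belongs to $C([0,1);\Sigma)$. Hence the two function classes in the statement are in bijective correspondence under $\mathcal T$.

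\textbf{Step 2: the transformation intertwines the two mild formulations.} At the level of classical (smooth, decaying) solutions this is immediate: the pointwise identity \eqref{eq.meq50}, obtained by the chain rule, says that $\mathcal T$ intertwines $i\partial_t+\Delta_x$ with $i\partial_\tau+\Delta_\xi$, while the $L^r$-identity of the Lemma shows that $g(\psi)$ is transported with the very same conformal weight, thereby producing the time-dependent coefficients $(1-\tau)^{-\delta(q)}$ and $(1-\tau)^{-\delta(p)}$ in \eqref{NLS5}; and \eqref{eqv0} takes the datum to the datum. Transcribing this into the Duhamel identity for $\psi$, one uses that the free group is invariant under the transform, $\mathcal T\big[U(t)\psi_0\big]=U(\tau)\varphi_0$, and that the time substitution $s\mapsto\sigma=s/(1+s)$, with Jacobian $ds=(1-\sigma)^{-2}\,d\sigma$, converts the $t$-integral into the $\sigma$-integral and supplies, together with the spatial conformal factors, exactly the weights in \eqref{NLS5}. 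The outcome is \eqref{NLS8} for $\varphi$ with datum \eqref{eqv0}. Because the inverse transformation has the same structure, the converse implication follows by running the identical argument from \eqref{NLS8} back to the Duhamel identity for $\psi$.

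\textbf{Main obstacle.} The one delicate point is to make the manipulations of Step 2 rigorous for solutions that are \emph{merely} continuous with values in $\Sigma$, since the chain-rule computation behind \eqref{eq.meq50} presupposes more regularity. I would resolve this by density: for $\psi_0$ in a dense subset of $\Sigma$ (Schwartz functions, say) the solutions are smooth and rapidly decaying, so every step is legitimate; then I pass to the limit using the continuous dependence on data from the well-posedness theory (Proposition \ref{prop:lwp}) together with the continuity of $\mathcal T$ as a map $C([0,T];\Sigma)\to C\big([0,T/(1+T)];\Sigma\big)$ for each finite $T$. (Alternatively, one may simply appeal to the detailed treatment of the pseudo-conformal transformation in \cite{C03}, as we do.) I expect this limiting argument to be the only real difficulty; everything else is bookkeeping with the explicit change of variables.
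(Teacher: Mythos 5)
Your argument is correct and is essentially the standard one: the paper gives no proof of this proposition, referring instead to \cite{C03}, and the change-of-variables/Duhamel-intertwining argument you outline (unitarity of the transform on $L^2$, the $\Sigma$-norm identities, the invariance of the free group, the Jacobian $ds=(1-\sigma)^{-2}d\sigma$ producing the weights $(1-\tau)^{-\delta(q)}$, $(1-\tau)^{-\delta(p)}$, and the density/continuous-dependence step to justify the computation for solutions that are merely $C([0,T];\Sigma)$) is precisely the proof carried out there. Nothing to object to.
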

\noindent Moreover, the following equivalence about the  asymptotic dynamics  holds true, see \cite{C03}.
\begin{prop}\label{sca-equi}
Under the hypothesis of Proposition \ref{GWP-equi}, $U(-t)\psi(t)$ has a strong limit in the $L^2$-topology as $t\to+\infty$ if and only if $\varphi(\tau)$ has a strong limit in the $L^2$-topology as $\tau\to1^-$. In particular,
\[
\lim_{t\to+\infty}U(-t)\psi(t)=\lim_{\tau\to1^-}e^{\frac{i|x|^2}{4}}U(-\tau)\varphi(\tau) \quad \hbox{ in } \quad L^2.
\]
\end{prop}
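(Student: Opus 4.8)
\medskip
\noindent\textbf{Proof strategy for Proposition \ref{sca-equi}.}
The plan is to reduce the statement to the single exact identity
\begin{equation}\label{eq:exact-sca}
U(-t)\psi(t)=e^{\frac{i|x|^2}{4}}\,U(-\tau)\varphi(\tau),\qquad \tau=\tfrac{t}{1+t},
\end{equation}
valid for every finite $t\ge 0$. Granting \eqref{eq:exact-sca}, the Proposition is immediate: $f\mapsto e^{i|x|^2/4}f$ is an $L^2$-isometry and, the free group being strongly continuous on $L^2$, $\varphi(\tau)$ converges in $L^2$ as $\tau\to1^-$ if and only if $U(-\tau)\varphi(\tau)$ does (if $\varphi(\tau)\to\varphi_+$ then $U(-\tau)\varphi(\tau)=U(-\tau)(\varphi(\tau)-\varphi_+)+U(-\tau)\varphi_+\to U(-1)\varphi_+$; conversely, apply $U(\tau)$); hence, by \eqref{eq:exact-sca}, $\varphi(\tau)$ converges as $\tau\to1^-$ if and only if $U(-t)\psi(t)$ converges as $t\to+\infty$, and letting $t\to+\infty$ in \eqref{eq:exact-sca} gives the stated relation between the limits.

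To prove \eqref{eq:exact-sca} I would subtract the free flow in the two Duhamel formulas: applying $U(-t)$ to the mild formula for $\psi$ and $U(-\tau)$ to \eqref{NLS8} gives $U(-t)\psi(t)=\psi_0-i\int_0^t U(-s)g(\psi(s))\,ds$ and $U(-\tau)\varphi(\tau)=\varphi_0-i\int_0^\tau U(-\sigma)g(\sigma,\varphi(\sigma))\,d\sigma$, with $g(\sigma,\cdot)$ the time-dependent nonlinearity \eqref{NLS5}. Since $e^{i|x|^2/4}\varphi_0=\psi_0$ by \eqref{eqv0}, it remains to match the two Duhamel integrals. Fix $s\ge0$, set $\sigma=\tfrac{s}{1+s}$, and consider $F(t',x'):=\bigl[U(t'-s)g(\psi(s))\bigr](x')$, a solution of the \emph{free} equation $i\partial_{t'}F+\Delta_{x'}F=0$. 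The computation used in the excerpt to pass from \eqref{NLS0} to \eqref{eq.meq50} depends only on the change of variables \eqref{eq.pct89f}, not on the nonlinearity, so it applies verbatim to $F$: its pseudo-conformal transform $\Phi$ — built from $F$ by the same recipe that sends $\psi$ to $\varphi$ — again solves the free equation in the new time variable. Evaluating $\Phi$ at the time $\sigma$ corresponding to $t'=s$, where $F(s)=g(\psi(s))$, and substituting $\psi(s,x)=(1+s)^{-d/2}\varphi\bigl(\sigma,\tfrac{x}{1+s}\bigr)e^{i|x|^2/(4(1+s))}$ together with the scaling, amplitude and Gaussian-phase factors of the transformation, one finds
\begin{equation*}
\Phi(\sigma)=(1-\sigma)^2\,g(\sigma,\varphi(\sigma)),\qquad \Phi(0)=e^{-i|x|^2/4}\,U(-s)g(\psi(s)).
\end{equation*}
As $\Phi$ is a free solution, $\Phi(0)=U(-\sigma)\Phi(\sigma)$, i.e.\ $(1-\sigma)^2\,U(-\sigma)g(\sigma,\varphi(\sigma))=e^{-i|x|^2/4}\,U(-s)g(\psi(s))$. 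Inserting this into $\int_0^\tau U(-\sigma)g(\sigma,\varphi(\sigma))\,d\sigma$ and changing variables $s=\tfrac{\sigma}{1-\sigma}$, $ds=(1-\sigma)^{-2}\,d\sigma$ — which maps $[0,\tau]$ onto $[0,t]$ — produces $e^{-i|x|^2/4}\int_0^t U(-s)g(\psi(s))\,ds$; multiplying the Duhamel identity for $U(-\tau)\varphi(\tau)$ by $e^{i|x|^2/4}$ and using $e^{i|x|^2/4}\varphi_0=\psi_0$ then yields \eqref{eq:exact-sca}.

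The step I expect to be the main obstacle is the pointwise-in-time identity $\Phi(\sigma)=(1-\sigma)^2 g(\sigma,\varphi(\sigma))$: it is a matter of bookkeeping in which the power $(1+s)^{-dq/2}$ produced by $|u|^{q-1}u$ under the dilation combines with the amplitude factor $(1+s)^{d/2}$ of the transform (the Gaussian phases cancelling) to leave $(1+s)^{-(d/2)(q-1)}=(1-\sigma)^{(d/2)(q-1)}$, which against the factor $(1-\sigma)^{-\delta(q)}$ carried by \eqref{NLS5} leaves exactly $(1-\sigma)^2$ — and identically for the $p$-term, since $-(d/2)(q-1)-\delta(q)=-(d/2)(p-1)-\delta(p)=-2$; this is the same computation the excerpt performs to obtain \eqref{eq.meq50}, now read at a fixed time, and it is where the exponents $\delta(q),\delta(p)$ of \eqref{def:alphas} enter. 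The remaining point is to make sure every manipulation is legitimate in $L^2$ — that $g(\psi(s))\in L^2$ and the time integrals converge absolutely on $[0,t]$ — which in the range $1+\tfrac{2}{d}<q<p<1+\tfrac{4}{d}$ is part of the Strichartz-based well-posedness theory behind Proposition \ref{prop:lwp}; alternatively one first establishes \eqref{eq:exact-sca} for Schwartz data, where every step is classical, and then passes to general $\psi_0\in\Sigma$ by density and continuity of the flow. One could also bypass the Duhamel computation altogether by using the Mehler-type factorization $U(t)=\mathcal M(t)\mathcal D(t)\mathcal F\mathcal M(t)$ of the free propagator, which realizes the pseudo-conformal transformation directly as a composition of $U$ with dilations and Gaussian multipliers.
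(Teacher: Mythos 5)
Your argument is correct, but note that the paper does not prove this proposition at all: it is quoted from Cazenave's monograph \cite{C03}, where the standard derivation goes through the explicit factorization of the free propagator into multiplication by Gaussian phases, dilation, and Fourier transform (the route you mention only in your last sentence). What you do instead is establish the exact, finite-time identity $U(-t)\psi(t)=e^{i|x|^2/4}U(-\tau)\varphi(\tau)$ with $\tau=t/(1+t)$ by matching the two Duhamel formulas, using that the pseudo-conformal change of variables intertwines free solutions with free solutions (which indeed follows from the paper's own displayed computation relating $i\partial_t\psi+\Delta_x\psi$ to $(i\partial_\tau+\Delta_\xi)\varphi$, a computation that is blind to the nonlinearity). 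I checked your bookkeeping: the amplitude factor $(1+s)^{d/2}$ against the dilated power $(1+s)^{-dq/2}$ leaves $(1-\sigma)^{d(q-1)/2}=(1-\sigma)^{2-\delta(q)}$, the Gaussian phases cancel exactly, and $d\sigma=(1-\sigma)^2\,ds$ absorbs the remaining $(1-\sigma)^2$, so the two Duhamel integrals coincide and the identity, hence the proposition, follows. This buys slightly more than the stated claim (the two quantities agree for every finite $t$, not just in the limit), at the price of one genuine technical caveat which you correctly flag: for $d\geq 5$ and $q$ close to $1+\frac{4}{d}$ one may have $g(\psi(s))\notin L^2$ for $\psi(s)\in H^1$, so the pointwise-in-$s$ manipulation of the Duhamel integrands must be interpreted in $H^{-1}$ (or in a dual Strichartz space) and the identity recovered in $L^2$ a posteriori since both sides visibly lie there; either that regularization or your proposed density argument should be written out, and the factorization-of-$U(t)$ route avoids the issue entirely, which is presumably why the literature prefers it.
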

At this point, we fix a non-negative real number $A$ (that will be properly chosen later on in the paper), and we introduce the modified energy 
\begin{equation}\label{NLS9}
    \begin{aligned}
        E_A(\tau,\varphi(\tau)) &= \frac{(1-\tau)^A}{2}\|\nabla_\xi \varphi(\tau)\|_{2}^2
        +\frac{(1-\tau)^{A-\delta(q)}}{q+1} \|\varphi(\tau)\|_{q+1}^{q+1} \\
       & - \frac{(1-\tau)^{A-\delta(p)}}{p+1} \|\varphi(\tau)\|_{p+1}^{p+1}.
    \end{aligned}
\end{equation}
 It is implicit in our notation that when computing norms for the unknown $\varphi$, we are seeing them as functions of the pseudo-conformal space variable $\xi$.
We give the following.
\begin{lemma}\label{lem:A-energy}
    Let $A \geq 0$ and 
    $\varphi \in C([0,1);\Sigma)$ a solution to  \eqref{NLS8}.
Then, we have the relation
\begin{equation}\label{NLS8a}
    E_A(\tau, \varphi(\tau)) + \int_0^\tau R_A(\sigma, \varphi(\sigma)) =  E_A(0, \varphi(0)) ,
\end{equation}
where the correction energy term $R_A$ is defined by 
\begin{equation}\label{NLS10}
    \begin{aligned}
        R_A(\tau,\varphi(\tau)) &=   \frac{A(1-\tau)^{A-1}}{2}\|\nabla_\xi \varphi(\tau)\|_{2}^2\\
        &+(1-\tau)^{A-\delta(q)-1}  \frac{ (A-\delta(q))}{q+1} \|\varphi(\tau)\|_{q+1}^{q+1}\\
  &- (1-\tau)^{A-\delta(p)-1} \frac{(A-\delta(p))}{p+1} \|\varphi(\tau)\|_{p+1}^{p+1}.
    \end{aligned}
\end{equation}
\end{lemma}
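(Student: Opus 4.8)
The identity \eqref{NLS8a} is an energy-type balance law: it says that the time derivative of $E_A(\tau,\varphi(\tau))$ equals $-R_A(\tau,\varphi(\tau))$. So the natural strategy is to differentiate $E_A(\tau,\varphi(\tau))$ in $\tau$, integrate from $0$ to $\tau$, and verify that the resulting expression collapses to \eqref{NLS8a}. I would first establish the computation at a formal level and then indicate the standard density/regularization argument needed to justify it for $\Sigma$-solutions.

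\textbf{Step 1: Differentiate the time-dependent prefactors.} Each of the three summands in $E_A$ is a product of a power of $(1-\tau)$ and a fixed power of a spatial norm of $\varphi(\tau)$. The $\tau$-derivative splits by Leibniz into two pieces: one in which the derivative hits $(1-\tau)^{A}$, $(1-\tau)^{A-\delta(q)}$, $(1-\tau)^{A-\delta(p)}$ respectively, and one in which it hits the norm. The first group of terms is exactly $-R_A(\tau,\varphi(\tau))$: differentiating $(1-\tau)^{A}/2$ produces $-A(1-\tau)^{A-1}/2$, and similarly for the other two, so after accounting for the overall minus sign in \eqref{NLS8a} these match \eqref{NLS10} termwise (note the sign on the $p$-term is inherited from $E_A$).

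\textbf{Step 2: Show the remaining ``norm-derivative'' terms cancel.} What must vanish is the contribution coming from $\tfrac{d}{d\tau}$ acting on $\|\nabla_\xi\varphi\|_2^2$, $\|\varphi\|_{q+1}^{q+1}$, and $\|\varphi\|_{p+1}^{p+1}$, multiplied by their respective prefactors. Using the equation \eqref{NLS4}–\eqref{NLS5}, one computes $\tfrac{d}{d\tau}\|\nabla_\xi\varphi\|_2^2 = 2\,\Rea\langle\nabla_\xi\varphi,\nabla_\xi\partial_\tau\varphi\rangle = -2\,\Rea\langle\Delta_\xi\varphi, i\Delta_\xi\varphi - i g(\tau,\varphi)\rangle$, which after using that $\Rea\langle \Delta_\xi\varphi, i\Delta_\xi\varphi\rangle = 0$ reduces to $2\,\Ima\langle \Delta_\xi\varphi, g(\tau,\varphi)\rangle$; meanwhile $\tfrac{d}{d\tau}\|\varphi\|_{r+1}^{r+1} = (r+1)\,\Rea\langle |\varphi|^{r-1}\varphi, \partial_\tau\varphi\rangle$ for $r\in\{q,p\}$, which via the equation becomes $(r+1)\,\Ima\langle |\varphi|^{r-1}\varphi, \Delta_\xi\varphi - g(\tau,\varphi)\rangle = (r+1)\,\Ima\langle|\varphi|^{r-1}\varphi,\Delta_\xi\varphi\rangle$ since $\Ima\langle |\varphi|^{r-1}\varphi, g(\tau,\varphi)\rangle = 0$ ($g$ is a real combination of $|\varphi|^{r-1}\varphi$ terms). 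Collecting: the norm-derivative part of $\tfrac{d}{d\tau}E_A$ equals
\[
(1-\tau)^A\,\Ima\langle\Delta_\xi\varphi,g(\tau,\varphi)\rangle + (1-\tau)^{A-\delta(q)}\Ima\langle|\varphi|^{q-1}\varphi,\Delta_\xi\varphi\rangle - (1-\tau)^{A-\delta(p)}\Ima\langle|\varphi|^{p-1}\varphi,\Delta_\xi\varphi\rangle.
\]
But $g(\tau,\varphi) = (1-\tau)^{-\delta(q)}|\varphi|^{q-1}\varphi - (1-\tau)^{-\delta(p)}|\varphi|^{p-1}\varphi$, so $(1-\tau)^A\Ima\langle\Delta_\xi\varphi,g(\tau,\varphi)\rangle$ is exactly the negative of the last two terms — the whole bracket is zero. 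Hence only the prefactor-derivative terms survive, giving $\tfrac{d}{d\tau}E_A(\tau,\varphi(\tau)) = -R_A(\tau,\varphi(\tau))$, and integrating yields \eqref{NLS8a}.

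\textbf{Step 3: Rigor.} The main obstacle is not the algebra but the regularity: for a general $\Sigma$-valued mild solution the pairings above need not be classically differentiable, and $\Delta_\xi\varphi$ need only be a distribution. The standard remedy is to approximate $\varphi_0$ by smooth, compactly supported (or Schwartz) data, for which the corresponding solutions are smooth enough that all manipulations in Steps 1–2 are legitimate and the conserved-type quantities are continuous, then pass to the limit using the continuous dependence of the flow on the data in $C([0,T];\Sigma)$ for each $T<1$ (as provided by the well-posedness theory referenced via Proposition~\ref{prop:lwp} and Proposition~\ref{GWP-equi}) together with the mass-subcritical Gagliardo–Nirenberg bounds that make each term in $E_A$ and $R_A$ continuous on $\Sigma$. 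I would carry out the formal computation in the main text and relegate the approximation argument to a remark or invoke it by analogy with the classical pseudo-conformal energy identity in \cite{C03}.
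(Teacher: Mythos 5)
Your proof is correct and follows essentially the same route as the paper: the paper pairs the equation with $(1-\tau)^A\partial_\tau\bar\varphi$ and takes real parts, which is the same computation as your direct differentiation of $E_A$ followed by substitution of the equation and cancellation of the norm-derivative terms, and both arguments defer rigor to a standard regularization of the data. No gaps.
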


\begin{proof}
After a regularization argument, we can justify the following computations. Take the real part of the $L^2$-pairing of the equation  \eqref{NLS4}
with  $(1-\tau)^A\partial_\tau \varphi$; we find
\[
\begin{aligned}
    0&=\Rea \int(i \partial_\tau \varphi+\Delta_\xi\varphi(1-\tau)^A\partial_\tau \bar\varphi\, d\xi\\
    &-\int (1-\tau)^{-\delta(q)} |\varphi(\tau)|^{q-1}\varphi(\tau) + (1-\tau)^{-\delta(p)} |\varphi(\tau)|^{p-1}\varphi(\tau))(1-\tau)^A\partial_\tau \bar\varphi\, d\xi\\
    &=\frac12(1-\tau)^A\partial_\tau\left(\Rea\int i|\varphi|^2-|\nabla\varphi|^2d\xi\right)-\frac{1}{q+1}(1-\tau)^{A-\delta(q)}\partial_\tau\left(\int|\varphi|^{q+1}d\xi\right)\\
    &+\frac{1}{p+1}(1-\tau)^{A-\delta(p)}\partial_\tau\left(\int|\varphi|^{p+1}d\xi\right).
\end{aligned}
\]
Then we have 
\begin{equation*}\label{eq.meq7a}
\begin{aligned}
  &  \frac{d}{d\tau}  \left( \frac{(1-\tau)^A}{2}\|\nabla_\xi \varphi(\tau)\|_{2}^2 +\frac{(1-\tau)^{A-\delta(q)}}{q+1} \|\varphi(\tau)\|_{q+1}^{q+1}\right)\\
  & - \frac{d}{d\tau} \left(\frac{(1-\tau)^{A-\delta(p)}}{p+1} \|\varphi(\tau)\|_{p+1}^{p+1}\right) +R_A(\tau,\varphi(\tau))   = 0, 
\end{aligned}
\end{equation*}
where $R_A(\tau,\varphi(\tau)) $ is defined in \eqref{NLS10}.
So we have
\begin{equation*}\label{NLS16}
    \frac{d}{d\tau} E_A(\tau,\varphi(\tau)) + R_A(\tau,\varphi(\tau))=0,
\end{equation*}
where $E_A$ is defined in \eqref{NLS9}. After integrating in $\tau$, we find \eqref{NLS8a} and the proof is complete.
\end{proof}
The next proposition shows that the right-hand side of \eqref{NLS8a} is non-negative if $\varphi_0$ has mass less than or equal to that of the threshold mass $\rho_{0,E}$.
\begin{lemma}\label{nonnegrhs}
The functional
\[
 E_A(0, \varphi_0):=\frac 12 \| \nabla_{\xi} \varphi_0\|_{2}^2+\frac{1}{q+1}\|\varphi_0\|_{q+1}^{q+1}-\frac{1}{p+1}\|\varphi_0\|_{p+1}^{p+1}
\]
fulfills $E_A(\varphi_0)\geq 0$ for all $\varphi_0=e^{-\frac{i |x|^2}{4}}\psi_0(x)$   in  $\Sigma$   with  mass  $\|\varphi_0\|_{2}^2=\rho^2$ if and only if $\rho\leq \rho_{0,E}$ where $\rho_{0,E}$ is the threshold mass for the existence of ground states given in Theorem \ref{mainthmforE}.
\end{lemma}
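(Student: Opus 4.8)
The plan is first to notice that evaluating \eqref{NLS9} at $\tau=0$ sends every factor $(1-\tau)$ to $1$ and eliminates the $A$-dependence, so that $E_A(0,\varphi_0)$ is nothing but the energy functional $E=E^{1/2,\,1/(q+1),\,1/(p+1)}$ of \eqref{eq:en} evaluated at $\varphi_0$. Hence the statement is really a statement about the constrained variational problem analyzed earlier in the paper for the \emph{specific} parameters $(\alpha,\beta,\gamma)=(\tfrac12,\tfrac1{q+1},\tfrac1{p+1})$: for exactly these parameters Theorem~\ref{mainthmforE} furnishes the threshold mass $\rho_{0,E}=\rho_0(\tfrac12,\tfrac1{q+1},\tfrac1{p+1})$ with $\mathrm{I}_{\rho^2}=\inf_{S_\rho}E=0$ for $\rho\in(0,\rho_{0,E}]$ and $\mathrm{I}_{\rho^2}<0$ for $\rho>\rho_{0,E}$ (recall also $\mathrm{I}_{\rho^2}\le 0$ for all $\rho$, Proposition~\ref{propasynt}).

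Second, I would dispose of the Gaussian phase: multiplication by $e^{-i|x|^2/4}$ is a linear bijection of $\Sigma$ onto itself that preserves $\|\cdot\|_{2}$, $\|\cdot\|_{q+1}$ and $\|\cdot\|_{p+1}$, so as $\psi_0$ ranges over all of $\Sigma$ with $\|\psi_0\|_2=\rho$ the function $\varphi_0=e^{-i|x|^2/4}\psi_0$ ranges over all of $\Sigma\cap S_\rho$. Thus the property ``$E_A(0,\varphi_0)\ge 0$ for every such $\psi_0$'' is equivalent to ``$E(u)\ge 0$ for every $u\in\Sigma\cap S_\rho$''.

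Then the equivalence is obtained in two short steps. If $\rho\le\rho_{0,E}$, then $\inf_{S_\rho}E=\mathrm{I}_{\rho^2}=0$ by Theorem~\ref{mainthmforE}(i), so $E\ge 0$ on the whole sphere $S_\rho$, a fortiori on $\Sigma\cap S_\rho$. Conversely, if $\rho>\rho_{0,E}$, then $\mathrm{I}_{\rho^2}<0$ by Theorem~\ref{mainthmforE}(ii), and the corresponding ground state $u_\rho\in S_\rho$ with $E(u_\rho)=\mathrm{I}_{\rho^2}<0$ actually lies in $\Sigma$ — it solves the elliptic equation \eqref{eq} with $\omega>0$, see the remark following Theorem~\ref{mainthmforE} and Appendix~\ref{app} — so $\varphi_0:=u_\rho$ (equivalently $\psi_0:=e^{i|x|^2/4}u_\rho\in\Sigma$) exhibits a function in $\Sigma\cap S_\rho$ on which $E_A(0,\cdot)$ is strictly negative, and the nonnegativity fails for mass $\rho$. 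This yields the ``if and only if''.

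I do not anticipate any real obstacle; the only slightly delicate point is the passage between the $\Sigma$-constrained and the $H^1$-constrained infima, which the known $\Sigma$-regularity of ground states handles for the ``only if'' direction (the ``if'' direction needs no such passage). Alternatively one may bypass it by a routine density argument: $C_c^\infty(\R^d)\subset\Sigma$ is dense in $H^1$, the functional $E$ is continuous on $H^1$ in the mass-subcritical range (the $L^{q+1}$ and $L^{p+1}$ terms being controlled via Gagliardo--Nirenberg), and the renormalization $v\mapsto(\rho/\|v\|_2)v$ is continuous near a fixed mass level; hence $\inf_{\Sigma\cap S_\rho}E=\inf_{S_\rho}E=\mathrm{I}_{\rho^2}$, and the claim again collapses to $\mathrm{I}_{\rho^2}=0\iff\rho\le\rho_{0,E}$.
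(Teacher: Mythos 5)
Your proof is correct and follows essentially the same route as the paper: identify $E_A(0,\varphi_0)$ with the energy $E(\varphi_0)$, invoke the dichotomy of Theorem~\ref{mainthmforE} for the ``if'' direction, and for the converse take $\psi_0=e^{i|x|^2/4}u_\rho$ so that $\varphi_0=u_\rho$ is a ground state of mass $\rho>\rho_{0,E}$ with negative energy, using its $\Sigma$-regularity from the Appendix. The only cosmetic difference is that you justify the identification via the unimodular-phase bijection of $\Sigma\cap S_\rho$, whereas the paper verifies the same fact by the explicit computation $\|J(1)\psi_0\|_2^2=\|\nabla u_\rho\|_2^2$.
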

\begin{proof}
Thanks to Theorem \ref{mainthmforE}, if $\rho\leq \rho_{0,E}$ clearly $E_A(0, \varphi_0)\geq 0.$ For the  other implication  we  look at states $\varphi_0=e^{-\frac{i |x|^2}{4}}\psi_0(x).$
Let us fix $\rho>\rho_{0,E}$ and let us consider the initial condition  $\psi_0(x) =  e^{\frac{i |x|^2}{4}}u_{\rho}(x)$ where $u_{\rho}$ is a ground state (that belongs to $\Sigma$, see Appendix \ref{app}) with mass $\rho>\rho_{0,E}.$ Hence, $\varphi_0(x)=u_{\rho}(x)$ and  we have $E(u_{\rho})<0$ by Theorem \ref{mainthmforE}.
We claim that 
\begin{equation}\label{ide-sw}    
 E_A(0, \varphi_0)=E(u_\rho)<0.
\end{equation} 
By \eqref{pseudo-H1}
\[
       E_A(0, \varphi_0) = \frac{1}{2}\|J(1) \psi_0 \|_2^2+
        \frac{1}{q+1} \|u_{\rho}\|_{q+1}^{q+1} 
           -\frac{1}{p+1} \|u_{\rho}\|_{p+1}^{p+1},
\]
where $J(1)$ is defined in \eqref{def:pseudo-grad}. We compute $\|J(1) \psi_0 \|_2^2= \|(\frac{x}{2}+i\nabla_x)e^{\frac{i |x|^2}{4}}u_{\rho}\|_{2}^2$.
We have 
\begin{equation*}
    \begin{aligned}
        \|J(1) \psi_0 \|_2^2 &=\frac 14\|xu_{\rho}\|_{2}^2 +\|\nabla (e^{\frac{i |x|^2}{4}}u_{\rho})\|_2^2+  \Rea\langle xe^{\frac{i |x|^2}{4}}u_{\rho}, i \nabla (e^{\frac{i |x|^2}{4}}u_{\rho})\rangle\\
        &=\frac 14\|xu_{\rho}\|_{2}^2 +\|\nabla (e^{\frac{i |x|^2}{4}}u_{\rho})\|_2^2+  \Ima \langle xe^{\frac{i |x|^2}{4}}u_{\rho},  \nabla (e^{\frac{i |x|^2}{4}}u_{\rho})\rangle.
         \end{aligned}
\end{equation*}
Now,
\[
\|\nabla (e^{\frac{i |x|^2}{4}}u_{\rho})\|_2^2=\|\nabla u_{\rho}\|_2^2+\frac 14\|xu_{\rho}\|_{2}^2+\Rea\langle i x u_{\rho}, \nabla u_{\rho}\rangle =\|\nabla u_{\rho}\|_2^2+\frac 14\|xu_{\rho}\|_{2}^2
\]
 and
\begin{equation*}
    \begin{aligned}
 \Ima \langle xe^{\frac{i |x|^2}{4}}u_{\rho},  \nabla (e^{\frac{i |x|^2}{4}}u_{\rho})\rangle& =  \Ima \langle  x e^{\frac{i |x|^2}{4}}u_{\rho},  e^{\frac{i |x|^2}{4}}\nabla u_{\rho}+i \frac{x}{2}e^{\frac{i |x|^2}{4}}u_{\rho}\rangle\\
     &=- \Rea \langle xe^{\frac{i |x|^2}{4}}u_{\rho},  \frac{x}{2}e^{\frac{i |x|^2}{4}}u_{\rho}\rangle=-\frac 12\|xu_{\rho}\|_{2}^2.
     \end{aligned}
\end{equation*}
Hence, $ \|J(1) \psi_0 \|_2^2 =\|\nabla u_{\rho}\|_2^2$
and  hence the claim \eqref{ide-sw} holds. The proof of is concluded since $E_A(0, \varphi_0)<0$.
\end{proof}

\section{\texorpdfstring{Non-negativity of the correction energy  \eqref{NLS10} and estimates on the growth of $\|\nabla_\xi \varphi(\tau)\|_{2}^2$, $\|\varphi(\tau)\|_{q+1}^{q+1}$, $\|\varphi(\tau)\|_{p+1}^{p+1}$}{Non-negativity of the remainder}}

Aim of this subsection is to prove that we can select the exponent $A$ as introduced in Lemma \ref{lem:A-energy} satisfying  $\delta(q)<A<1$, for $\delta(q)$  defined in \eqref{def:alphas}, such that we can control of the growth of the modified energy given by \eqref{NLS9}, provided that  the initial datum has sufficiently small mass (depending on $A$).

\begin{lemma}\label{remainderestimate}
Let $d\geq 1$, $1+\frac{2}{d}< q < p < 1+ \frac{4}{d}$, and $\rho_{0,E}$ be the threshold mass for the existence of ground states given in Theorem \ref{mainthmforE}. We have the following: for any  $A\in(\delta(q),1)$, there exists a  positive mass $\rho_1(A)$, 
with 
\begin{equation}\label{mass-rho1-A}
\rho_1(A)=\rho_0\left(\frac{A}{2},\frac{(A-\delta(q))(A-\delta(p))^{-\frac{\delta(q)}{\delta(p)}}}{q+1}, \frac{1}{p+1}\right),
\end{equation}
such that for any $\varphi_0$ satisfying  $\|\varphi_0\|_{2}\leq \rho_1(A)$
\begin{equation}\label{energy-control}
    \begin{aligned}
        E_A(\tau,\varphi(\tau)) &= \frac{(1-\tau)^A}{2}\|\nabla_\xi \varphi(\tau)\|_{2}^2
        +\frac{(1-\tau)^{A-\delta(q)}}{q+1} \|\varphi(\tau)\|_{q+1}^{q+1} \\
       & - \frac{(1-\tau)^{A-\delta(p)}}{p+1} \|\varphi(\tau)\|_{p+1}^{p+1}\\
       &\leq E_A(0, \varphi(0)),
    \end{aligned}
\end{equation}
where $E_A$ is defined in \eqref{NLS9}.
\end{lemma}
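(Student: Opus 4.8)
The plan is to deduce \eqref{energy-control} directly from the balance identity \eqref{NLS8a} of Lemma~\ref{lem:A-energy}: since $E_A(\tau,\varphi(\tau))=E_A(0,\varphi(0))-\int_0^\tau R_A(\sigma,\varphi(\sigma))\,d\sigma$, it suffices to show that the correction term $R_A(\sigma,\varphi(\sigma))$ stays non-negative along the flow whenever $\|\varphi_0\|_2\le\rho_1(A)$. Throughout I use that $q<p$ forces $0<\delta(p)<\delta(q)$ (see \eqref{def:alphas}) and that $A\in(\delta(q),1)$, so $A/2$, $A-\delta(q)$ and $A-\delta(p)$ are all strictly positive.

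First I would factor the manifestly positive quantity $(1-\sigma)^{A-1}$ out of \eqref{NLS10} and write
\[
R_A(\sigma,v)=(1-\sigma)^{A-1}F(1-\sigma,v),\qquad F(s,v):=\frac{A}{2}\|\nabla v\|_2^2+\frac{A-\delta(q)}{q+1}s^{-\delta(q)}\|v\|_{q+1}^{q+1}-\frac{A-\delta(p)}{p+1}s^{-\delta(p)}\|v\|_{p+1}^{p+1}
\]
for $s>0$. Since the $L^2$-norm is conserved along the flow of \eqref{NLS4} (recall $\|\varphi(\sigma)\|_2=\|\psi(t)\|_2=\|\varphi_0\|_2$), the whole statement reduces to the variational claim: if $v\in H^1$ with $\|v\|_2\le\rho_1(A)$, then $F(s,v)\ge0$ for every $s>0$ (only $s\in(0,1]$ is actually needed).

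The key---and only really non-routine---step is a mass-preserving rescaling tuned to the exponent $\delta(p)$. Given $s>0$, set $\mu=\mu(s):=(A-\delta(p))^{1/\delta(p)}\,s^{-1}>0$ and write $v=\mu^{d/2}w(\mu\,\cdot)$, i.e.\ $w(x)=\mu^{-d/2}v(x/\mu)$, so that $\|w\|_2=\|v\|_2$. Exploiting the homogeneities $\|\nabla v\|_2^2=\mu^2\|\nabla w\|_2^2$ and $\|v\|_{r+1}^{r+1}=\mu^{2-\delta(r)}\|w\|_{r+1}^{r+1}$ for $r\in\{q,p\}$ (which follow from $\tfrac{d(r-1)}{2}=2-\delta(r)$), one obtains $F(s,v)=\mu^2F(s\mu,w)$; and $\mu$ is chosen precisely so that $s\mu=(A-\delta(p))^{1/\delta(p)}$, whence $\frac{A-\delta(p)}{p+1}(s\mu)^{-\delta(p)}=\frac1{p+1}$ and $(s\mu)^{-\delta(q)}=(A-\delta(p))^{-\delta(q)/\delta(p)}$. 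Therefore
\[
F(s,v)=\mu^2\left(\frac{A}{2}\|\nabla w\|_2^2+\frac{(A-\delta(q))(A-\delta(p))^{-\frac{\delta(q)}{\delta(p)}}}{q+1}\|w\|_{q+1}^{q+1}-\frac{1}{p+1}\|w\|_{p+1}^{p+1}\right)=\mu^2\,E^{\alpha_\star,\beta_\star,\gamma_\star}(w),
\]
with $(\alpha_\star,\beta_\star,\gamma_\star)=\bigl(\tfrac{A}{2},\,\tfrac{(A-\delta(q))(A-\delta(p))^{-\delta(q)/\delta(p)}}{q+1},\,\tfrac1{p+1}\bigr)$, which is exactly the triple defining $\rho_1(A)$ in \eqref{mass-rho1-A}. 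Since $\alpha_\star,\beta_\star,\gamma_\star>0$ and $\|w\|_2=\|v\|_2\le\rho_1(A)=\rho_0(\alpha_\star,\beta_\star,\gamma_\star)$, Theorem~\ref{mainthm} gives $E^{\alpha_\star,\beta_\star,\gamma_\star}(w)\ge\mathrm{I}^{\alpha_\star,\beta_\star,\gamma_\star}_{\|w\|_2^2}=0$, hence $F(s,v)\ge0$. Applying this with $v=\varphi(\sigma)$ and $s=1-\sigma$ yields $R_A(\sigma,\varphi(\sigma))\ge0$ for all $\sigma\in[0,1)$, and substituting into \eqref{NLS8a} gives \eqref{energy-control}.

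The conceptual point, and the step I expect to require the most care, is pinning down the scaling parameter $\mu(s)$: it normalizes the coefficient of the focusing term $\|v\|_{p+1}^{p+1}$ to $\tfrac1{p+1}$, turning the non-autonomous bracket $F(s,\cdot)$ into the autonomous functional $E^{\alpha_\star,\beta_\star,\gamma_\star}$, whose non-negativity on $L^2$-spheres of radius $\le\rho_0(\alpha_\star,\beta_\star,\gamma_\star)$ is precisely the content of Theorem~\ref{mainthm}. The remaining ingredients---the sign of $(1-\sigma)^{A-1}$, the scaling homogeneities, and the conservation of mass---are routine; one should only double-check that $\delta(q)<A<1$ together with $q<p$ is exactly what makes $\beta_\star>0$, so that Theorem~\ref{mainthm} applies and $\rho_1(A)$ is strictly positive.
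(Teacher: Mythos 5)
Your proposal is correct and follows essentially the same route as the paper: factor out $(1-\sigma)^{A-1}$ from $R_A$, apply the mass-preserving rescaling with parameter $\mu=(A-\delta(p))^{1/\delta(p)}s^{-1}$ to normalize the coefficient of the focusing term to $\tfrac{1}{p+1}$, and invoke Theorem \ref{mainthm} with the triple $\bigl(\tfrac{A}{2},\tfrac{(A-\delta(q))(A-\delta(p))^{-\delta(q)/\delta(p)}}{q+1},\tfrac{1}{p+1}\bigr)$ to conclude $R_A\geq0$ and hence \eqref{energy-control} via \eqref{NLS8a}. The identity $F(s,v)=\mu^2 F(s\mu,w)$ and the sign checks on $A-\delta(q)$, $A-\delta(p)$ match the paper's computation exactly.
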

\begin{proof}
Consider the correction energy functional \eqref{NLS10} in the form  
\begin{equation*}\label{NLS10bis}
    \begin{aligned}
    \tilde R_A(\tau,\varphi(\tau))&:=  (1-\tau)^{1-A}R_A(\tau,\varphi(\tau)) \\
&=\frac{A}{2}\|\nabla_\xi \varphi(\tau)\|_{L^2}^2 +(1-\tau)^{-\delta(q)}  \frac{(A-\delta(q))}{q+1} \|\varphi(\tau)\|_{q+1}^{q+1}\\
  &-  (1-\tau)^{-\delta(p)}\frac{(A-\delta(p))}{p+1} \|\varphi(\tau)\|_{p+1}^{p+1}.
    \end{aligned}
\end{equation*} 
 Denote $\varphi(\tau,\xi) = \lambda^{d/2}v(\tau, \lambda \xi)$. We get  
\begin{equation}\label{NLS10ter}
    \begin{aligned}
      \tilde R_A(\tau,\varphi(\tau))&=  \lambda^2\left( \frac{A}{2}\|\nabla_\xi v(\tau)\|_{2}^2 + (1-\tau)^{-\delta(q)} \frac{(A-\delta(q))\lambda^{-\delta(q)}}{q+1} 
      \|v(\tau)\|_{q+1}^{q+1}\right.\\
  &\left.\qquad\quad- (A-\delta(p)) \frac{(1-\tau)^{-\delta(p)}\lambda^{-\delta(p)}}{p+1} \|v(\tau)\|_{p+1}^{p+1}\right).
    \end{aligned}
\end{equation}
Fix  $\lambda$ so that  $ (A-\delta(p))(1-\tau)^{-\delta(p)}\lambda^{-\delta(p)}=1$, namely $\lambda=(A-\delta(p))^{1/\delta(p)}(1-\tau)^{-1}$. By plugging this choice of the parameter into \eqref{NLS10ter}, we obtain
\begin{equation*}\label{NLS10quater}
    \begin{aligned}
      \tilde R_A(\tau,\varphi(\tau))&=  \frac{(A-\delta(p))^{2/\delta(p)}}{(1-\tau)^2}\left(\frac{A}{2}\|\nabla_\xi v(\tau)\|_{2}^2\right. \\
        & \left.+ \frac{(A-\delta(q))(A-\delta(p))^{-\frac{\delta(q)}{\delta(p)}}}{q+1} \|v(\tau)\|_{q+1}^{q+1}- \frac{1}{p+1} \|v(\tau)\|_{p+1}^{p+1}\right).
    \end{aligned}
\end{equation*}
At this point, we look at the functional
\begin{equation}\label{stimremimp}
    \begin{aligned}
      \tilde R_A'(\tau,\varphi(\tau))&:=\frac{(1-\tau)^{2}}{(A-\delta(p))^{2/\delta(p)}}\tilde R_A(\tau,\varphi(\tau))\\
      &=  \left( \frac{A}{2}\|\nabla_\xi v(\tau)\|_{2}^2+ \frac{(A-\delta(q))(A-\delta(p))^{-\frac{\delta(q)}{\delta(p)}}}{q+1} \|v(\tau)\|_{q+1}^{q+1}\right.\\&\left.\qquad\quad- \frac{1}{p+1} \|v(\tau)\|_{p+1}^{p+1}\right).
    \end{aligned}
\end{equation}
Note that the  right-hand side of \eqref{stimremimp} in non-negative provided that 
\[
\|\varphi_0\|_{2}\leq \rho_1(A)=\rho_0\left(\frac{A}{2},\frac{(A-\delta(q))(A-\delta(p))^{-\frac{\delta(q)}{\delta(p)}}}{q+1},\frac{1}{p+1}\right)
\]
where $\rho_0$ is defined in \eqref{threshold-mass} of Theorem \ref{mainthm}.
Hence, the positivity of the correction energy term implies \eqref{energy-control}.
\end{proof}
The previous Lemma shows  that we are allowed to take an arbitrary $A\in (\delta(q),1)$ in order to prove that the correction term given by \eqref{NLS10} is non-negative when the initial datum has $L^2$-norm smaller than $\rho_1(A)$. Now we prove that this threshold mass is monotone increasing with respect to the parameter $A$.
\begin{lemma}\label{mono} Let $\rho_1(A)$ as defined in \eqref{mass-rho1-A}. We have that 
\begin{equation}\label{eq:sup-rhoA}
   \sup_{A\in (\delta(q),1)}\rho_1(A) = \rho_0\left(\frac{1}{2},\frac{(1-\delta(q))(1-\delta(p))^{-\frac{\delta(q)}{\delta(p)}}}{q+1}, \frac{1}{p+1}\right)
    \end{equation}
and 
\begin{equation}\label{eq:sup-rho-Ener}
\rho_0\left(\frac{1}{2},\frac{(1-\delta(q))(1-\delta(p))^{-\frac{\delta(q)}{\delta(p)}}}{q+1}, \frac{1}{p+1}\right)<\rho_{0,E}.
\end{equation}
Therefore, $E_A(0, \varphi(0))>0$ for all $A\in (\delta(q),1)$.
\end{lemma}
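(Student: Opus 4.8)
The plan is to exploit two scaling symmetries of the threshold mass $\rho_0(\alpha,\beta,\gamma)$ furnished by Theorem \ref{mainthm}. The homogeneity $E^{c\alpha,c\beta,c\gamma}=c\,E^{\alpha,\beta,\gamma}$ gives $\rho_0(c\alpha,c\beta,c\gamma)=\rho_0(\alpha,\beta,\gamma)$ for every $c>0$, while the mass-preserving dilation $u\mapsto\mu^{d/2}u(\mu\,\cdot)$ satisfies $E^{\alpha,\beta,\gamma}\big(\mu^{d/2}u(\mu\,\cdot)\big)=E^{\mu^{2}\alpha,\,\mu^{d(q-1)/2}\beta,\,\mu^{d(p-1)/2}\gamma}(u)$ and preserves $\|u\|_{2}$, hence maps $S_\rho$ bijectively onto itself and yields $\rho_0(\alpha,\beta,\gamma)=\rho_0\big(\mu^{2}\alpha,\mu^{d(q-1)/2}\beta,\mu^{d(p-1)/2}\gamma\big)$ for every $\mu>0$. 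Since $\tfrac{d(q-1)}{2}=2-\delta(q)$ and $\tfrac{d(p-1)}{2}=2-\delta(p)$, these two invariances force $\rho_0$ to depend on $(\alpha,\beta,\gamma)$ only through the scaling invariant $\mathcal{I}(\alpha,\beta,\gamma):=\alpha^{\delta(q)-\delta(p)}\beta^{\delta(p)}\gamma^{-\delta(q)}$, and by Proposition \ref{monotonprop} to be a strictly increasing function $F$ of $\mathcal{I}$ (the exponents of $\alpha,\beta$ are positive and that of $\gamma$ is negative, matching the monotonicities there).

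Plugging the parameters of \eqref{mass-rho1-A} into $\mathcal{I}$ and writing $h(A):=(A-\delta(q))(A-\delta(p))^{-\delta(q)/\delta(p)}$, one gets $\mathcal{I}\big(\tfrac A2,\tfrac{h(A)}{q+1},\tfrac1{p+1}\big)=c_0\,K(A)^{\delta(p)}$ with $c_0>0$ independent of $A$, where $K(A):=A^{\frac{\delta(q)}{\delta(p)}-1}(A-\delta(q))(A-\delta(p))^{-\frac{\delta(q)}{\delta(p)}}$. As $F$ and $t\mapsto t^{\delta(p)}$ are strictly increasing, \eqref{eq:sup-rhoA} reduces to showing $K$ strictly increasing on $(\delta(q),1)$ (together with $\rho_1(A)\to\rho^{\star}$ as $A\to1^-$). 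This is the crux, because $h$ itself is \emph{not} monotone there (it attains its maximum at $A=\delta(q)+\delta(p)$ and may exceed $h(1)$), so the factor $A/2$ of the kinetic term has to compensate — which is precisely what passing to $\mathcal{I}$ accomplishes. Setting $r:=\delta(q)/\delta(p)>1$ (recall $q<p$, whence $\delta(q)>\delta(p)$), and using $A>\delta(q)>\delta(p)>0$, logarithmic differentiation gives
\[
(\ln K)'(A)=\frac{r-1}{A}+\frac{1}{A-\delta(q)}-\frac{r}{A-\delta(p)}=\frac{(r-1)\,\delta(q)\,\delta(p)}{A\,(A-\delta(q))\,(A-\delta(p))}>0,
\]
because, after clearing denominators, the numerator is a polynomial in $A$ whose coefficient of $A^{2}$ is $(r-1)+1-r=0$ and whose coefficient of $A$ is $\delta(q)-r\,\delta(p)=0$, leaving the positive constant $(r-1)\delta(q)\delta(p)$. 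Hence $\rho_1(A)<\rho^{\star}$ for all $A\in(\delta(q),1)$; the limit $\rho_1(A)\to\rho^{\star}$ as $A\to1^{-}$ then follows from the continuity of $\rho_0$ in its parameters (provable by the rescaling/contradiction scheme already used for Proposition \ref{monotonprop2}), establishing \eqref{eq:sup-rhoA}.

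For \eqref{eq:sup-rho-Ener}, note $\rho^{\star}=\rho_0\big(\tfrac12,\tfrac{h(1)}{q+1},\tfrac1{p+1}\big)$ and $\rho_{0,E}=\rho_0\big(\tfrac12,\tfrac1{q+1},\tfrac1{p+1}\big)$, so by the strict monotonicity of $\rho_0$ in its second argument it suffices to prove $h(1)=(1-\delta(q))(1-\delta(p))^{-\delta(q)/\delta(p)}<1$, i.e. $\ln(1-\delta(q))<\tfrac{\delta(q)}{\delta(p)}\ln(1-\delta(p))$. Dividing by $\delta(q)>0$, this is $\vartheta(\delta(q))<\vartheta(\delta(p))$ for $\vartheta(x):=\tfrac{\ln(1-x)}{x}=-\sum_{n\geq1}\tfrac{x^{n-1}}{n}$, which is manifestly strictly decreasing on $(0,1)$; since $0<\delta(p)<\delta(q)<1$ the inequality holds. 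Finally, the last assertion follows at once: $E_A(0,\varphi(0))$ does not depend on $A$ (at $\tau=0$ all powers of $1-\tau$ equal $1$) and is the functional $E_A(0,\varphi_0)$ of Lemma \ref{nonnegrhs} with $\varphi_0=e^{-i|x|^2/4}\psi_0$ and $\|\varphi_0\|_2=\|\psi_0\|_2$; for a nonzero datum with $\|\psi_0\|_2\leq\rho^{\star}<\rho_{0,E}$, Lemma \ref{nonnegrhs} gives $E_A(0,\varphi_0)\geq0$, and Lemma \ref{lemnonex} (for $\rho\in(0,\rho_{0,E})$ the infimum $\mathrm{I}_{\rho^2}=0$ is not attained on $S_\rho$, hence $E>0$ there) upgrades it to $E_A(0,\varphi(0))>0$.

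The two places I expect to require care are exactly those flagged above: the monotonicity of $K$ (equivalently, of $\rho_1(A)$), whose proof hinges on the fortunate collapse of $(\ln K)'$ to a constant numerator once the scaling invariant is in play, and the continuity of $\rho_0$ in $(\alpha,\beta,\gamma)$, which is what turns the strict bound $\rho_1(A)<\rho^{\star}$ into the equality $\sup_{A\in(\delta(q),1)}\rho_1(A)=\rho^{\star}$.
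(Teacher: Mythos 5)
Your proof is correct and follows essentially the same route as the paper: a mass-preserving rescaling reduces the comparison of the thresholds $\rho_1(A)$ to the monotonicity in $A$ of a single scalar quantity (your $K(A)$ is exactly $f(A)^{-\delta(q)/\delta(p)}$ for the decreasing function $f$ the paper differentiates, and your invariant $\mathcal{I}$ is $\Lambda(\alpha,\beta,\gamma)^{-\delta(q)}$ from the paper's appendix), combined with Proposition \ref{monotonprop} and the endpoint continuity supplied by the scheme of Proposition \ref{monotonprop2}. You additionally write out two details the paper omits, namely the verification that $(1-\delta(q))(1-\delta(p))^{-\delta(q)/\delta(p)}<1$ and the strict positivity of $E_A(0,\varphi(0))$ via the non-attainment Lemma \ref{lemnonex}.
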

\begin{proof}
Set $h(A)=(A-\delta(q))(A-\delta(p))^{-\frac{\delta(q)}{\delta(p)}}$ and consider 
the functional 
\[
E^{\frac{A}{2},\frac{h(A)}{q+1},\frac{1}{p+1}}(\varphi)=\left( \frac{A}{2}\|\nabla_\xi \varphi(\tau)\|_{2}^2+ \frac{h(A)}{q+1} \|\varphi(\tau)\|_{q+1}^{q+1}- \frac{1}{p+1} \|\varphi(\tau)\|_{p+1}^{p+1}\right).
\]
 Define $\varphi(\tau,\xi)=\lambda^{d/2}v(\tau, \lambda \xi)$. We get
\[
\frac{E^{\frac{A}{2},\frac{h(A)}{q+1},\frac{1}{p+1}}(\tau,\varphi(\tau) )}{A}=\lambda^2 \left( \frac{1}{2}\|\nabla_\xi v(\tau)\|_{2}^2+ \frac{h(A)\lambda^{-\delta(q)}}{A(q+1)} \|v(\tau)\|_{q+1}^{q+1}- \frac{\lambda^{-\delta(p)}}{A(p+1)} \|v(\tau)\|_{p+1}^{p+1}\right)
\]
and hence
\[
\begin{aligned}
\frac{E^{\frac{A}{2},\frac{h(A)}{q+1},\frac{1}{p+1}}(\tau,\varphi(\tau) )}{A}=&\lambda^2 \left( \frac{1}{2}\|\nabla_\xi v(\tau)\|_{2}^2+ \frac{1}{q+1} \|v(\tau)\|_{q+1}^{q+1}\right.\\
&\qquad \quad -\left. \frac{1}{p+1}\frac{1}{A}\left(\frac{h(A)}{A}\right)^{-\frac{\delta(p)}{\delta(q)}} \|v(\tau)\|_{p+1}^{p+1}\right)
\end{aligned}
\]
when $\lambda=\left(A^{-1}h(A)\right)^{\frac{1}{\delta(q)}}.$ With this choice of $\lambda$, the function 
\[
f(A):=A^{-1}\left(A^{-1}h(A)\right)^{-\frac{\delta(p)}{\delta(q)}}=\left(1-\frac{\delta(p)}{A} \right)\left(1-\frac{\delta(q
)}{A}\right)^{-\frac{\delta(p)}{\delta(q)}}.
\]
For readability convenience, we set $x=\delta(q)$ and $y=\delta(p)$. Recall that $y<x<A<1$. By computing the derivative of $f(A)$ we get 
\[
\begin{aligned}
f'(A)&=yA^{-2} \left(1-\frac{x}{A} \right)^{-\frac{y}{x}}-y A^{-2}\left(1-\frac{y}{A} \right)\left(1-\frac{x}{A} \right)^{-\frac{y}{x}-1}\\
&=yA^{-3}\left(1-\frac{x}{A} \right)^{-\frac{y}{x}-1}(y-x)<0
\end{aligned}
\]
and then $f(A)$ is decreasing, so 
\eqref{eq:sup-rhoA} is proved. To get \eqref{eq:sup-rho-Ener}, it suffices to note that the function $(1-\delta(q))(1-\delta(p))^{-\frac{\delta(q)}{\delta(p)}}<1$. We omit the computations. 
\end{proof}

 We now exhibit an explicit upper bound on the growth of $\|\nabla_\xi \varphi(\tau)\|_{2}^2$, $\|\varphi(\tau)\|_{q+1}^{q+1}$, and $\|\varphi(\tau)\|_{p+1}^{p+1}$, by means of negative powers of $(1-\tau)$.
\begin{lemma}
 Let $d\geq1$, $1+\frac{2}{d}< q < p < 1+ \frac{4}{d}$, and  $\rho_{0,E}>0$ be the threshold mass for the existence of ground states as in Theorem \ref{mainthmforE}. Moreover, let $A\in(\delta(q),1)$ and $\rho_1(A)$ be as given in Lemma \ref{remainderestimate}. Then, 
 for $\|\varphi_0\|_{2}\leq \rho_1(A)$,
\begin{equation}\label{control-tau}
    \begin{aligned}
        \|\nabla_\xi \varphi(\tau)\|_{2}^2&\lesssim (1-\tau)^{-A},\\
\|\varphi(\tau)\|_{q+1}^{q+1}&\lesssim (1-\tau)^{\delta(q)-A},\\
\|\varphi(\tau)\|_{p+1}^{p+1}&\lesssim (1-\tau)^{\delta(p)-A}.
    \end{aligned}
\end{equation}
\end{lemma}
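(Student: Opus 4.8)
The plan is to combine the \emph{a priori} bound $E_A(\tau,\varphi(\tau))\le E_A(0,\varphi(0))$ furnished by Lemma~\ref{remainderestimate} with a coercivity estimate for the \emph{standard} energy $E=E^{\frac12,\frac1{q+1},\frac1{p+1}}$ on the sphere $S_\rho$ that follows at once from the variational results (Theorem~\ref{mainthm} and Proposition~\ref{monotonprop2}), once one observes that the $\tau$-weights in $E_A$ are precisely those undone by a mass-preserving spatial rescaling. First I would record two preliminary facts. Since $\|\varphi_0\|_{2}\le\rho_1(A)\le\sup_{A'\in(\delta(q),1)}\rho_1(A')<\rho_{0,E}$ by Lemma~\ref{mono}, the datum $\varphi_0=e^{-i|x|^2/4}\psi_0\in\Sigma$ has mass $\rho:=\|\varphi_0\|_{2}<\rho_{0,E}$, so that $C_0:=E_A(0,\varphi(0))=\tfrac12\|\nabla\varphi_0\|_{2}^2+\tfrac1{q+1}\|\varphi_0\|_{q+1}^{q+1}-\tfrac1{p+1}\|\varphi_0\|_{p+1}^{p+1}=E(\varphi_0)$ is finite (Sobolev embedding) and nonnegative (Theorem~\ref{mainthmforE}, as $\rho<\rho_{0,E}$). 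Then Lemma~\ref{remainderestimate} gives $E_A(\tau,\varphi(\tau))\le C_0$ for all $\tau\in[0,1)$.

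The computational heart is the rescaling. For $\tau\in[0,1)$ set $v_\tau(\xi):=(1-\tau)^{d/2}\varphi(\tau,(1-\tau)\xi)$, which is mass preserving, so $v_\tau\in S_\rho$ for every $\tau$. Using the elementary identities
\[
\|\nabla_\xi\varphi(\tau)\|_{2}^2=(1-\tau)^{-2}\|\nabla v_\tau\|_{2}^2,\quad
\|\varphi(\tau)\|_{q+1}^{q+1}=(1-\tau)^{\delta(q)-2}\|v_\tau\|_{q+1}^{q+1},\quad
\|\varphi(\tau)\|_{p+1}^{p+1}=(1-\tau)^{\delta(p)-2}\|v_\tau\|_{p+1}^{p+1}
\]
(which rest on $\tfrac{d(q-1)}{2}=2-\delta(q)$ and $\tfrac{d(p-1)}{2}=2-\delta(p)$), one checks that the three powers $A$, $A-\delta(q)$, $A-\delta(p)$ appearing in \eqref{NLS9} all collapse to the single factor $(1-\tau)^{A-2}$, giving the identity
\[
E_A(\tau,\varphi(\tau))=(1-\tau)^{A-2}E(v_\tau).
\]
Together with the bound above this yields $E(v_\tau)\le C_0\,(1-\tau)^{2-A}$, which in particular tends to $0$ as $\tau\to1^-$ because $A<1<2$.

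The last ingredient is a coercivity inequality for $E$ on $S_\rho$. Since $\rho<\rho_{0,E}$ and, by Proposition~\ref{monotonprop2} together with the monotonicity of Proposition~\ref{monotonprop}, the threshold mass $\rho_0(\tfrac12,\tfrac1{q+1},\tfrac\eta{p+1})$ increases to $\rho_{0,E}$ as $\eta\downarrow1$, I can fix $\eta>1$ with $\rho<\rho_0(\tfrac12,\tfrac1{q+1},\tfrac\eta{p+1})$. Theorem~\ref{mainthm}(i) then forces $E^{\frac12,\frac1{q+1},\frac\eta{p+1}}(w)\ge\mathrm{I}^{\frac12,\frac1{q+1},\frac\eta{p+1}}_{\rho^2}=0$ for every $w\in S_\rho$, i.e. $\tfrac{\eta}{p+1}\|w\|_{p+1}^{p+1}\le K(w):=\tfrac12\|\nabla w\|_{2}^2+\tfrac1{q+1}\|w\|_{q+1}^{q+1}$. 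Writing $E(w)=K(w)-\tfrac1{p+1}\|w\|_{p+1}^{p+1}\ge\bigl(1-\tfrac1\eta\bigr)K(w)$, hence $K(w)\le\tfrac{\eta}{\eta-1}E(w)$, we obtain for all $w\in S_\rho$
\[
\|\nabla w\|_{2}^2\le\frac{2\eta}{\eta-1}E(w),\qquad
\|w\|_{q+1}^{q+1}\le\frac{(q+1)\eta}{\eta-1}E(w),\qquad
\|w\|_{p+1}^{p+1}\le\frac{p+1}{\eta-1}E(w).
\]

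Finally I would apply these three estimates to $w=v_\tau$, insert $E(v_\tau)\le C_0(1-\tau)^{2-A}$, and undo the rescaling: e.g.
\[
\|\nabla_\xi\varphi(\tau)\|_{2}^2=(1-\tau)^{-2}\|\nabla v_\tau\|_{2}^2\le\frac{2\eta C_0}{\eta-1}\,(1-\tau)^{-A},
\]
and likewise $\|\varphi(\tau)\|_{q+1}^{q+1}=(1-\tau)^{\delta(q)-2}\|v_\tau\|_{q+1}^{q+1}\lesssim(1-\tau)^{\delta(q)-A}$ and $\|\varphi(\tau)\|_{p+1}^{p+1}=(1-\tau)^{\delta(p)-2}\|v_\tau\|_{p+1}^{p+1}\lesssim(1-\tau)^{\delta(p)-A}$, which is exactly \eqref{control-tau}. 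The only delicate point is the bookkeeping in the identity $E_A(\tau,\varphi(\tau))=(1-\tau)^{A-2}E(v_\tau)$: one must verify that the three distinct weights in \eqref{NLS9} align with the three distinct scaling exponents so as to leave one residual power $(1-\tau)^{A-2}$; everything else is a short consequence of results already established.
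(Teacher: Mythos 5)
Your proof is correct and follows essentially the same route as the paper: both rest on the monotone bound $E_A(\tau,\varphi(\tau))\le E_A(0,\varphi(0))$ from Lemma \ref{remainderestimate}, a mass-preserving rescaling that reduces the weighted functional to the unweighted energy on $S_\rho$, and the strict margin below $\rho_{0,E}$ supplied by Theorem \ref{mainthm} together with Propositions \ref{monotonprop} and \ref{monotonprop2}. The only difference is packaging: the paper splits $E_A$ as $\varepsilon\cdot(\text{positive part})+E_A^\star$ and shows $E_A^\star\ge 0$, whereas you phrase the same coercivity as $K(w)\le\frac{\eta}{\eta-1}E(w)$ on $S_\rho$ via the identity $E_A(\tau,\varphi(\tau))=(1-\tau)^{A-2}E(v_\tau)$ --- these are equivalent up to the substitution $\eta=\frac{1+\varepsilon}{1-\varepsilon}$.
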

\begin{proof}
Fix $0<\varepsilon<1$. We write $ E_A(\tau,\varphi(\tau))$ as
\begin{equation*}
 \begin{aligned}
        E_A(\tau,\varphi(\tau)) &= \frac{\varepsilon(1-\tau)^A}{2}\|\nabla_\xi \varphi(\tau)\|_{2}^2 +\frac{\varepsilon(1-\tau)^{A-\delta(q)}}{q+1} \|\varphi(\tau)\|_{q+1}^{q+1} \\
       & +\frac{\varepsilon(1-\tau)^{A-\delta(p)}}{p+1} \|\varphi(\tau)\|_{p+1}^{p+1}+E_A^{\star}(\tau,\varphi(\tau)), 
    \end{aligned}
\end{equation*}
where
\begin{equation*}
 \begin{aligned}
        E_A^{\star}(\tau,\varphi(\tau)) &= \frac{(1-\varepsilon)(1-\tau)^A}{2}\|\nabla_\xi \varphi(\tau)\|_{2}^2\\
       & +\frac{(1-\varepsilon)(1-\tau)^{A-\delta(q)}}{q+1} \|\varphi(\tau)\|_{q+1}^{q+1} \\
&        -\frac{(1+\varepsilon)(1-\tau)^{A-\delta(p)}}{p+1} \|\varphi(\tau)\|_{p+1}^{p+1}.
    \end{aligned}
\end{equation*}
Our purpose is to show that for a suitable choice of $\varepsilon$ with $\varepsilon>\varepsilon_0$, for $\varepsilon_0$ independent of $A$,  $E_A^{\star}(\tau,\varphi(\tau))$ is strictly positive for all $\tau\in (0,1)$. We argue as before by considering
\begin{equation}\label{NLS10bis2}
    \begin{aligned}
      \tilde E_A^{\star}(\tau,\varphi(\tau))&:=  (1-\tau)^{-A}E_A^{\star}(\tau,\varphi(\tau)) \\
      &=\frac{1-\varepsilon}{2}\|\nabla_\xi \varphi(\tau)\|_{2}^2\\
        & + (1-\varepsilon) \frac{(1-\tau)^{-\delta(q)}}{q+1} \|\varphi(\tau)\|_{q+1}^{q+1}\\
  &- (1+\varepsilon) \frac{(1-\tau)^{-\delta(p)}}{p+1} \|\varphi(\tau)\|_{p+1}^{p+1}.
    \end{aligned}
\end{equation}
 Define $\varphi(\tau,\xi)=\lambda^{d/2}v(\tau,\lambda\xi)$. We have 
\begin{equation*}\label{NLS10bis3}
    \begin{aligned}
      \tilde E_A^{\star}(\tau,\varphi(\tau))&: = \lambda^2 \left(\frac{1-\varepsilon}{2}\|\nabla_\xi v(\tau)\|_{2}^2\right.\\
        & + (1-\varepsilon) \frac{(1-\tau)^{-\delta(q)}\lambda^{-\delta(q)}}{q+1} \|v(\tau)\|_{q+1}^{q+1}\\
  &- \left.(1+\varepsilon) \frac{(1-\tau)^{-\delta(p)}\lambda^{-\delta(p)}}{p+1} \|v(\tau)\|_{p+1}^{p+1}\right).
    \end{aligned}
\end{equation*}
We impose that $ (1-\tau)^{-\delta(p)}\lambda^{\frac{d}{2}(p-1)-2}=1$, namely $\lambda=(1-\tau)^{-1}$ and we get
\[
\begin{aligned}
  \tilde E_A^{\star}(\tau,\varphi(\tau))&=(1-\tau)^{-2}\left( \frac{1-\varepsilon}{2}\|\nabla_\xi v(\tau)\|_{2}^2 + \frac{(1-\varepsilon)}{q+1}  \|v(\tau)\|_{q+1}^{q+1}\right.\\
  &\left.\qquad \qquad \qquad - \frac{(1+\varepsilon)}{p+1} \|v(\tau)\|_{p+1}^{p+1}\right), 
\end{aligned}
\]
and then 
\[
\frac{(1-\tau)^{2}}{1-\varepsilon}\tilde E_A^{\star}(\tau,\varphi (\tau))=\left( \frac{1}{2}\|\nabla_\xi v(\tau)\|_{2}^2 + \frac{1}{q+1}  \|v(\tau)\|_{q+1}^{q+1}- \frac{(1+\varepsilon)}{(1-\varepsilon)(p+1)} \|v(\tau)\|_{p+1}^{p+1}\right).
\]
By Proposition \ref{monotonprop} there exists $\rho_2(\varepsilon)=\rho_0\left(\frac 12, \frac{1}{q+1}, \frac{(1+\varepsilon)}{(1-\varepsilon)(p+1)}\right)$ with $0<\rho_2(\varepsilon)<\rho_{0,E}$ such that 
$\tilde E_A^{\star}>0$ for all $\varphi$ with $\|\varphi\|_{L^{2}}\leq \rho_2(\varepsilon).$ By Proposition \ref{monotonprop2} we have 
$
\displaystyle\lim_{\varepsilon \to 0^+}\rho_2(\varepsilon)=\rho_{0,E}$,
and by the fact that 
\[
\rho_1(A)<\rho_0\left(\frac{1}{2},\frac{(1-\delta(q))(1-\delta(p))^{-\frac{\delta(q)}{\delta(p)}}}{q+1}, \frac{1}{p+1}\right)<\rho_{0,E}
\]
we can select $\varepsilon$ sufficiently small  independent of $A$ such that $\tilde E_A^{\star}(\tau,\varphi_{\lambda}(\tau))\geq 0$,
and hence  for all $\varphi_0$ with $\|\varphi_0\|_{L^{2}}\leq \rho_1(A)$
 we have \eqref{control-tau}.
\end{proof}

\begin{remark}
    Notice that if we consider the initial condition $\psi_0=u_{\rho_{0,E}}$,  with $u_{\rho_{0,E}}$  being a ground state with mass $\rho_{0,E}$, we have
    \begin{equation}\label{remei}
    \lim_{\tau \to 1^-} \tilde E_A^{\star}(\tau,\varphi(\tau))=-\infty
    \end{equation} 
    where $\tilde E_A^{\star}(\tau,\varphi(\tau))$ is given by \eqref{NLS10bis2}.
    We emphasize that this fact is crucial, because it shows that \eqref{control-tau} cannot hold for the time evolution of the ground state. To show \eqref{remei}
we just notice that, being $1-\tau=(1+t)^{-1}$,
\begin{equation}
    \begin{aligned}\tilde E_A^{\star}(\tau,\varphi(\tau))& =\frac{(1-\varepsilon)(1+t)^2}{2}\|u_{\rho_{0,E}}\|_2^2 +\frac{(1-\varepsilon)(1+t)^2}{q+1}\|u_{\rho_{0,E}}\|_{q+1}^{q+1}\\
    &-\frac{(1+\varepsilon)(1+t)^2}{p+1}\| u_{\rho_{0,E}}\|_{p+1}^{p+1}+\frac{(1-\varepsilon)(1+t)}{2}\langle x u_{\rho_{0,E}}, \nabla u_{\rho_{0,E}}\rangle\\
    &+\frac{1-\varepsilon}{8}\|xu_{\rho_{0,E}}\|_2^2
\end{aligned}
\end{equation}
and hence, by the fact that $
E(u_{\rho_{0,E}})=0$, 
\[
\begin{aligned}
\tilde E_A^{\star}(\tau,\varphi(\tau))&<-\frac{\varepsilon(1+t)^2}{2}\|\nabla u_{\rho_{0,E}} \|_2^2+\frac{(1-\varepsilon)(1+t)}{2}\langle x  u_{\rho_{0,E}}, \nabla  u_{\rho_{0,E}}\rangle\\
&+\frac{1-\varepsilon}{8}\|x  u_{\rho_{0,E}}\|_2^2\rightarrow -\infty \quad \hbox{ as } \quad   t\to+\infty.
\end{aligned}
\]
\end{remark}

At this point we can prove the main scattering result, by implementing the Tsutsumi and Yajima scheme. 

\subsection{Proof of Theorem \ref{mainthm-sca}} 
Let $A\in(\delta(q),1)$ and $\rho_1(A)$ be as given in Lemma \ref{remainderestimate}. Then \eqref{control-tau}
holds and 
\begin{equation*}
    \begin{aligned}
        \|\nabla_\xi \varphi(\tau)\|_{2}^2&\lesssim (1-\tau)^{-A},\\
\|\varphi(\tau)\|_{q+1}^{q+1}&\lesssim (1-\tau)^{\delta(q)-A},\\
\|\varphi(\tau)\|_{p+1}^{p+1}&\lesssim (1-\tau)^{\delta(p)-A}.
    \end{aligned}
\end{equation*}
By Proposition \ref{sca-equi}, we prove that $\varphi(\tau)$ as a strong limit in the $L^2$-topology as $\tau\to1^-$. Recall the embedding $L^{\frac{p+1}{p}}\subset H^{-1}\subset H^{-2}$, then by using \eqref{NLS4}-\eqref{NLS5}, \eqref{control-tau}, and the conservation of the mass for $\varphi(\tau)$, 
\[
\begin{aligned}
\|\partial_{\tau} \varphi(\tau)\|_{H^{-2}}&\leq \|\varphi(\tau)\|_{2}+C(1-\tau)^{-\delta(q)}\|\varphi(\tau)|\varphi(\tau)|^{q-1}\|_{H^{-2}}\\
&+C(1-\tau)^{-\delta(p)}\|\varphi(\tau)|\varphi(\tau)|^{p-1}\|_{H^{-2}}\\
&\leq \|\varphi(\tau)\|_{2}+C(1-\tau)^{-\delta(q)}\|\varphi(\tau)\|_{q+1}^{q}\\
&+C(1-\tau)^{-\delta(p)}\|\varphi(\tau)\|_{p+1}^{p}\\
&\leq \|\varphi_0\|_{2}+C(1-\tau)^{-\delta(q)+\frac{q}{q+1}(\delta(q)-A)}\\
&+C(1-\tau)^{-\delta(p)+\frac{p}{p+1}(\delta(p)-A)}.
\end{aligned}
\]
This means that $\partial_\tau \varphi $ belongs to $L^1((0,1);H^{-2})$  since $\delta(p)-\frac{p}{p+1}(\delta(p)-A)<1$ for any $p\in\left(1,1+\frac 4d\right)$, and this implies the existence of a function $\tilde \varphi\in H^{-2}$ such that $\varphi(\tau)\to \tilde\varphi$ in the $H^{-2}$-topology as $\tau\to1^-$. By using again the conservation of the mass for $\varphi(\tau)$, we actually have that $\tilde \varphi\in L^2$ and by the uniform bound in $L^2$, $\varphi(\tau)\rightharpoonup \tilde \varphi$, as $\tau\to1^-$. Let us consider $0\leq \tau'\leq \tau <1$. By using the fundamental theorem of calculus, and again the  equation solved by $\varphi(\tau)$, see \eqref{NLS4}-\eqref{NLS5}, for a test function $\phi$ we have, 
\[
\begin{aligned}
\langle\varphi(\tau)-\varphi(\tau'), \phi\rangle  & =\int_{\tau'}^\tau \langle i\nabla \varphi(t), \nabla\phi\rangle ds \\
&+\int_{\tau'}^\tau(1-s)^{-\delta(q)}\langle i |\varphi(s)|^{q-1}\varphi(s), \phi\rangle ds\\
&-\int_{\tau'}^\tau(1-s)^{-\delta(p)}\langle i |\varphi(s)|^{p-1}\varphi(s), \phi\rangle ds, 
\end{aligned}
\]
and then 
\[
\begin{aligned}
|\langle\varphi(\tau)-\varphi(\tau'), \phi\rangle|  & \leq \|\nabla \phi\|_{2}\int_{\tau'}^\tau \|\nabla \varphi(s)\|_{2}ds +\|\phi\|_{L^{q+1}}\int_{\tau'}^\tau(1-s)^{-\delta(q)}\|\varphi(s)\|_{q+1}^qds\\
&+\|\phi\|_{p+1}\int_{\tau'}^\tau(1-s)^{-\delta(p)}\|\varphi(s)\|_{p+1}^pds\\
&\leq \|\nabla \phi\|_{2}\int_{\tau'}^\tau (1-s)^{-A/2}ds +\|\phi\|_{q+1}\int_{\tau'}^\tau (1-s)^{-\delta(q)+\frac{q}{q+1}(\delta(q)-A)} ds\\
&+\|\phi\|_{p+1}\int_{\tau'}^\tau(1-s)^{-\delta(p)+\frac{p}{p+1}(\delta(p)-A)} ds.
\end{aligned}
\]
By the weak convergence in $L^2$, we have 
\begin{equation}\label{decay-tilde}
\begin{aligned}
|\langle\tilde \varphi-\varphi(\tau'), \phi\rangle|  
& \leq \|\nabla \phi\|_{2}\int_{\tau'}^1 (1-s)^{-A/2}ds +\|\phi\|_{q+1}\int_{\tau'}^1 (1-s)^{-\delta(q)+\frac{q}{q+1}(\delta(q)-A)} ds\\
&+\|\phi\|_{p+1}\int_{\tau'}^1(1-s)^{-\delta(p)+\frac{p}{p+1}(\delta(p)-A)} ds\\
&\lesssim (1-\tau')^{-\frac A2+1}\|\nabla \phi\|_{2}+(1-\tau')^{-\delta(q)+\frac{q}{q+1}(\delta(q)-A)+1}\|\phi\|_{q+1}\\
&+(1-\tau')^{-\delta(p)+\frac{p}{p+1}(\delta(p)-A)+1}\|\phi\|_{p+1}.
\end{aligned}
\end{equation}
At this point we set $\phi=\varphi(\tau')$ in \eqref{decay-tilde}, and we get, by using once more the controls given in \eqref{control-tau},
\[
\begin{aligned}
|\langle\tilde \varphi-\varphi(\tau'), \varphi(\tau')\rangle|&\lesssim (1-\tau')^{-A+1}+(1-\tau')^{-\delta(q)+\frac{q}{q+1}(\delta(q)-A)+1 +\frac{q}{q+1}(\delta(q)-A)}\\
&+(1-\tau')^{-\delta(p)+\frac{p}{p+1}(\delta(p)-A)+1+\frac{p}{p+1}(\delta(p)-A)},
\end{aligned}
\]
and the right hand side of the above estimate converges to zero as $\tau'\to 1^-$. Indeed, it goes to zero provided that both the exponent $-\delta(q)+\frac{q}{q+1}(\delta(q)-A)+1 +\frac{q}{q+1}(\delta(q)-A)$ and $-\delta(p)+\frac{p}{p+1}(\delta(p)-A)+1+\frac{p}{p+1}(\delta(p)-A)$ are positive. Let us consider the term in $q$, the other being similar. We have that the quantity
\[
 -\delta(q)+\frac{q}{q+1}(\delta(q)-A)+1 +\frac{q}{q+1}(\delta(q)-A)=-\delta(q)+\frac{2}{q+1}(\delta(q)-A)+1
\]
is strictly positive if and only if 
\begin{equation}\label{cond:Aqp}
    A<\frac{1-q}{2}\left(2-\frac d2(q-1)\right)+\frac{q+1}{2}.
\end{equation}
Note that the right-hand side of \eqref{cond:Aqp} above is larger  than 1 for $q>1+\frac2d$. Hence, since $A<1$, \eqref{cond:Aqp} is verified. 
The proof is concluded by noting that 
\[
\|\varphi(\tau)-\tilde \varphi\|_{2}^2=-\langle\tilde \varphi-\varphi(\tau),\varphi(\tau)\rangle+\langle\tilde \varphi-\varphi(\tau), \tilde \varphi\rangle,
\]
which tends to zero by the previous convergence properties. \\
In conclusion, we  proved $L^2$ scattering for any initial datum with mass smaller then $\rho_1(A)$, the latter being defined in \eqref{mass-rho1-A}. Now we pass to the limit when 
$A\rightarrow 1^-$, and by using the monotonicity of the threshold mass given by Lemma \ref{mono}, we get scattering below the mass given by  \eqref{eq:sup-rhoA}.

\appendix
\section{}\label{app}
\begin{prop}
Let $u\in H^1 $ be a standing  wave solution to
\begin{equation*}
- \Delta u + \omega u +|u|^{q-1}u-|u|^{p-1}u=0,
\end{equation*}
with $E(u)\leq 0$. Then $\omega>0$ and $u \in \Sigma.$
\end{prop}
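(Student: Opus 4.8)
The plan is to prove the two assertions in order: first $\omega>0$, by pairing the elliptic equation with $u$ and exploiting the hypothesis $E(u)\le 0$; then $u\in\Sigma$, by upgrading $u$ to a classical, exponentially decaying solution via elliptic regularity plus a comparison argument, exponential decay being far more than enough to place $|x|u$ in $L^2$. Throughout I assume $u\not\equiv 0$ (otherwise the statement is vacuous), which is the relevant case since ground states have mass $\rho\ge\rho_{0,E}>0$.

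\emph{Step 1: $\omega>0$.} Since $u\in H^1\hookrightarrow L^{q+1}\cap L^{p+1}$, the nonlinear terms lie in $H^{-1}$, so the equation may be tested against $\bar u$. Taking the real part gives the Nehari identity $\|\nabla u\|_2^2+\omega\|u\|_2^2+\|u\|_{q+1}^{q+1}-\|u\|_{p+1}^{p+1}=0$, that is, $\omega\|u\|_2^2=\|u\|_{p+1}^{p+1}-\|\nabla u\|_2^2-\|u\|_{q+1}^{q+1}$. On the other hand, the assumption $E(u)\le 0$ reads $\|u\|_{p+1}^{p+1}\ge\frac{p+1}{2}\|\nabla u\|_2^2+\frac{p+1}{q+1}\|u\|_{q+1}^{q+1}$. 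Plugging this lower bound into the previous line yields $\omega\|u\|_2^2\ge\frac{p-1}{2}\|\nabla u\|_2^2+\frac{p-q}{q+1}\|u\|_{q+1}^{q+1}$, whose right-hand side is strictly positive because $p>1$, $p>q$, and $\|\nabla u\|_2>0$ (a constant function in $L^2$ is trivial). Hence $\omega>0$.

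\emph{Step 2: regularity and decay at infinity.} Because $1<q<p<1+\frac4d$, both nonlinearities are $H^1$-subcritical ($p+1<2^\ast$ when $d\ge 3$), so I would invoke the standard Brezis--Kato / Moser bootstrap for the equation $-\Delta u=\omega(-u)+(|u|^{p-1}-|u|^{q-1})u$ (see e.g.\ Cazenave's monograph \cite{C03} or Berestycki--Lions), obtaining $u\in W^{2,r}(\R^d)$ for every $r\in[2,\infty)$, hence $u\in C^{1,\gamma}(\R^d)$ with $u(x)\to0$ as $|x|\to\infty$. Next, since $\omega>0$ and $u$ vanishes at infinity, pick $R>0$ with $\big||u(x)|^{q-1}-|u(x)|^{p-1}\big|\le\frac\omega2$ for $|x|\ge R$, so that the nonlinear right-hand side $g(u):=(|u|^{p-1}-|u|^{q-1})u$ obeys $|g(u)|\le\frac\omega2|u|$ there. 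Kato's inequality $\Delta|u|\ge\Rea\big(\tfrac{\bar u}{|u|}\Delta u\big)$ then gives, in the distributional sense on $\{|x|>R\}$, the differential inequality $-\Delta|u|+\tfrac\omega2|u|\le 0$. Comparing $|u|$ with the radial supersolution $\phi(x)=\big(\sup_{\partial B_R}|u|\big)\,e^{-\frac{\sqrt\omega}{2}(|x|-R)}$ of $-\Delta+\tfrac\omega2$ on the exterior domain — the exponent $\frac{\sqrt\omega}{2}$ being small enough to absorb the $\frac{d-1}{|x|}$ term — and using that both $|u|$ and $\phi$ tend to $0$ at infinity, the maximum principle on $\{|x|>R\}$ yields $|u(x)|\le C e^{-c|x|}$ with $c=\tfrac{\sqrt\omega}{2}>0$. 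Consequently $\||x|u\|_2^2\le C^2\int_{\R^d}|x|^2 e^{-2c|x|}\,dx<\infty$, so $u\in H^1$ with $|x|u\in L^2$, i.e.\ $u\in\Sigma$.

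The only delicate point is the exponential-decay step: one must run the comparison/maximum-principle argument on an unbounded exterior domain for a complex-valued $u$, which means handling the zero set of $u$ through Kato's inequality and choosing the barrier exponent so that $-\Delta\phi+\frac\omega2\phi\ge 0$ despite the first-order radial term. This is classical, but it is where the care lies; Steps 1 and the regularity bootstrap are routine.
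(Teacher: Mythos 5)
Your proof is correct, and it differs from the paper's in a way worth noting. For $\omega>0$, the paper solves the $3\times 3$ linear system formed by the Nehari identity, the energy identity, and the Pohozaev identity $G(u)=0$, arriving at $-\tfrac2d\|\nabla u\|_2^2+2E(u)+\omega\|u\|_2^2=0$ and hence $\omega\|u\|_2^2\ge\tfrac2d\|\nabla u\|_2^2>0$. You instead use only the Nehari identity combined with the inequality $E(u)\le0$, obtaining $\omega\|u\|_2^2\ge\tfrac{p-1}{2}\|\nabla u\|_2^2+\tfrac{p-q}{q+1}\|u\|_{q+1}^{q+1}>0$; the algebra checks out, and avoiding the Pohozaev identity is a small but genuine simplification, since that identity requires its own (regularity-based) justification before it can be invoked, whereas testing the equation against $\bar u$ is immediate for $H^1$ solutions with $H^1$-subcritical nonlinearities. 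For membership in $\Sigma$, the paper simply quotes the classical exponential decay of Berestycki--Lions once $\omega>0$ is known, while you reconstruct that decay via elliptic bootstrap, Kato's inequality, and an exterior-domain comparison with the barrier $\phi=Ce^{-\frac{\sqrt\omega}{2}(|x|-R)}$; your barrier computation is right ($-\Delta\phi+\tfrac\omega2\phi\ge(\tfrac\omega2-\tfrac\omega4)\phi>0$, the first-order radial term having the favourable sign), so your argument is a self-contained version of what the paper cites. One cosmetic remark: the paper states the decay conclusion for ground states, but both your argument and the cited one apply to any $H^1$ standing wave with $\omega>0$, so nothing is lost.
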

\begin{proof}
Let us define $K=\|\nabla u\|_{2}^2$, $ N_q= \|u\|^{q+1}_{q+1}$, $ N_p=\|u\|^{p+1}_{p+1}$, and $ M=\|u\|^{2}_{2}$. Hence the standing wave solve the following system of equations
\begin{equation}\label{systemstand}
\begin{cases}
   K+N_q-N_p+\omega M =0 \\
 \frac 12 K+\frac{1}{q+1}N_q-\frac{1}{p+1}N_p =E\\
 K+\frac{d}{2}\frac{(q-1)}{q+1}N_q-\frac{d}{2}\frac{(p-1)}{p+1}N_p=0.
\end{cases}
\end{equation}
From \eqref{systemstand} we derive
\begin{equation}\label{systemstand2}
\begin{cases}
  N_p=\frac{2(p+1)}{d(p-1)}K+\frac{(q-1)(p+1)}{(q+1)(p-1)}N_q \\
\frac{d(p-1)-4}{2d(p-1)}K+\frac{p-q}{(q+1)(p-1)}N_q=E
\end{cases},
\end{equation}
while from \eqref{systemstand2} we get 
\begin{equation}\label{systemstand3}
 N_q=\frac{(q+1)(p-1)}{p-q}E-\frac{(d(p-1)-4)(q+1)}{2d(p-q)}K 
\end{equation}
and hence we have
\begin{equation*}\label{systemstand4}
   K+N_q -\left(\frac{2(p+1)}{d(p-1)}K+\frac{(q-1)(p+1)}{(q+1)(p-1)}N_q\right)+\omega M=0,
\end{equation*}
which implies
\begin{equation*}\label{systemstand5}
   \left(\frac{d(p-1)-2p-2}{d(p-1)}\right)K+N_q\left(\frac{2(p-q)}{(q+1)(p-1)}\right)+\omega M=0.
\end{equation*}
From \eqref{systemstand3}
\begin{equation*}\label{systemstand6}
   -\frac{2}{d}K+2E+\omega M=0,
\end{equation*}
which implies $\omega>0$ if $E(u)\leq 0.$ If we consider the ground state, once the positivity of $\omega$ is shown,
the exponential decay is well-known, see \cite{BL83},
and hence the ground state belongs to $\Sigma$.
\end{proof}
We claim the following homogeneity properties of the threshold masses.
\begin{lemma}
    The function $\rho_0\left(\alpha,\beta,\gamma\right)$ is homogeneous of order $0$ and satisfies
    \begin{equation}\label{ide-abc}
\rho_0\left(\alpha,\beta,\gamma\right)=\rho_0\left(1,1, \Lambda(\alpha,\beta,\gamma)\right)
\end{equation}
where
\begin{equation}
   \Lambda(\alpha,\beta,\gamma) = \frac{\gamma}{\alpha^{1-\frac{\delta(p)}{\delta(q)}} \beta^{\frac{\delta(p)}{\delta(q)}}}. 
\end{equation}
\end{lemma}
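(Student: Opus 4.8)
The plan is to derive both claims from two elementary invariances of the constrained infimum $\mathrm{I}_{\rho^2}^{\alpha,\beta,\gamma}=\inf_{S_\rho}E^{\alpha,\beta,\gamma}$, combined with the characterization of $\rho_0(\alpha,\beta,\gamma)$ given in Theorem \ref{mainthm}: it is the unique threshold with $\mathrm{I}_{\rho^2}^{\alpha,\beta,\gamma}=0$ for $\rho\le\rho_0$ and $\mathrm{I}_{\rho^2}^{\alpha,\beta,\gamma}<0$ for $\rho>\rho_0$. The first invariance is \emph{multiplicative}: since $E^{t\alpha,t\beta,t\gamma}=t\,E^{\alpha,\beta,\gamma}$ for $t>0$, we get $\mathrm{I}_{\rho^2}^{t\alpha,t\beta,t\gamma}=t\,\mathrm{I}_{\rho^2}^{\alpha,\beta,\gamma}$ for every $\rho>0$, and multiplication by $t>0$ preserves the sign of $\rho\mapsto\mathrm{I}_{\rho^2}$; by the dichotomy in Theorem \ref{mainthm} this yields $\rho_0(t\alpha,t\beta,t\gamma)=\rho_0(\alpha,\beta,\gamma)$, which is the $0$-homogeneity. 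The second invariance is \emph{dilation}: for $\lambda>0$ the mass-preserving rescaling $u\mapsto u_\lambda:=\lambda^{d/2}u(\lambda\cdot)$ is a bijection of $S_\rho$ onto itself, and since $\frac{d}{2}(q-1)=2-\delta(q)$ and $\frac{d}{2}(p-1)=2-\delta(p)$ by \eqref{def:alphas}, one computes $E^{\alpha,\beta,\gamma}(u_\lambda)=E^{\alpha\lambda^2,\,\beta\lambda^{2-\delta(q)},\,\gamma\lambda^{2-\delta(p)}}(u)$. Taking $\inf_{S_\rho}$ gives $\mathrm{I}_{\rho^2}^{\alpha,\beta,\gamma}=\mathrm{I}_{\rho^2}^{\alpha\lambda^2,\,\beta\lambda^{2-\delta(q)},\,\gamma\lambda^{2-\delta(p)}}$ for all $\rho,\lambda>0$, hence $\rho_0(\alpha,\beta,\gamma)=\rho_0(\alpha\lambda^2,\beta\lambda^{2-\delta(q)},\gamma\lambda^{2-\delta(p)})$ by Theorem \ref{mainthm} again.

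To conclude I would chain the two invariances with a concrete choice of parameters. First pick $\lambda$ so that the ratio of the first two rescaled coefficients equals $1$, i.e.\ $\lambda^{\delta(q)}=\beta/\alpha$, which is solvable because $\delta(q)\ne 0$; then pick $t=(\alpha\lambda^2)^{-1}$ to normalize the first coefficient. With these choices the first two coefficients become $t\alpha\lambda^2=1$ and $t\beta\lambda^{2-\delta(q)}=(\beta/\alpha)\lambda^{-\delta(q)}=1$, while the third becomes
\[
t\gamma\lambda^{2-\delta(p)}=\frac{\gamma}{\alpha}\,\lambda^{-\delta(p)}=\frac{\gamma}{\alpha}\Big(\frac{\beta}{\alpha}\Big)^{-\delta(p)/\delta(q)}=\frac{\gamma}{\alpha^{1-\frac{\delta(p)}{\delta(q)}}\beta^{\frac{\delta(p)}{\delta(q)}}}=\Lambda(\alpha,\beta,\gamma).
\]
Applying the dilation invariance with this $\lambda$ and then the multiplicative invariance with this $t$ yields $\rho_0(\alpha,\beta,\gamma)=\rho_0\big(1,1,\Lambda(\alpha,\beta,\gamma)\big)$, which is \eqref{ide-abc}.

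The argument uses no compactness or variational input beyond Theorem \ref{mainthm}; the only genuine point is that the two-parameter family generated by spatial dilation and overall multiplication of $E$ can normalize the first two coefficients to $1$ simultaneously (this is where $\delta(q)>0$ enters, to solve for $\lambda$), together with keeping track of the exponents so that the last coefficient comes out exactly as the stated $\Lambda$. I expect this bookkeeping to be the only place where an error could creep in, but there is no substantial obstacle.
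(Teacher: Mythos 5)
Your proposal is correct and is essentially the paper's own argument: the paper applies the mass-preserving dilation $u_\lambda=\lambda^{d/2}u(\lambda\cdot)$ with exactly your choice $\lambda=(\beta/\alpha)^{1/\delta(q)}$ and then factors out the positive constant $\alpha\lambda^2$, which is precisely your multiplicative step with $t=(\alpha\lambda^2)^{-1}$. Your bookkeeping of the exponents and the reduction to the sign dichotomy of Theorem \ref{mainthm} are both accurate.
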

\begin{proof}
   Let us consider  $E^{\alpha,\beta,\gamma}(u)$ defined in \eqref{ener-general}. Then, for $u_{\lambda}=\lambda^{d/2}u(\lambda x)$ and $\lambda=\left(\frac{\beta}{\alpha}\right)^{\frac{1}{\delta(q)}}$, we get 
\[
E^{\alpha,\beta,\gamma}(u_{\lambda})=\alpha\lambda^2\left(\|\nabla u\|_{2}^2+\| u\|_{q+1}^{q+1}-\frac{\gamma}{\alpha^{1-\frac{\delta(p)}{\delta(q)}} \beta^{\frac{\delta(p)}{\delta(q)}}}\| u\|_{p+1}^{p+1}\right).
\]
The above identity implies \eqref{ide-abc} by definition. 
\end{proof}
As a by-product we can give the order of the threshold masses. 
\begin{prop}
The masses $\rho^{\star}$, $\rho_{0,SW}$, and $\rho_{0,E}$ defined in \eqref{defenergsw}, in Theorem \ref{mainthm-sca},  in Theorem \ref{mainthmforE}, respectively, are ordered as follows:
\[\rho^{\star}<\rho_{0, SW}<\rho_{0,E}.\]
\end{prop}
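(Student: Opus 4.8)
The plan is to reduce the whole statement to a comparison of three scalars by means of the homogeneity identity \eqref{ide-abc}. Recall from Proposition \ref{monotonprop} that $\gamma\mapsto\rho_0(1,1,\gamma)$ is strictly decreasing; combined with \eqref{ide-abc} this shows that, for admissible triples, $\rho_0(\alpha_1,\beta_1,\gamma_1)<\rho_0(\alpha_2,\beta_2,\gamma_2)$ if and only if $\Lambda(\alpha_1,\beta_1,\gamma_1)>\Lambda(\alpha_2,\beta_2,\gamma_2)$. Hence it suffices to compute $\Lambda^\star$, $\Lambda_{SW}$, $\Lambda_E$ (the values of $\Lambda$ at the triples defining $\rho^\star$, $\rho_{0,SW}$, $\rho_{0,E}$, respectively) and to prove $\Lambda^\star>\Lambda_{SW}>\Lambda_E$. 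To streamline the algebra I would set $x:=\delta(q)$, $y:=\delta(p)$, so that $0<y<x<1$ in the range $1+\frac{2}{d}<q<p<1+\frac{4}{d}$ (the inequality $y<x$ being precisely $q<p$), put $t:=\delta(p)/\delta(q)=y/x\in(0,1)$, and record the identities $tx=y$, $\frac{d(q-1)}{2}=2-x$, $\frac{d(p-1)}{2}=2-y$.

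A direct substitution into the formula for $\Lambda$ then gives
\[
\Lambda_E=\frac{2^{1-t}(q+1)^{t}}{p+1},\qquad
\Lambda_{SW}=\frac{(2-y)(q+1)^{t}}{(p+1)(2-x)^{t}},\qquad
\Lambda^\star=\frac{(1-y)\,2^{1-t}(q+1)^{t}}{(p+1)(1-x)^{t}},
\]
the only non-cosmetic point being that $\big((1-y)^{-x/y}\big)^{t}=(1-y)^{-1}$ in the computation of $\Lambda^\star$. Consequently $\rho^\star<\rho_{0,SW}<\rho_{0,E}$ is equivalent to the two scalar inequalities
\[
2-y>2^{1-t}(2-x)^{t}
\qquad\text{and}\qquad
2^{1-t}\big(\tfrac{2-x}{1-x}\big)^{t}>\tfrac{2-y}{1-y}.
\]

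For the first, I would invoke strict weighted AM--GM: since $t\in(0,1)$ and $2-x\neq 2$,
\[
2^{1-t}(2-x)^{t}<(1-t)\cdot 2+t\cdot(2-x)=2-tx=2-y,
\]
equivalently, strict concavity of $s\mapsto\ln(2-s)$ evaluated at $y=(1-t)\cdot 0+t\cdot x$. For the second, I would take logarithms and use the strict convexity on $(-\infty,1)$ of $\phi(s):=\ln\frac{2-s}{1-s}$; indeed $\phi'(s)=\frac{1}{(1-s)(2-s)}$ is strictly increasing there, so, writing again $y=(1-t)\cdot 0+t\cdot x$ and using $\phi(0)=\ln 2$,
\[
\phi(y)<(1-t)\phi(0)+t\,\phi(x)=(1-t)\ln 2+t\,\ln\tfrac{2-x}{1-x},
\]
which exponentiates to exactly $\frac{2-y}{1-y}<2^{1-t}\big(\frac{2-x}{1-x}\big)^{t}$. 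Combining the two inequalities with the monotonicity of $\rho_0(1,1,\cdot)$ yields $\Lambda^\star>\Lambda_{SW}>\Lambda_E$, hence $\rho^\star<\rho_{0,SW}<\rho_{0,E}$; as a by-product one recovers the inequality $\rho^\star<\rho_{0,E}$ asserted in Theorem \ref{mainthm-sca}.

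The computations involved are routine, so there is no real obstacle; the single conceptual ingredient is the observation $\delta(p)=t\,\delta(q)$ with $t\in(0,1)$, i.e.\ that $\delta(p)$ is a genuine convex combination $(1-t)\cdot 0+t\,\delta(q)$, which is exactly what converts both inequalities into one-variable Jensen statements. The only point deserving attention is the strictness $t<1$, equivalent to $\delta(p)<\delta(q)$ and hence to $q<p$; once that is granted, strictness in AM--GM and in the convexity of $\phi$ is automatic.
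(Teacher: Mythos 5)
Your proposal is correct. The overall reduction is the same as the paper's: you invoke the homogeneity identity \eqref{ide-abc} together with the monotonicity of $\rho_0$ in $\gamma$ from Proposition \ref{monotonprop}, and after unwinding the algebra the two scalar inequalities you prove, namely $2-y>2^{1-t}(2-x)^{t}$ and $2^{1-t}\bigl(\tfrac{2-x}{1-x}\bigr)^{t}>\tfrac{2-y}{1-y}$, are exactly equivalent to the chain \eqref{tri-ineq}--\eqref{tri-ineq-bis} that the paper establishes (your computed values of $\Lambda^\star$, $\Lambda_{SW}$, $\Lambda_E$ all check out, including the key cancellation $\bigl((1-y)^{-x/y}\bigr)^{t}=(1-y)^{-1}$). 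The only genuine difference is the endgame: the paper packages both inequalities into the single one-parameter family $F(s)=\bigl(1-\delta(q)/s\bigr)^{-\delta(p)/\delta(q)}\bigl(1-\delta(p)/s\bigr)$ and verifies $F'<0$ on $[1,\infty)$, so the chain reads $F(1)>F(2)>\lim_{s\to+\infty}F(s)=1$, whereas you prove the two inequalities separately as Jensen statements at the convex combination $\delta(p)=(1-t)\cdot 0+t\,\delta(q)$ (weighted AM--GM, i.e.\ concavity of $\ln(2-s)$, for one; convexity of $\ln\tfrac{2-s}{1-s}$ on $(-\infty,1)$ for the other). Both arguments are elementary and of comparable length; the paper's has the aesthetic advantage of a single unified monotonicity computation, while yours makes transparent why the inequalities hold (and why they are strict: $t<1$, equivalently $q<p$). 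No gaps.
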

\begin{proof}
Note that 
\[
     \rho^{\star}=\rho_0\left(\frac{1}{2}, \frac{(1-\delta(q))(1-\delta(p))^{-\frac{\delta(q)}{\delta(p)}}}{q+1}, \frac{1}{p+1}\right)
    =\rho_0\left(\alpha_1, \beta_1, \gamma_1\right) 
    \]
 with $(\alpha_1, \beta_1, \gamma_1)=\left(\frac12, \frac{(1-\delta(q))(1-\delta(p))^{-\frac{\delta(q)}{\delta(p)}}}{q+1},\frac{1}{p+1}\right)$,
\[
 \rho_{0, SW}=\rho_0\left(1, \frac{d(q-1)}{2(q+1)}, \frac{d(p-1)}{2(p+1)}\right) =\rho_0\left(\alpha_2, \beta_2, \gamma_2\right)
 \]
 with $(\alpha_2, \beta_2,\gamma_2)=\left(1, \frac{d(q-1)}{2(q+1)},\frac{d(p-1)}{2(p+1)}\right)$, and  
\[ 
 \rho_{0, E}=\rho_0\left(\frac 12, \frac{1}{q+1}, \frac{1}{p+1}\right)=\rho_0\left(\alpha_3,\beta_3,\gamma_3\right) 
 \]
with $(\alpha_3, \beta_3,\gamma_3)=\left(\frac12, \frac{1}{q+1},\frac{1}{p+1}\right)$. By  \eqref{ide-abc} and the monotonicity property of Proposition \ref{monotonprop}, we aim at proving that $\Lambda(\alpha_1,\beta_1,\gamma_1)>\Lambda(\alpha_2,\beta_2,\gamma_2)>\Lambda(\alpha_3,\beta_3,\gamma_3)$. 
The latter is equivalent to prove that 
\begin{equation}\label{tri-ineq}
    (1-\delta(q))^{-\frac{\delta(p)}{\delta(q)}}(1-\delta(p))> \frac{d(p-1)}{4}\left(\frac{d(q-1)}{4}\right)^{-\frac{\delta(p)}{\delta(q)}}>1.
\end{equation}
By definition, $\delta(p)=2-\frac{d(p-1)}{2}$ and $\delta(q)=2-\frac{d(q-1)}{2}$, thus \eqref{tri-ineq} becomes
\begin{equation}\label{tri-ineq-bis}
    (1-\delta(q))^{-\frac{\delta(p)}{\delta(q)}}(1-\delta(p))>  \left(1-\frac{\delta(q)}{2}\right)^{-\frac{\delta(p)}{\delta(q)}}\left(1-\frac{\delta(p)}{2}\right)>1.
\end{equation}
We introduce the function
\[
F(x)=\left(1-\frac{\delta(q)}{x}\right)^{-\frac{\delta(p)}{\delta(q)}}\left(1-\frac{\delta(p)}{x}\right), \qquad x \in [1, \infty).
\]
Note that \eqref{tri-ineq-bis} is equivalent to 
\[
F(1)>F(2)>\lim_{x\to +\infty}F(x),
\]
and the proof is concluded provided that $F$   is monotone decreasing. Direct computations give
\[
F'(x)=\frac{\delta(p)}{x^3}\left(1-\frac{\delta(q)}{x}\right)^{-\frac{\delta(p)}{\delta(q)}-1}(\delta(p)-\delta(q))<0,
\]
by recalling that  $\delta(p)<\delta(q)$. The proof is concluded. 
\end{proof}

\subsection*{Acknowledgements}

JB, LF, and VG were supported in part by the GNAMPA
  (Gruppo Nazionale per l'Analisi Matematica) of the INdAM.  VG was supported in part by the Institute of Mathematics and Informatics, Bulgarian Academy of Sciences and by Top Global University Project, Waseda University.

\bibliographystyle{plain}
\bibliography{scattering_mass_sub}

\end{document}